\newtheorem{theorem}{Theorem}[section]
\newtheorem*{theorem*}{Theorem}
\newtheorem*{lemma*}{Lemma}
\newtheorem*{proposition*}{Proposition}
\newtheorem*{corollary*}{Corollary}
\newtheorem{lemma}[theorem]{Lemma}
\newtheorem{defn}[theorem]{Definition}
\newtheorem{prop}[theorem]{Proposition}
\newtheorem{cor}[theorem]{Corollary}
\theoremstyle{definition}
\newtheorem{remark}[theorem]{Remark}
\def\dl{\delta}
\def\ep{\epsilon}
\def\sse{\subseteq}
\def\bt{\beta}
\def\al{\alpha}
\def\pa{\partial}
\def\map{\rightarrow}
\def\isom{\cong}
\newcommand\RED{\textcolor{red}}
\def\lgl{\langle}
\def\rgl{\rangle}
\def\smallskip{\vspace\smallskipamount}
\def\medskip{\vspace\medskipamount}
\def\bigskip{\vspace\bigskipamount}
\def\A{\mathcal{A}}
\def\R{\mathbb {R}}
\def\N{\mathbb {N}}
\def\Z{\mathbb {Z}}
\begin{document}
\title[Weakly malnormal quasiconvex subgroups]{On the existence of weakly malnormal quasiconvex subgroups of hyperbolic groups}
\author{Rakesh Halder and Pranab Sardar}

\address{School of Mathematics, Tata Institute of Fundamental Research (TIFR) Mumbai, Dr. Homi Bhaba Road, Colaba, Mumbai, Maharastra 400005, India}
\email{rhalder.math@gmail.com}

\address{Indian Institute of Science Education and Research (IISER) Mohali,Knowledge City,  Sector 81, S.A.S. Nagar, Punjab 140306, India}
\email{psardar@iisermohali.ac.in}


	\subjclass[2020]{20F65, 20F67}
	\keywords{Hyperbolic metric spaces (groups), (weakly) malnormal subgroups, commensurated subgroup.}

	\maketitle
\begin{abstract}In this short note, we prove the existence of weakly malnormal, virtually free, quasiconvex
subgroups in any nonelementary hyperbolic group. This extends a result of Ilya Kapovich
appearing in \cite{kapovich-nonqc}, where he proved the existence of malnormal quasiconvex subgroups
in torsion-free hyperbolic groups.
\end{abstract}

\setcounter{secnumdepth}{3}  
\setcounter{tocdepth}{3}     

\tableofcontents

\section{Introduction}\label{introduction}

Hyperbolic groups, introduced by M. Gromov in the $1980$s (\cite{gromov-hypgps}), are one of the widely
studied objects in Geometric Group Theory. They are obtained as generalization to geometrically finite
Kleinian groups acting on classical hyperbolic $n$-spaces. Quasiconvex subgroups of hyperbolic groups
are subgroups which themselves behave in the same way, i.e. geometrically finite or convex cocompact.
It is well known that a subgroup $H$ of a hyperbolic group $G$ is quasiconvex if and only if it is undistorted
or quasiisometrically embedded in $G$ (\cite[Lemma $3.6$, III.H]{bridson-haefliger}). In any hyperbolic groups there are plenty of free quasiconvex
subgroups. See for instance, \cite[$5.3.E$]{gromov-hypgps}, \cite[Lemma $3.20$, III.H]{bridson-haefliger}. However, results demonstrating constructions of `interesting' examples of quasiconvex
or nonquasiconvex subgroups of hyperbolic groups are very limited. In \cite{kapovich-nonqc}
Ilya Kapovich proved a nonquasiconvex embedding theorem. However, in the proof of the theorem in that
paper the following auxiliary result was established.

\begin{theorem}\textup{(\cite[Theorem $6.7$]{kapovich-nonqc})}
Suppose $G$ is a torsion free, nonelementary hyperbolic group. Then, in $G$ there is a malnormal
quasiconvex subgroup $H$ which is a free group of rank at least $2$.
\end{theorem}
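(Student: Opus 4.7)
My plan is to combine a ping-pong construction on the Gromov boundary $\partial G$ with the limit-set criterion for malnormality in torsion-free hyperbolic groups. First I reduce malnormality to a boundary statement. Since $G$ is nonelementary, I pick two independent hyperbolic elements $a, b \in G$; their fixed-point pairs $\{a^\pm\}, \{b^\pm\} \subset \partial G$ are four distinct points, and after replacing $a, b$ by maximal roots (possible because $G$ is torsion-free) neither is a proper power, so the maximal elementary subgroups $E(a) = \langle a \rangle$ and $E(b) = \langle b \rangle$ are infinite cyclic and malnormal in $G$. For a quasiconvex subgroup $H \leq G$ and $g \in G$, Short's lemma yields that $H \cap gHg^{-1}$ is quasiconvex, and the theorem of Gitik--Mitra--Rips--Sageev identifies its limit set as $\Lambda H \cap g \Lambda H$; since $G$ has no nontrivial finite subgroups, a quasiconvex subgroup with empty limit set is trivial. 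Consequently, $H$ is malnormal if and only if $\Lambda H \cap g\Lambda H = \emptyset$ for every $g \in G \setminus H$, which is the condition I aim to verify.

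For $N$ large, a standard ping-pong argument on $\partial G$ using disjoint small neighborhoods of $\{a^\pm, b^\pm\}$ shows that $\langle a^N, b^N \rangle$ is free of rank $2$ and quasiconvex. However this naive subgroup is never malnormal: $a \cdot a^N \cdot a^{-1} = a^N$ lies in $\langle a^N, b^N\rangle \cap a \langle a^N, b^N\rangle a^{-1}$ while $a$ itself does not. I therefore take $H := \langle w_1, w_2 \rangle$ for long, alternating words such as $w_1 = a^N b^N$ and $w_2 = a^N b^{2N}$, with exponents to be enlarged generically as needed. The same ping-pong argument shows $H$ is free of rank $2$ and quasiconvex, and by the alternating structure $H$ contains no nontrivial pure power of $a$ or $b$; in particular the hyperbolic elements of $H$ have axes distinct from those of $a$ and $b$.

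Now suppose $g \in G \setminus H$ gives a nontrivial intersection $H \cap gHg^{-1}$. Picking a hyperbolic $h$ in this intersection and writing $h = gh'g^{-1}$ with $h' \in H$, I obtain hyperbolic $h, h' \in H$ with $g\{h'^\pm\} = \{h^\pm\}$ in $\partial G$. Choosing the $w_i$ so that no hyperbolic element of $H$ is a proper power in $G$ (again a genericity condition), malnormality of the cyclic subgroup $\langle h'\rangle = E(h')$ in $G$ then pins $g$ down modulo $\langle h'\rangle$ by any conjugator realizing the element-conjugation $h' \mapsto h$. This reduces the problem to asking, for each $H$-conjugacy class of hyperbolic elements of $H$, whether it is the full intersection with $H$ of its $G$-conjugacy class --- equivalently, whether there is an ``outer'' $G$-conjugator between two hyperbolic elements of $H$. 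The main obstacle is precisely this: converting the \emph{a priori} countable condition over $g \in G \setminus H$ into a finite combinatorial check (using the finitely many $H$-conjugacy classes of short hyperbolic elements and the proper discontinuity of the $G$-action on pairs of distinct boundary points), and then ruling out each remaining case by a generic adjustment of the exponents in $w_1, w_2$. Making these two things work together while maintaining the ping-pong/quasiconvexity estimates is the heart of the argument.
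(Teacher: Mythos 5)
Your opening reductions are fine: for a torsion-free hyperbolic $G$, a quasiconvex subgroup $H$ is malnormal if and only if $\Lambda(H)\cap g\Lambda(H)=\emptyset$ for all $g\in G\setminus H$ (via Short's intersection lemma and the limit-set intersection property of \cite{GMRS}, plus torsion-freeness to promote ``finite intersection'' to ``trivial intersection''), and ping-pong does produce a quasiconvex rank-$2$ free subgroup $\langle w_1,w_2\rangle$; your observation that $\langle a^N,b^N\rangle$ itself is never malnormal is also correct. But the proof stops exactly where the content of the theorem begins, and you say so yourself: ``converting the a priori countable condition over $g\in G\setminus H$ into a finite combinatorial check \ldots and then ruling out each remaining case by a generic adjustment of the exponents'' is not a step you carry out --- it \emph{is} the theorem. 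Nothing in your text shows that any choice of exponents in $w_1=a^Nb^N$, $w_2=a^Nb^{2N}$ excludes all outer conjugations: for a single pair of hyperbolic elements $h,h'\in H$ with $h=gh'g^{-1}$, the admissible conjugators $g$ form an entire coset of the infinite cyclic centralizer of $h'$, there are infinitely many $H$-conjugacy classes of pairs $(h,h')$ to control, and the ``genericity'' you invoke is never formulated as a checkable condition, let alone checked. Likewise the auxiliary claims you need along the way (no nontrivial element of $H$ is a proper power in $G$; the finiteness reduction via proper discontinuity on pairs of boundary points) are asserted, not proved. This verification is precisely what occupies the bulk of Kapovich's argument (\cite[Section 5]{kapovich-nonqc}, culminating in his Theorem 5.16 and then Theorem 6.7), where the passage from infinitely many conjugators to a finite check uses quasiconvexity and double-coset finiteness, and the remaining conjugation equations are killed by a careful, explicit choice of generating words; an alternative is to produce malnormal subgroups of a free group by Wise's graded small-cancellation theory \cite{wise-mal-1,wise-mal-2} and then argue that malnormality persists in $G$. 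Without one of these ingredients your construction yields a quasiconvex free subgroup, not a malnormal one.

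For context: the paper you are being measured against does not reprove this statement either --- it imports it as a black box (Theorem \ref{malnormal in free}) and uses it in Step 2 of the proof of Theorem \ref{main malnormal thm}. Its own machinery (commensurators, coned-off graphs, Propositions \ref{pingpong criteria} and \ref{pingpong prop}) is built to handle torsion and weak malnormality, and it deliberately routes around the hard free-group malnormality step that your proposal leaves open.
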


However, although the corresponding result for general hyperbolic groups, i.e. those which need not be
torsion free is missing in the literature. We believe that such a result is known to many experts
but no written account of the same was found. Hence, in this paper, we prove the following
result in this connection.\smallskip

{\bf Theorem A} \textup{(Theorem \ref{main malnormal thm})}\label{thm-main-intro} {\em
Let $G$ be a nonelementary hyperbolic group. Then there is a  weakly malnormal and quasiconvex subgroup of the form $Comm_G(H_1)=H_1\times A$ in $G$ where $H_1$ is a free subgroup of rank at least $2$ and $A$ is a finite subgroup. 
}\smallskip




As an application of the above theorem one immediately gets the following result
using the Bestvina--Feighn combination theorem (\cite[Theorem $1.2$]{BF-Adn}). This result generalizes the main result of \cite{kapovich-nonqc}.\smallskip

{\bf Theorem B} \textup{(Theorem \ref{main application})}
{\em Suppose $G_1$ is any nonelementary hyperbolic group. Then there is a hyperbolic group $G$
and an injective homomorphism $\phi: G_1\map G$ such that $\phi(G_1)$ is not quasiconvex
in $G$.}

\smallskip

{\bf Remark}: In our proof of Theorem A, a substantial part of the proof idea is borrowed from \cite{kapovich-nonqc}. Both approaches share the following key observation: given a free group $\mathbb F_n$ of rank $n\ge2$, and a finite collection of infinite, infinite index quasiconvex subgroups $H_1,H_2,\cdots,H_l$, there exists a subgroup $F<\mathbb F_n$ of rank $2$ such that no element of $F$ is conjugate in $\mathbb F_n$ to an element of $H_1\cup\cdots\cup H_l$. See \cite[Theorem 5.16]{kapovich-nonqc} and Proposition \ref{pingpong prop}.

Moreover, the subgroup $F$ in I. Kapovich's construction is malnormal in $\mathbb F_n$, which took him to work a lot \cite[Section $5$]{kapovich-nonqc}. However, one can easily find such a malnormal subgroup using {\em graded small-cancellation} of Wise (\cite{wise-mal-1,wise-mal-2}). Finally, I. Kapovich showed that this malnormal subgroup remains malnormal in the ambient group $G$, which was easier as the group $G$ was torsion-free.

In our case, the generators of $F$ come from a very restricted class (Proposition \ref{pingpong prop}) as required later in our proof. We achieve such $F$ by studying the \emph{coned-off graphs} of Cayley graphs of hyperbolic groups which need not be free (see Proposition \ref{pingpong criteria}). Therefore, the final approach we adopt differs from that of I. Kapovich (See Section \ref{sec-main thm}).


\smallskip
{\bf Organization of the paper}:
In Section \ref{sec-gen on mal} we recall some standard facts about malnormality, commensurability
etc. and set some notation for future use. Section \ref{sec-pre on hyp geo} contains
basics of hyperbolic geometry. Section \ref{sec-qc subgroups} is devoted to properties of quasiconvex
subgroups of hyperbolic groups. This is rather long and it contains most of the
technical results of the paper. Finally we prove our main theorems in Section \ref{sec-main thm}.

\section{Generalities on malnormality and related notions}\label{sec-gen on mal}

Suppose $G$ is an infinite group with an infinite subgroup $H$.

\begin{defn}
We say that $H$ is {\em weakly malnormal} or {\em almost malnormal} in $G$
if for all $g\in G\setminus H$, the intersection $H\cap H^g$ is
finite.
\end{defn}
This notion was introduced by B. Baumslag (\cite{BB-malnormal}).
This is sort of opposite or antithetic to being normal. More generally, the
following concept is relevant in this context.

\begin{defn}
The {\em commensurator} of $H$ in $G$ is the subgroup of $G$ defined by
$$Comm_G(H)=\{ g\in G: [H: H\cap H^g] \, \mbox{and} \, [H^g: H\cap H^g]\, \mbox{ are finite}\}$$
\end{defn}
We note that in general $H<Comm_G(H)$ and $H\neq Comm_G(H)$ implies that $H$ is not weakly
malnormal in $G$.

We define the following subsets to analyze weak malnormality:

$$\mathcal I_G(H)=\{ g\in G: H\cap H^g \, \mbox{ is infinite}\}.$$

$$ \mathcal F_G(H)=\{g\in G\setminus H: H\cap H^g \, \mbox{ is finite}\}.$$

Clearly the three sets $Comm_G(H)$, $\mathcal I_G(H)$ and $\mathcal F_G(H)$
satisfy the following properties for an infinite subgroup $H$.

\begin{lemma}\label{lem-properties of I(H)}
\begin{enumerate}
\item The sets $Comm_G(H) \subset \mathcal I_G(H)$ and $\mathcal F_G(H)\cap \mathcal I_G(H)=\emptyset$.
\item $H$ is weakly malnormal in $G$ if and only if $\mathcal F_G(H)=G\setminus H$, and in which
case $Comm_G(H)=H=\mathcal I_G(H)$.
\item The natural action of $H$ by conjugation on $G$, keeps these sets stable.
\end{enumerate}
\end{lemma}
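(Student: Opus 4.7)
The plan is to verify each of the three parts by directly unpacking the definitions, using only the standing assumption that $H$ is infinite and the elementary fact that $H$ is stable under conjugation by its own elements. For part (1), I would argue that if $g \in Comm_G(H)$ then $H \cap H^g$ is a finite-index subgroup of the infinite group $H$ and is therefore itself infinite, so $g \in \mathcal I_G(H)$; the disjointness $\mathcal F_G(H) \cap \mathcal I_G(H) = \emptyset$ is then immediate, since one set demands that $H \cap H^g$ be finite and the other that it be infinite.

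For part (2), the equivalence is nearly a tautology: weak malnormality says exactly that every $g \in G \setminus H$ places $H \cap H^g$ in the finite case, so $G \setminus H \subseteq \mathcal F_G(H)$, and the reverse inclusion holds by the very definition of $\mathcal F_G(H)$. For the consequence, I note that the chain $H \subseteq Comm_G(H) \subseteq \mathcal I_G(H)$ always holds (the first inclusion using $H^h = H$ for $h \in H$ so that $H \cap H^h = H$, the second being part (1)). Under the assumption $\mathcal F_G(H) = G \setminus H$, combined with $\mathcal I_G(H) \cap \mathcal F_G(H) = \emptyset$, we force $\mathcal I_G(H) \subseteq H$, collapsing the chain to equality.

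For part (3), the key identity is $H^{hgh^{-1}} = h H^g h^{-1}$ for $h \in H$, from which the stability $H = hHh^{-1}$ yields
\[
H \cap H^{hgh^{-1}} \;=\; h\bigl(H \cap H^g\bigr)h^{-1}.
\]
Since conjugation by $h$ is a bijection on $G$, this identification preserves cardinality (in particular the finite/infinite dichotomy) and preserves the indices $[H : H \cap H^g]$ and $[H^g : H \cap H^g]$. It follows that each of $Comm_G(H)$, $\mathcal I_G(H)$, and $\mathcal F_G(H)$ is invariant under conjugation by elements of $H$.

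The lemma is essentially a bookkeeping exercise, and I do not expect any genuine obstacle. In particular, nothing in the argument uses hyperbolicity, quasiconvexity, or any feature of $G$ beyond $H$ being an infinite subgroup, so the proof should be short and self-contained.
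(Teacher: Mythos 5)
Your proof is correct: each part follows by exactly the definitional bookkeeping you describe, and the two key observations---that a finite-index subgroup of the infinite group $H$ is infinite, and that the identity $H \cap H^{hgh^{-1}} = h\left(H \cap H^g\right)h^{-1}$ preserves cardinality and indices---are precisely what is needed. The paper gives no proof at all (the lemma is prefaced by ``Clearly the three sets \ldots satisfy the following properties''), so your write-up simply supplies the routine verification the authors intended, i.e.\ it is essentially the same (and only) approach.
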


\begin{defn}
We shall say that a subgroup $H$ of a group $G$ is {\em self-commensurated} if $H=Comm_G(H)$.
\end{defn}

It follows that weakly malnormal subgroups are self-commensurated. Also for any $H<G$,
$Comm_G(H)$ is self-commensurated if $H$ has finite index in $Comm_G(H)$. Thus, later in this paper,
in search for weakly malnormal subgroups we always ensure first that a subgroup is self-commensurated.

Most of what follows can be thought of as a generalization of the following theorem of
Ilya Kapovich.

\begin{theorem}\textup{(\cite[Theorem $6.7$]{kapovich-nonqc})}\label{malnormal in free}
Suppose $H<G$ are nonelementary, finitely generated free groups. Then there is a nonelementary,
finitely generated (free) subgroup $K<H$ such that $K$ is malnormal in $G$.
\end{theorem}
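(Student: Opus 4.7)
The approach is to find $K$ inside $H$ via small-cancellation. First I would reduce to the case that $H$ has rank exactly $2$: every nonelementary free group contains a rank-$2$ free subgroup, and any rank-$2$ free subgroup of $H$ malnormal in $G$ already proves the statement. Fix a free basis $X$ of $G$; then $H$ is quasiconvex in $G$ (every finitely generated subgroup of a free group is quasiconvex), and it is encoded by a finite Stallings graph $\Gamma_H$ over $X^{\pm 1}$, whose loops supply an exponentially growing family of cyclically reduced words representing elements of $H$.

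The main step is to select two cyclically reduced words $w_1, w_2 \in H$ (written over $X^{\pm 1}$) so that, for all sufficiently large $N$, the symmetrised family $\{w_1^{\pm N}, w_2^{\pm N}\}$ satisfies a Gromov $C'(\lambda)$ small-cancellation condition in the free group $G$, with $\lambda$ as small as desired (say $\lambda < 1/10$). A pigeonhole/genericity argument on loops in $\Gamma_H$ produces such a pair, since cyclically reduced loops grow exponentially in length while the set of loops sharing a long common piece with a fixed word grows sub-exponentially. Setting $K = \langle w_1^N, w_2^N \rangle$, a ping-pong argument shows that $K$ is free of rank $2$; quasiconvexity of $K$ is automatic; and classical small-cancellation theory, or equivalently Wise's graded small-cancellation \cite{wise-mal-1,wise-mal-2}, implies that $K$ is malnormal in $G$: any $g \in G$ conjugating a nontrivial $k \in K$ back into $K$ must align the long $w_i^{\pm N}$-blocks in the cyclically reduced $X^{\pm 1}$-forms of $k$ and $g k g^{-1}$ on both sides of $g$, forcing $g$ itself to be a word in $\{w_1^{\pm N}, w_2^{\pm N}\}$ and hence $g \in K$.

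\textbf{Main obstacle.} The hard part is the construction step: producing the pair $w_1, w_2$ simultaneously \emph{inside} the prescribed subgroup $H$ while also satisfying a strong small-cancellation condition with respect to the basis $X$ of the larger group $G$. This is precisely where Kapovich's original proof in \cite{kapovich-nonqc} expends substantial combinatorial effort via Stallings foldings, and where Wise's graded small-cancellation machinery provides the cleanest alternative, as observed in the remark following Theorem~A.
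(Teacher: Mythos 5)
First, a point of reference: the paper does not prove this statement at all; it is quoted as Theorem $6.7$ of \cite{kapovich-nonqc} and used as a black box in Step 2 of the proof of Theorem \ref{main malnormal thm}, with the introduction remarking that Wise's graded small cancellation \cite{wise-mal-1,wise-mal-2} gives an alternative construction. So your proposal stands on its own, and it contains a fatal error: the subgroup $K=\langle w_1^N,w_2^N\rangle$ is \emph{never} malnormal (nor even weakly malnormal) in $G$ once $N\ge 2$, no matter how strong a small-cancellation condition the powers satisfy. Indeed, take $g=w_1$. Since $w_1,w_2$ are independent, $\{w_1,w_2\}$ is a basis of $J=\langle w_1,w_2\rangle\cong\mathbb F_2$, and the homomorphism $J\to(\mathbb Z/N)\times(\mathbb Z/N)$ sending $w_1,w_2$ to the standard generators kills $w_1^N$ and $w_2^N$, hence all of $K$, but not $w_1$; thus $g\notin K$. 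Yet $g$ commutes with $w_1^N$, so $w_1^N\in K\cap K^{g}$, which is therefore infinite. The general principle is that malnormal (even weakly malnormal) subgroups are root-closed: if $w^N\in K$ with $w$ of infinite order, then $\langle w^N\rangle\subset K\cap K^{w}$ forces $w\in K$. Consequently no subgroup generated by proper powers can be malnormal unless it already contains the roots. This is precisely where your concluding ``alignment'' argument breaks down: matching the $w_1$-periodic blocks of $k$ and $gkg^{-1}$ determines $g$ only up to a power of the \emph{period} $w_1$, not up to a power of $w_1^N$, so one cannot conclude $g\in K$.

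The small-cancellation idea itself is salvageable, but the generators must be long words that are \emph{not} proper powers, whose small overlaps come from their internal letter pattern rather than from raising to powers (for instance Wise-type words such as $x_1x_2\,x_1x_2^2\cdots x_1x_2^{k}$ written in a basis adapted to $H$), and one needs a criterion that genuinely certifies malnormality of the generated subgroup rather than of the one-relator-style quotient: either Wise's graded small cancellation \cite{wise-mal-1,wise-mal-2}, or the Stallings fibre-product criterion that every component of the core of $\Gamma_K\times_{\Gamma_G}\Gamma_K$ other than the diagonal one is a tree. Producing such a pair of words \emph{inside} the prescribed subgroup $H$, simultaneously avoiding proper powers and long common pieces with respect to the basis of $G$, is exactly the content of Kapovich's Section $5$; your sketch correctly identifies this as the main obstacle but proposes to resolve it by the power trick, which the root-closure obstruction rules out.
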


{\bf Question.} {\em Suppose $H<G$ are infinite groups which are not necessarily torsion free.
Are there subgroups $K<H$ such that $Comm_G(K)$ is weakly malnormal in $G$?}\smallskip

One in general may restrict to a family of group pairs $H<G$ of ones choice and ask the
same question and also may require the subgroup $K$ to have certain properties. In our
case we take a nonelementary quasiconvex subgroup $H$ of a hyperbolic group $G$ and also require
$K$ to be quasiconvex and show in this paper that the answer is positive in this case.

In the rest of this section we pick up some other results that will be needed later in
the paper.
\subsection{Some other algebraic results}
Following are two other easy lemmas which are used later in the paper.
\begin{lemma}\label{malnormal transitive}
Suppose we have three groups $K<H<G$. If $H$ is (weakly) malnormal in $G$ and $K$ is
(weakly) malnormal in $H$ then $K$ is (weakly) malnormal in $G$.
\end{lemma}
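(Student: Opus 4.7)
The plan is to prove both the malnormal and weakly malnormal cases in parallel, by a two-case argument on where the conjugating element lies relative to the intermediate subgroup $H$. I would unfold the definitions and then split on whether the conjugating element $g\in G\setminus K$ belongs to $H$ or not.

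Concretely, fix $g\in G\setminus K$; the goal is to show $K\cap K^g$ is trivial (resp. finite). If $g\in H$, then since $K\subseteq H$ and $g\notin K$, we have $g\in H\setminus K$, so (weak) malnormality of $K$ in $H$ gives that $K\cap K^g$ is trivial (resp. finite), and we are done. If instead $g\in G\setminus H$, the key observation is the inclusion
\[
K\cap K^g \;\subseteq\; H\cap H^g,
\]
which holds simply because $K<H$ implies $K^g<H^g$. Since $H$ is (weakly) malnormal in $G$ and $g\notin H$, the right-hand side is trivial (resp. finite), and the same then follows for $K\cap K^g$.

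There is essentially no obstacle here: the lemma is a clean transitivity statement and the whole argument fits in a few lines once one splits into the two cases above. The only tiny subtlety worth flagging is that in the first case one must verify $g\notin K$ implies $g\in H\setminus K$ (which is immediate from $g\in H$), so that the hypothesis on $K<H$ genuinely applies.
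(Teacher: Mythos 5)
Your proof is correct: the two-case split on whether $g\in H$ or $g\in G\setminus H$, together with the inclusion $K\cap K^g\subseteq H\cap H^g$ in the latter case, is exactly the standard argument, and the paper itself offers no proof (it records this as one of two ``easy lemmas''), so there is nothing to diverge from. Both the malnormal and weakly malnormal versions go through verbatim, since a subgroup of a trivial (resp.\ finite) group is trivial (resp.\ finite).
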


\begin{lemma}\label{lemma: commensurator}
Suppose $G$ is any group and $H,K$ any two subgroups such that the index of $H\cap K$
is finite in both $H$ and $K$. Then $Comm_G(H)=Comm_G(K)$.
\end{lemma}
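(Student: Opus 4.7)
The plan is to observe that commensurability of subgroups, defined by $A \approx B$ iff both $[A : A \cap B]$ and $[B : A \cap B]$ are finite, is an equivalence relation on the subgroups of $G$ that is preserved by conjugation, and that by definition $g \in Comm_G(H)$ iff $H \approx H^g$. Once this is set up, the lemma reduces to a short chain of equivalences.

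First I would record three routine facts. (a) The hypothesis of the lemma simply says $H \approx K$. (b) Since inner automorphisms are bijections of $G$ that preserve indices and commute with intersections, one has $(H \cap K)^g = H^g \cap K^g$ and $[H^g : (H\cap K)^g] = [H : H \cap K]$, etc.; hence $A \approx B$ implies $A^g \approx B^g$ for every $g \in G$, and in particular $H \approx K$ gives $H^g \approx K^g$ for every $g$. (c) The relation $\approx$ is transitive: if $A \approx B$ and $B \approx C$, then $A \cap B \cap C$ has finite index both in $A \cap B$ (being the intersection of the finite-index subgroup $B \cap C$ of $B$ with the subgroup $A \cap B$ of $B$) and in $B \cap C$ (by the symmetric argument), hence finite index in both $A$ and $C$ by multiplicativity of the index.

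Finally, for any $g \in Comm_G(H)$, the definition of the commensurator gives $H \approx H^g$. Chaining (a), (b), and (c) yields
$$K \approx H \approx H^g \approx K^g,$$
so $K \approx K^g$ by transitivity, i.e.\ $g \in Comm_G(K)$. This proves $Comm_G(H) \subseteq Comm_G(K)$; exchanging the roles of $H$ and $K$ gives the reverse inclusion, and hence equality. There is no genuine obstacle here; the only care needed is the short index-arithmetic check establishing transitivity of $\approx$.
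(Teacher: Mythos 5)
Your proof is correct. The paper states this lemma without proof (it is listed as one of two ``easy lemmas''), so there is no argument in the paper to compare against; your route --- observing that commensurability $\approx$ is a conjugation-equivariant equivalence relation, with transitivity supplied by the index estimate $[A\cap B : A\cap B\cap C]\le [B : B\cap C]$ and multiplicativity of the index, then chaining $K\approx H\approx H^g\approx K^g$ --- is the standard argument the authors evidently had in mind, and it is complete.
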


However, the following old result due to G. Baumslag and T. Taylor will be crucially used in the
proof of our main theorem.

\begin{theorem}\textup{(\cite[Proposition 1]{BT})}\label{finite gp inj}
	Suppose $K$ is a finite subgroup of $Aut(\mathbb F_n)$ where $n\ge2$. Then the restriction of the natural map $Aut(\mathbb F_n)\map GL_n(\Z)$ to $K$ is injective.
\end{theorem}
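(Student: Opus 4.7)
The plan is to reformulate the statement: the map $\pi \colon Aut(\mathbb{F}_n) \to GL_n(\Z)$ is the one induced by abelianization, so its kernel is the group $IA_n$ of automorphisms acting trivially on $\mathbb{F}_n^{\mathrm{ab}} = \Z^n$. The theorem is therefore equivalent to the assertion that $IA_n$ is torsion-free. To prove this, I would fix $\phi \in IA_n$ with $\phi^k = \mathrm{id}$ for some $k \geq 1$ and set $d(x) := x^{-1}\phi(x)$; because $\phi$ is IA, $d$ takes values in $\gamma_2 := [\mathbb{F}_n, \mathbb{F}_n]$, so showing $d \equiv 1$ forces $\phi = \mathrm{id}$.

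First I would iterate $\phi(x) = x \cdot d(x)$ to unpack $\phi^k(x) = x$ and obtain the cocycle identity
$$d(x) \cdot \phi(d(x)) \cdot \phi^2(d(x)) \cdots \phi^{k-1}(d(x)) = 1 \quad \text{in } \mathbb{F}_n.$$
Write $\gamma_j := \gamma_j(\mathbb{F}_n)$ and $L_j := \gamma_j / \gamma_{j+1}$. I would then invoke the classical Magnus--Witt theorem that the associated graded $L_* = \bigoplus_j L_j$ is the free Lie algebra over $\Z$ generated by $L_1 = \mathbb{F}_n^{\mathrm{ab}}$; in particular each $L_j$ is a finitely generated free $\Z$-module. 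Since $\phi$ preserves the lower central series it induces a graded Lie algebra automorphism $\bar\phi$ on $L_*$ which is the identity on $L_1$ by the IA hypothesis, and hence --- because $L_*$ is generated in degree one --- the identity on every $L_j$.

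The rest is an induction on $m \geq 2$ to show $d(x) \in \gamma_m$ for every $x$, with the base case $m = 2$ being the defining property of $d$. Assuming $d(x) \in \gamma_m$, every $\phi^i(d(x))$ has the same class in $L_m$ as $d(x)$; since $\gamma_m/\gamma_{m+1}$ is abelian, the cocycle identity reduces modulo $\gamma_{m+1}$ to $k \cdot [d(x)] = 0$ in $L_m$, and torsion-freeness of $L_m$ forces $d(x) \in \gamma_{m+1}$. The residual nilpotence of $\mathbb{F}_n$ (a classical theorem of Magnus), $\bigcap_m \gamma_m = \{1\}$, then yields $d(x) = 1$, so $\phi = \mathrm{id}$. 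The main obstacle is really in marshalling the two classical inputs --- the free-Lie-algebra structure of $L_*$ (in particular torsion-freeness of each $L_m$) and residual nilpotence of $\mathbb{F}_n$; once these are taken as black boxes, the rest is a clean descent along the lower central series.
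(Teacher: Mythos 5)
The paper never proves this statement: it is imported directly from Baumslag--Taylor \cite{BT}, so there is no in-paper argument to compare against. Your proof is correct and self-contained, and it is essentially the classical argument behind that citation. The reduction is valid: the kernel of $Aut(\mathbb F_n)\map GL_n(\Z)$ is exactly the group $IA_n$ of automorphisms acting trivially on the abelianization, so injectivity on every finite subgroup is equivalent to $IA_n$ being torsion-free. Your descent is also sound: the cocycle identity $d(x)\,\phi(d(x))\cdots\phi^{k-1}(d(x))=1$ follows by iterating $\phi(x)=x\,d(x)$; each factor $\phi^i(d(x))$ is congruent to $d(x)$ modulo $\gamma_{m+1}$ because an IA-automorphism induces the identity on every graded quotient $\gamma_m/\gamma_{m+1}$; and then normality of $\gamma_{m+1}$ turns the identity into $k\cdot[d(x)]=0$ in $L_m$, so torsion-freeness of $L_m$ pushes $d(x)$ one step down the lower central series, and residual nilpotence of $\mathbb F_n$ finishes. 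Two small remarks. First, generation of the graded Lie ring in degree one does not need Magnus--Witt: for any group, $\gamma_{m+1}=[\gamma_m,G]$, so $L_{m+1}$ is spanned by brackets of $L_m$ with $L_1$; the free Lie ring theorem is only needed for torsion-freeness of the quotients $L_m$ (alternatively, one can quote directly that free groups are residually torsion-free nilpotent). Second, your argument proves the stronger statement that $IA_n$ is torsion-free, which immediately gives the finite-subgroup formulation used in the paper; this is exactly the form in which the result is usually stated and applied.
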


\section{Prerequisites from hyperbolic geometry}\label{sec-pre on hyp geo}

\subsection{Some definitions from coarse geometry}
For various notions and results in this section one is refered to \cite{gromov-hypgps} or
 \cite[Chapters I.1, III.H]{bridson-haefliger}. All our metric spaces are geodesic metric spaces
 and most of the time we work with graphs. Graphs are always assumed to be connected and the edge
 lengths are assumed to be $1$ so that these are geodesic metric spaces.

 A {\em geodesic segment} joining $x$ and $y$ in a metric space $X$ is denoted by $[x,y]_X$
 (or simply $[x,y]$ when $X$ is understood) and this is the image of an isometric embedding
 $\alpha: [0, d(x,y)]\map X$ with $\alpha(0)=x$ and $\alpha(d(x,y))=y$. For two subsets
 $A,B$ of a metric space $X$, the {\em Hausdorff distance} between $A$ and $B$ is
 $Hd(A,B):=inf~\{r\geq 0:A\sse N_r(B),B\sse N_r(A)\}$.
 A metric space $X$ is said to be {\em proper} if closed and bounded subsets of $X$ are compact.

Suppose $f:(X,d_X)\map(Y,d_Y)$ is a map between two metric spaces. Let $k\ge1$ and $\ep\ge0$. We say that $f$ is a
$(k,\epsilon)$-{\em quasiisometric (qi) embedding} (or just a qi embedding when the constants are
not important) if $$\frac{1}{k}d_X(x,x')-\ep\le d_Y(f(x),f(x'))\le kd_X(x,x')+\ep.$$
When the map $f$ is, moreover, an inclusion (e.g. when $X$ is a subgraph of $Y$) then we say that
$X$ is qi embedded in $Y$. A quasiisometric embedding $f:X\map Y$ is called a {\em quasiisometry}
if there is $R\geq 0$ such that $N_R(f(X))=Y$.
A map $\al:I\sse\R\map X$ is said to be $k$-{\em quasigeodesic} if it is $k$-qi embedding
where $I$ is an interval in $\mathbb R$. We note that in this case $\al$ is a geodesic if
$k=1, \epsilon =0$. When, moreover, $I$ is of the form $(-\infty, a]$ or $[a,\infty)$ for
some $a\in \mathbb R$ then $\al$ is called a {\em quasigeodesic ray} and when $I=\mathbb R$ then
it is called a {\em quasigeodesic line}.

\subsection{Hyperbolic metric spaces and hyperbolic groups}
We assume that the reader is familiar with hyperbolic spaces (and groups) and their boundaries
(see \cite{gromov-hypgps}, \cite{abc}, \cite[Chapter III.H]{bridson-haefliger} and \cite{GhH}).
In this section we briefly mention some of the results that we will need later in the paper.
Proofs of most of the results are omitted and references are given.

We recall that a geodesic metric space $X$ is  $\dl$-hyperbolic for some $\dl\geq 0$
(or just hyperbolic when the constant is not important) if the geodesic triangles are $\dl$-slim.
A finitely generated group $G$ is hyperbolic if one (any) of its Cayley graph with respect to
some (any) finite generating set is hyperbolic. In most cases, a choice of a generating set is
assumed and then the corresponding Cayley graph is denoted by $\Gamma_G$ or simply $\Gamma$.

\subsubsection{Boundary at infinity}
Suppose $X$ is a hyperbolic metric space.
Two (quasi)geodesic rays $\al$ and $\bt$ in $X$  are said to be {\em asymptotic} if their
Hausdorff distance is finite. This defines an equivalence relation on the set of (quasi)geodesic rays.
The set of all equivalence classes of such rays is called the (quasi)geodesic boundary of $X$
and is denote by $\pa X$ (or $\pa_q X$ for the case of quasigeodesic boundary).

For a (quasi)geodesic ray $\al$, its equivalence class in $\pa X$ or $\pa_q X$ is denoted
by $\al(\infty)$. The natural map $\pa X\map \pa_q X$ is injective and it is a bijection for
proper hyperbolic spaces.\smallskip

{\bf Convention.} {\em From now, we shall abuse notation and use just $\pa X$ in stead of
$\pa_q X$ even in case of nonproper spaces, e.g. coned-off graphs of Cayley graphs.}\smallskip

However, in general, one can define a topology on $\bar{X}=X\cup \pa X$. See \cite[Chapter III.H, p. $432$]{bridson-haefliger}, \cite[p. $49$]{abc}. As we do not need
this explicitly anywhere in the paper we skip its definition and instead mention
various features of it through many lemmas and propositions. For instance we will make use of
the following lemma repeatedly in the paper`.

\begin{lemma}
Suppose we have an isometry $f:X\map X$ of hyperbolic metric space $X$. Then $f$ extends to a
homeomorphism $\bar{f}: \bar{ X}\map \bar{X}$.

This defines an action of $Isom(X)$ $-$ the group of isometries of $X$, on $\bar{X}$.
\end{lemma}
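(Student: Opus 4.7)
The plan is to define $\bar{f}$ separately on $X$ and on $\pa X$, and then check that the resulting set-map is a homeomorphism by reducing every claim to the fact that isometries preserve distances (and hence Gromov products) on the nose, rather than merely coarsely.

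First I would set $\bar{f}|_X := f$, and define $\bar{f}$ on $\pa X$ as follows: given $\xi\in\pa X$ represented by a (quasi)geodesic ray $\al:[0,\infty)\map X$, declare $\bar{f}(\xi):=(f\circ\al)(\infty)$. Because $f$ is an isometry, $f\circ\al$ is again a geodesic ray (respectively, a $k$-quasigeodesic ray with the same constants as $\al$), so the right side makes sense. For well-definedness, if $\al,\bt$ both represent $\xi$ then $Hd(\al,\bt)<\infty$; since $f$ preserves distances, $Hd(f\circ\al,f\circ\bt)=Hd(\al,\bt)<\infty$, so $f\circ\al$ and $f\circ\bt$ are asymptotic, proving that $\bar{f}(\xi)$ is independent of the chosen representative.

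Next I would verify continuity of $\bar{f}$. Using the standard sequential description of the topology on $\bar{X}$ (see \cite[Chapter III.H]{bridson-haefliger}), a sequence in $\bar{X}$ converges to a point $\eta\in\pa X$ iff the relevant Gromov products $(z_n|w_n)_{x_0}$ tend to $\infty$ for (any, equivalently every) sequence $w_n$ tending to $\eta$. As $f$ is an isometry, it preserves Gromov products exactly, i.e.\ $(f(a)|f(b))_{f(c)}=(a|b)_c$, so $\bar{f}$ sends convergent sequences to convergent sequences with the expected limits, both at interior and at boundary points. Applying the same construction to the isometry $f^{-1}$ produces $\overline{f^{-1}}$, and directly from the definition one verifies $\bar{f}\circ\overline{f^{-1}}=\overline{f\circ f^{-1}}=\overline{\mathrm{id}_X}=\mathrm{id}_{\bar{X}}$ (and analogously on the other side), so $\bar{f}$ is a homeomorphism. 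The identity $\overline{f\circ g}=\bar{f}\circ\bar{g}$, again immediate from the definition, provides the homomorphism $Isom(X)\map\mathrm{Homeo}(\bar{X})$ asserted in the second sentence.

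I do not anticipate a real obstacle; the only delicate point is continuity of $\bar{f}$ at boundary points, and this is handled uniformly by the observation above that an isometry preserves the Gromov product exactly, which is precisely the data used to topologize $\bar{X}$. In particular, none of the usual slippage by additive constants (which makes the corresponding statement for quasiisometries far more subtle) appears here.
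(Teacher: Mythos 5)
Your proof is correct, but there is nothing in the paper to compare it against: the paper states this lemma without proof, explicitly declining even to define the topology on $\bar{X}=X\cup\partial X$ and instead pointing the reader to \cite[Chapter III.H]{bridson-haefliger} and \cite[p.~49]{abc}. Your argument is the standard one those references contain, and it is sound: defining $\bar{f}$ on $\partial X$ by $\xi=\alpha(\infty)\mapsto(f\circ\alpha)(\infty)$ is well posed because an isometry preserves the Hausdorff distance between rays; continuity follows because an isometry preserves Gromov products exactly; and both the continuous inverse and the action statement follow from the functoriality $\overline{f\circ g}=\bar{f}\circ\bar{g}$ applied to $f^{-1}$ and to composition in $Isom(X)$. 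Two small points deserve a sentence each. First, exact preservation gives $(f(a)\mid f(b))_{f(x_0)}=(a\mid b)_{x_0}$, i.e.\ the basepoint moves; since Gromov products based at $x_0$ and at $f(x_0)$ differ by at most $d(x_0,f(x_0))$, convergence tested at the fixed basepoint is unaffected, but this shift should be acknowledged rather than elided. Second, by the paper's standing convention $\partial X$ denotes the quasigeodesic boundary, since the spaces in play (e.g.\ coned-off Cayley graphs) need not be proper; so it matters that your construction accepts quasigeodesic rays as representatives, which --- as you note --- it does, with constants unchanged under $f$.
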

 The restriction of $\bar{X}$ in the above lemma to $\pa X$ is denoted by $\pa f$.
As quasiisometries are not continuous in general one cannot expect the analogue of the above
lemma to hold in that generality. However, each quasiisometry between hyperbolic metric
spaces induce a homeomorphism on the boundary. Using this one shows that boundary of a
hyperbolic group is well defined up to homeomorphism. Hence, for a hyperbolic group $G$,
we shall denote by $\pa G$ the boundary of any Cayley graph $\Gamma_G$ of $G$.
Moreover, the natural $G$-action on $\Gamma_G$ by isometries induces an action on
$\pa G$ through homeomorphisms.

\begin{defn}
Suppose $X$ is a hyperbolic metric space and $f:X\map X$ is an isometry.
For any $x\in X$ let $\mathcal O_x:\mathbb Z \map X$ be the map $n\mapsto f^n(x)$.

(1) We say that $f$ is an {\em elliptic} isometry of $X$ if the image of the map $\mathcal O_x$ has finite
diameter in $X$ for some (any) $x\in X$..

(2) We say that $f$ is a {\em loxodromic} isometry if for some (any) $x\in X$, the map
$\mathcal O_x$ is qi embedding of $\mathbb Z$ into $X$.
\end{defn}

\begin{lemma}{\em (Characteristics of loxodromic isometries)}\label{char loxo isom}
(a) Suppose $f$ is a loxodromic isometry of a hyperbolic metric space $X$. Let $x\in X$.
Then the following hold.

(1) $f$ has exactly two  fixed points in $\pa X$, namely the limits $\lim_{n\map \infty} f^{\pm n}(x)$
{\em (which we denote by $f^{\pm \infty}$ respectively)}.

(2) Given $\xi \in \pa X$, if $\xi\neq f^{-\infty}$ then $\lim_{n\map \infty} f^n.\xi= f^{\infty}$.

\smallskip
(b) Suppose $f$ is a nonelliptic isometry of a hyperbolic metric space $X$ such that $f$ has precisely
two fixed points in $\pa X$. Then $f$ is a loxodromic isometry.
\end{lemma}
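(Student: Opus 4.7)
The plan is to harness the fact that a loxodromic element's orbit gives a quasigeodesic line $\ell$ in $X$, and to deduce the boundary dynamics from stability of quasigeodesics and slimness of ideal triangles. For (a), I would first assemble the two distinguished boundary points: since $\mathcal O_x\colon\Z\map X$ is a quasi-isometric embedding, interpolating the orbit by geodesic segments produces a quasigeodesic line $\ell$, whose two ends in $\pa X$ are $f^{\pm\infty}:=\lim f^{\pm n}(x)$; these are distinct because $\ell$ is a genuine line, not a ray. Independence of the basepoint $x$ is immediate from $d(f^n(x),f^n(y))=d(x,y)$. Both points are fixed by $f$ because $f\circ\ell$ is merely a one-index shift of $\ell$, hence Hausdorff-close to it (equivalently, asymptotic at both ends). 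Uniqueness of these fixed points in (a)(1) would then follow at once from (a)(2): any fixed $\xi\neq f^{-\infty}$ must equal $\lim f^n.\xi = f^\infty$.

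The main work is part (a)(2). Given $\xi\neq f^{-\infty}$, I pick a quasigeodesic ray $\al$ from $x$ representing $\xi$ and look at the ideal ``triangle'' with vertices $f^{-\infty}, f^{\infty}, \xi$, whose sides are $\ell|_{(-\infty,0]}$, $\ell|_{[0,\infty)}$, and $\al$. Slimness of ideal quasigeodesic triangles, combined with $\xi\neq f^{-\infty}$, forces $\al$ to eventually fellow-travel $\ell|_{[0,\infty)}$ within a uniform constant (otherwise $\al$ would have to be asymptotic to the ray toward $f^{-\infty}$, contradicting $\xi\neq f^{-\infty}$). Applying the isometry $f^n$ translates the picture along $\ell$: $f^n\al$ is a quasigeodesic ray from $f^n(x)$ to $f^n.\xi$ that, outside a bounded initial segment, fellow-travels $\ell|_{[n,\infty)}$. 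Consequently, for any basepoint $x_0$, the Gromov product $(f^n.\xi\mid f^\infty)_{x_0}$ tends to $\infty$, which is exactly the convergence $f^n.\xi\map f^\infty$ in the quasigeodesic boundary used throughout the paper.

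For (b), let $\xi^\pm$ be the two boundary fixed points of $f$. Visibility of hyperbolic spaces yields a quasigeodesic line $\gm$ from $\xi^-$ to $\xi^+$; since $f\gm$ shares its ideal endpoints with $\gm$, one has $Hd(f\gm,\gm)<\infty$. Nearest-point projection of the $f$-orbit of $x$ onto $\gm$ produces, up to bounded error, a quasi-isometry of $\gm\isom\R$ preserving both ends, hence lying within bounded distance of some translation $t\mapsto t+T$. The nonelliptic hypothesis forces $T\neq 0$ (else orbits on $\gm$, and then orbits of $x$, would be bounded), and any $T\neq 0$ yields a linearly escaping orbit; thus $\mathcal O_x$ is a quasi-isometric embedding and $f$ is loxodromic. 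The delicate steps I expect to be the main obstacles are the uniform fellow-traveling estimate in (a)(2)---especially since the paper uses quasigeodesic boundaries of possibly non-proper spaces such as coned-off Cayley graphs---and, in (b), pinning the induced action on $\gm$ to a genuine translation rather than something with merely unbounded displacement; both reductions ultimately rest on the Morse stability lemma and slimness of ideal quasigeodesic triangles.
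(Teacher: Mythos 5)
Your frame for part (a) — interpolate the orbit into a quasigeodesic line $\ell$, note $f\circ\ell$ is a shift of $\ell$, deduce (1) from (2) — is fine and is the standard picture (the paper itself states Lemma \ref{char loxo isom} with no proof, as a known fact). But the step carrying all the weight in (a)(2) is false as written. If $\xi\notin\{f^{\pm\infty}\}$, which is the only case needing an argument, the ray $\al$ does \emph{not} eventually fellow-travel $\ell|_{[0,\infty)}$: tails at bounded Hausdorff distance would make $\al$ asymptotic to $\ell|_{[0,\infty)}$ and hence force $\xi=f^{\infty}$. Your dichotomy (``otherwise $\al$ would be asymptotic to the ray toward $f^{-\infty}$'') is not exhaustive — generically $\al$ fellow-travels each ray of $\ell$ only for a finite time, roughly the Gromov products $(\xi\mid f^{\pm\infty})_x$, and then diverges from both. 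For the same reason, the translated claim that $f^n\al$ fellow-travels $\ell|_{[n,\infty)}$ outside a bounded segment would give $f^n.\xi=f^{\infty}$ exactly, which is absurd. What $\xi\neq f^{-\infty}$ actually provides is the uniform bound $(\xi\mid f^{-n}x)_x\leq(\xi\mid f^{-\infty})_x+O(\delta)$; by equivariance this bounds the corner product $(x\mid f^n.\xi)_{f^nx}$, so the concatenation of $\ell|_{[0,n]}$ with $f^n\al$ is a uniform quasigeodesic from $x$ to $f^n.\xi$, whence $(f^n.\xi\mid f^{\infty})_x\geq d(x,f^nx)-O(1)\map\infty$. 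Put differently: $f^n\al$ overlaps $\ell$ for a \emph{constant} length $(\xi\mid f^{\infty})_x$ independent of $n$, but that overlap sits at distance about $n$ along $\ell$; this, not eventual fellow-travelling, is what drives $f^n.\xi\map f^{\infty}$.

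Part (b) has a more serious gap. A self-quasi-isometry of $\R$ preserving both ends need \emph{not} lie within bounded distance of a translation: $t\mapsto 2t$ is a counterexample. That conclusion is valid only for $(1,C)$-quasi-isometries, whereas projecting $f$ to a $(k,c)$-quasigeodesic $\gm$ yields only a $(k^2,C)$-quasi-isometry; and in the non-proper setting the paper needs (coned-off Cayley graphs), you cannot replace $\gm$ by a genuine geodesic between boundary points. Moreover, ``$T\neq0$'' is not even well defined (a map $C$-close to a translation by $T$ is also close to nearby translations), and iterating a map that is $C$-close to a translation accrues drift of order $nC$, so ``nonelliptic $\Rightarrow T\neq0\Rightarrow$ linear escape'' does not close up. This is precisely where the content of (b) lives: one must rule out unbounded-but-sublinear (parabolic-like) motion along $\gm$, and that requires uniformity over \emph{all} iterates — e.g.\ take $\gm$ a rough geodesic, use that every $f^n\gm$ lies within a Hausdorff distance of $\gm$ bounded independently of $n$, so each $f^n$ induces a uniform rough isometry of $\R$ close to a translation by $T_n$, check that $n\mapsto T_n$ is a quasimorphism, hence $T_n=n\tau+O(1)$, and finally $\tau\neq0$ by nonellipticity. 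Alternatively, argue as the paper does for its Lemma \ref{lemma lox}, which is exactly statement (b): form the union of all uniform quasigeodesic lines joining the two fixed points, pass to a qi-embedded neighbourhood $Y$ of it, show $\lgl f\rgl$ acts metrically properly and coboundedly on $Y$, and conclude by the \v{S}varc--Milnor lemma.
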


\begin{defn}
Suppose $f,g$ two loxodromic isometries on a hyperbolic metric space $X$. We say that they are
{\em independent} loxodromic isometries if they have no common fixed points in $\pa X$.
\end{defn}

\begin{lemma}\label{lem-finding free qc}
Suppose $X$ is a hyperbolic metric space which may not be proper, and suppose $f,g$ are two independent loxodromic isometries
on $X$. Then there is $m\in\N$ such that the group $H=\lgl f^m,g^m\rgl \simeq \mathbb F_2$. Moreover, any orbit map
$H \map X$ is quasiisometric embedding.
\end{lemma}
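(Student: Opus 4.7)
The plan is a ping-pong argument on $\pa X$ to establish the free-product structure, followed by a broken-geodesic argument to upgrade this to a quasi-isometric embedding of the orbit map. The only care required beyond the classical proof is to handle the fact that $X$ (and therefore $\pa X$) need not be proper.

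Since $f,g$ are independent loxodromics, the four points $f^{\pm\infty}, g^{\pm\infty} \in \pa X$ are pairwise distinct. Fix a basepoint $x_0 \in X$ and let $\dl$ be the hyperbolicity constant of $X$. Choose $R$ large enough (in terms of the six mutual Gromov products $(p_i \mid p_j)_{x_0}$ of the fixed points and $\dl$) that the four ``shadows''
\[ V_\xi = \{\, y \in X : (y \mid \xi)_{x_0} \geq R \,\}, \qquad \xi \in \{f^{\pm\infty}, g^{\pm\infty}\}, \]
are pairwise disjoint. Using Lemma \ref{char loxo isom}(2) together with the isometry invariance of Gromov products and the slim-triangle inequality $(y \mid \xi)_{x_0} \geq \min\{(y\mid y')_{x_0}, (y'\mid \xi)_{x_0}\} - \dl$, one checks that for all sufficiently large $m$ each of $f^{\pm m}, g^{\pm m}$ maps the complement of its ``antipodal'' shadow into its ``own'' shadow. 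This is the ping-pong property.

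Fix such an $m$, also large enough that $d(x_0, f^m x_0)$ and $d(x_0, g^m x_0)$ exceed a threshold depending only on $\dl$ and $R$ (possible because the translation lengths of $f$ and $g$ are positive). Given a reduced word $w = a_1 \cdots a_n$ in $\{f^{\pm m}, g^{\pm m}\}$, write $y_j = a_1 \cdots a_j \cdot x_0$. By induction each $y_j$ lies in the shadow associated to $a_j$, so $y_n \neq x_0$, proving $H = \lgl f^m, g^m \rgl \isom \mathbb F_2$. For the orbit map, consider the broken geodesic obtained by concatenating $[y_{j-1}, y_j]$. The inner Gromov products satisfy
\[ (y_{j-1} \mid y_{j+1})_{y_j} = (a_j^{-1} x_0 \mid a_{j+1} x_0)_{x_0}, \]
and since the reduced-word condition $a_{j+1} \neq a_j^{-1}$ forces $a_j^{-1}x_0$ and $a_{j+1}x_0$ into distinct shadows, this product is uniformly bounded by a constant $C_0 = C_0(R, \dl)$. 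The segment lengths are uniformly at least our threshold. The standard local-to-global principle for $\dl$-hyperbolic spaces (a broken geodesic with long segments and small Gromov-product bends is a global quasi-geodesic) then yields that the concatenation is a $(K, \ep)$-quasi-geodesic with $K, \ep$ independent of $w$, so $d(x_0, w \cdot x_0)$ grows linearly in the word length. This is the required quasi-isometric embedding.

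The main obstacle is the non-properness: one cannot appeal to uniform convergence of $f^n$ on compact subsets of $\pa X \setminus \{f^{-\infty}\}$ the way one does when $X$ is proper. Instead the entire ping-pong setup must be phrased inside $X$ via the shadows $V_\xi$ and quantitative Gromov-product estimates, which is the reason for introducing the threshold on segment lengths and exploiting slim triangles in place of compactness.
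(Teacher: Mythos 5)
The paper never proves this lemma: it is stated as a known fact, with no citation and no argument, and is then used in Proposition \ref{pingpong criteria}. So there is no in-paper proof to compare yours against; what you have written is the standard ping-pong-plus-broken-geodesic argument, and it is essentially correct. Your main structural point is also the right one: phrasing everything through the shadows $V_\xi=\{y\in X:(y\mid\xi)_{x_0}\ge R\}$ and quantitative Gromov-product inequalities is exactly what removes any dependence on properness. Disjointness of the shadows needs only $R>\max_{i\ne j}(p_i\mid p_j)_{x_0}+2\dl$; the ping-pong property for $f^{\pm m},g^{\pm m}$ follows from the two facts $(f^{\mp m}x_0\mid f^{\mp\infty})_{x_0}\map\infty$ and $d(x_0,f^{\mp m}x_0)\map\infty$ (both are consequences of loxodromicity alone, e.g.\ via the identity $(f^m y\mid f^m x_0)_{x_0}=d(x_0,f^{-m}x_0)-(y\mid f^{-m}x_0)_{x_0}$, with no compactness used anywhere); and the broken-geodesic criterion is a purely metric statement. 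The identity $(y_{j-1}\mid y_{j+1})_{y_j}=(a_j^{-1}x_0\mid a_{j+1}x_0)_{x_0}$ and the uniform bound on it coming from $a_j^{-1}x_0$ and $a_{j+1}x_0$ lying in distinct shadows are correct, and they do yield a uniform quasigeodesic once $m$ is large enough that the segment lengths beat the bend constant.

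Two points to repair or make explicit. First, an indexing slip: with prefixes $y_j=a_1\cdots a_jx_0$, the ping-pong induction does \emph{not} show that ``each $y_j$ lies in the shadow associated to $a_j$''; the correct induction (run on suffixes: $a_jx_0$ lies in the shadow of $a_j$; since $a_j\neq a_{j-1}^{-1}$, that shadow misses the antipodal shadow of $a_{j-1}$, so $a_{j-1}a_jx_0$ lies in the shadow of $a_{j-1}$, and so on) shows that every $y_j$, and in particular $y_n=wx_0$, lies in the shadow of the \emph{first} letter $a_1$. Since $x_0$ lies in no shadow, the conclusion $wx_0\neq x_0$ is unaffected, and your quasigeodesic estimate never uses the misstated claim, so this is a one-line fix rather than a gap. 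Second, the ``standard local-to-global principle for broken geodesics'' carries the entire quantitative weight of the second half, so you should state it precisely (segments of length $\ge L$, Gromov products at the breakpoints $\le C_0$, with $L>2C_0+4\dl+1$, give a uniform quasigeodesic) or cite a reference for it; it is indeed standard and properness-free, so this is a presentational rather than mathematical issue.
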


We note that a hyperbolic group $G$ is said to be {\em nonelementary} if $G$ is not virtually cyclic.
\begin{lemma}\textup{(\cite[Lemma $3.10$]{bridson-haefliger}, \cite[Corollary $8.2.G$]{gromov-hypgps})}\label{lem-infinite ele loxo}
In a hyperbolic group $G$, any infinite order element $g$ acts on a Cayley graph of $G$ as a loxodromic isometry.

In a nonelementary hyperbolic group there are infinite order elements (which we call independent infinite order elements)
that induce two independent loxodromic isometries on a Cayley graph of $G$.
\end{lemma}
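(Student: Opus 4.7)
For the first assertion, let $g\in G$ be of infinite order and fix a Cayley graph $\Gamma_G$. Since $\Gamma_G$ is locally finite, every metric ball contains finitely many vertices, so the orbit $\{g^n\cdot 1 : n\in\Z\}$ cannot stay bounded. By subadditivity of $n\mapsto d(1,g^n)$, the stable translation length
$$\tau(g):=\lim_{n\to\infty}\frac{d(1,g^n)}{n}$$
exists. The classical theorem of Gromov (see, e.g., \cite[III.$\Gamma$.3.17]{bridson-haefliger}) guarantees that $\tau(g)>0$ for every infinite-order element of a hyperbolic group. Combining the lower bound $d(1,g^n)\geq n\tau(g)$ with the trivial upper bound $d(1,g^n)\leq n\,d(1,g)$ shows that the orbit map $n\mapsto g^n$ is a quasi-isometric embedding of $\Z$ into $\Gamma_G$, which is precisely the defining condition for $g$ to act as a loxodromic isometry on $\Gamma_G$.

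For the second assertion, first observe that a nonelementary hyperbolic group $G$ is infinite, and since a hyperbolic group has only finitely many conjugacy classes of torsion elements, it must contain an element $g_1$ of infinite order. By the first part, $g_1$ is loxodromic with fixed points $\xi^\pm:=g_1^{\pm\infty}\in\pa G$. Let $E\leq G$ denote the setwise stabilizer of $\{\xi^+,\xi^-\}$; this is the maximal elementary (virtually cyclic) subgroup containing $g_1$. If $E=G$, then $G$ would be virtually cyclic, contradicting the hypothesis; hence there exists $h\in G\setminus E$, so that $h\cdot\{\xi^+,\xi^-\}\neq\{\xi^+,\xi^-\}$. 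Set $g_2:=hg_1h^{-1}$, another loxodromic element, with fixed point set $h\cdot\{\xi^\pm\}$ sharing at most one point with $\{\xi^\pm\}$. If there is exactly one point of overlap, a short auxiliary step (replacing $h$ by $h\cdot g_1^{\pm N}$ for a large power, or by picking a second element $h'\in G\setminus E$ whose action on $\pa G$ moves the shared fixed point to something new, which is possible because otherwise all of $G\setminus E$ would fix one boundary point, again forcing $G$ to be elementary) produces a conjugate whose fixed point set is disjoint from $\{\xi^+,\xi^-\}$. The resulting pair $g_1,g_2$ is then the desired pair of independent infinite-order elements.

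\textbf{Main obstacle.} The crucial nontrivial input is Gromov's theorem that infinite-order elements of hyperbolic groups have positive stable translation length; without this, one cannot rule out the possibility of distorted cyclic subgroups and hence parabolic-like behaviour. The remaining steps, in particular the separation-of-fixed-points argument on $\pa G$, are routine consequences of the dichotomy encapsulated in Lemma \ref{char loxo isom} together with the hypothesis of nonelementarity.
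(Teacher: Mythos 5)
The paper offers no proof of this lemma at all: it is quoted from the literature (Bridson--Haefliger, Gromov), so your write-up can only be measured against the standard arguments. Your first paragraph is fine as a reduction: granting the theorem that infinite-order elements of hyperbolic groups have positive stable translation length, subadditivity gives $\tau(g)\,|n|\le d(1,g^n)\le |n|\,d(1,g)$, i.e. the orbit map is a qi embedding of $\Z$, which is exactly the paper's definition of a loxodromic isometry. Be aware, though, that the positivity of $\tau(g)$ is essentially equivalent to the assertion being proved, so this part is a citation dressed as a proof --- the same move the paper itself makes, and acceptable for a quoted background lemma.

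The second part, however, contains a concrete error. Your main line --- take a loxodromic $g_1$, let $E$ be the setwise stabilizer of $\{\xi^+,\xi^-\}$, pick $h\in G\setminus E$ (possible since $G$ is not virtually cyclic), and set $g_2=hg_1h^{-1}$ --- is the standard route. But your repair of the ``exactly one shared fixed point'' case fails: replacing $h$ by $hg_1^{\pm N}$ changes nothing, since $(hg_1^N)\,g_1\,(hg_1^N)^{-1}=hg_1h^{-1}=g_2$, so this produces the identical conjugate; and your fallback negation (``otherwise all of $G\setminus E$ would fix one boundary point'') is not the correct negation of ``every $h'\in G\setminus E$ gives an overlap,'' since different $h'$ may each share a different single point with $\{\xi^+,\xi^-\}$. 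Fortunately, the case you are trying to repair is empty: in a hyperbolic group the fixed-point sets of two infinite-order elements are either equal or disjoint. This is recorded in this paper as Proposition \ref{qc prop}(5), and follows from the limit-set intersection property for quasiconvex subgroups applied to the cyclic subgroups $\langle g_1\rangle$ and $\langle g_2\rangle$: if their limit sets shared exactly one point, then $\langle g_1\rangle\cap\langle g_2\rangle$ would have nonempty limit set, hence be infinite, hence of finite index in both cyclic groups, forcing the two limit sets to coincide --- a contradiction. Since $h\notin E$ guarantees $h\{\xi^\pm\}\neq\{\xi^\pm\}$, disjointness is automatic and $g_1,g_2$ are independent. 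Replace your auxiliary step by this observation and the argument is complete.
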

The following lemma is quite standard. But we could not find an explicit reference. Therefore, we
give a sketch of proof for it.

\begin{lemma}\label{lemma lox}
Suppose $f$ is an isometry of a hyperbolic metric space $X$ such that $f$ is not an elliptic
isometry and $f$ has at least two fixed points in $\partial X$. Then $f$ is loxodromic.
\end{lemma}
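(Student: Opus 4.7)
The plan is to reduce the claim to Lemma~\ref{char loxo isom}(b), which concludes loxodromicity from non-ellipticity together with the condition of having \emph{precisely} two fixed points in $\pa X$. Since the hypothesis gives at least two fixed points, the task reduces to ruling out the possibility of three or more. So I will assume for contradiction that $f$ has three distinct fixed points $\xi_1,\xi_2,\xi_3\in\pa X$ and will produce a bounded orbit, contradicting non-ellipticity.

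For each pair $\{i,j\}$ I will construct a bi-infinite quasigeodesic line $\gamma_{ij}$ joining $\xi_i$ and $\xi_j$ by concatenating two quasigeodesic rays from a common basepoint -- the standard way of realising a pair of distinct boundary points by a quasigeodesic line in a $\delta$-hyperbolic space. Since $f$ is an isometry fixing each vertex, $f^n(\gamma_{ij})$ is another quasigeodesic between the same endpoints with the same quasigeodesic constants; stability of quasigeodesics then gives a uniform $D$, depending only on $\delta$ and these constants, with $Hd(\gamma_{ij},f^n(\gamma_{ij}))\le D$ for every $n\in\Z$ and every pair $ij$. A classical consequence of the thin-triangles condition is that the ideal triangle with sides $\gamma_{12},\gamma_{13},\gamma_{23}$ has a non-empty bounded ``centre'' $R$ of diameter $\le K=K(\delta)$, with each point of $R$ lying within $K$ of every side; moreover, the enlarged triple intersection $R':=\bigcap_{ij}N_{K+D}(\gamma_{ij})$ is still bounded (of diameter at most some $K'(\delta,D)$), because the $(K{+}D)$-neighbourhoods of the three quasigeodesic sides of an ideal triangle with pairwise distinct endpoints have bounded triple intersection.

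Now I will pick $p\in R$. For every $n\in\Z$ and every pair $ij$, choosing $q\in\gamma_{ij}$ with $d(p,q)\le K$, the isometry property gives
$d(f^n(p),\gamma_{ij})\le d(f^n(p),f^n(q))+d(f^n(q),\gamma_{ij})\le K+D$,
so the orbit $\{f^n(p)\}_{n\in\Z}$ lies entirely inside $R'$ and is therefore bounded. Hence $f$ is elliptic, contradicting the hypothesis. This forces $f$ to have exactly two fixed points in $\pa X$, after which Lemma~\ref{char loxo isom}(b) finishes the proof. The main technical hurdle I foresee is the careful verification, in a possibly non-proper $\delta$-hyperbolic space, of the boundedness of $R'$; this is standard via a cusp-excluding argument -- points in $R'$ must be bounded away from each ideal vertex along each side, since the opposite side stays uniformly far from that vertex -- with all constants depending only on $\delta$ and the quasigeodesic parameters.
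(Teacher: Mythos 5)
Your proposal is correct, but it takes a genuinely different route from the paper's. The paper never passes through Lemma~\ref{char loxo isom}(b): it works directly with the two given fixed points $\xi,\xi'$, forms the union $A$ of all uniform quasigeodesic lines joining them (a quasiconvex set), takes a qi-embedded neighbourhood $Y$ of $A$ with its induced path metric, and observes that $\lgl f\rgl$ acts metrically properly and coboundedly on $Y$, so that a version of the \v{S}varc--Milnor lemma makes the orbit map of $\Z$ a qi embedding, i.e.\ $f$ is loxodromic. You instead treat Lemma~\ref{char loxo isom}(b) as a black box and reduce to it by showing that a nonelliptic isometry cannot fix three points of $\pa X$: the coarse centre of the ideal triangle on $\xi_1,\xi_2,\xi_3$ is nonempty, its fattening $R'$ is bounded, and $R'$ absorbs the entire $f$-orbit of a centre point by Morse stability, contradicting nonellipticity. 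Your two technical claims are indeed standard and hold without properness (boundedness of $R'$ follows, e.g., from a Gromov-product pigeonhole: for $p,q\in R'$ all three products $(\xi_i\cdot\xi_j)_p$, $(\xi_i\cdot\xi_j)_q$ are small, so some common index $i$ has both $(\xi_i\cdot q)_p$ and $(\xi_i\cdot p)_q$ bounded, which bounds $d(p,q)$), and your use of Lemma~\ref{char loxo isom}(b) is not circular, since the paper states it earlier as a known fact and does not derive it from Lemma~\ref{lemma lox}. The trade-off is worth noting: the paper's argument is self-contained in that it actually produces the quasigeodesic axis, whereas Lemma~\ref{char loxo isom}(b) is stated in the paper without proof or reference and is nearly the same assertion as Lemma~\ref{lemma lox} (with ``at least two'' replaced by ``precisely two''), so the honest new content of your proof is only the finiteness-of-fixed-points step; in exchange, your triangle argument is more elementary (thin triangles plus stability, no \v{S}varc--Milnor) and avoids the properness/coboundedness verification that the paper's sketch leaves to the reader.
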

{\em Sketch of proof:} Let $\xi, \xi'\in \pa X$ be two points fixed by $f$.
We look at the union of all (uniform) quasigeodesics lines (see \cite[Lemma 2.4]{pranab-mahan})
in $X$ connecting $\xi, \xi'$. This is a quasiconvex set, say $A$, in $X$.
A small neighbourhood of $A$ in $X$ is path connected and with the induced path metric
from $X$ it is qi embedded in $X$ (see \cite[Lemma 1.97]{ps-kap}). Let $Y$ denote this
small neighbourhood of $A$ in $X$.
One then sees that the infinite cyclic group $\lgl f\rgl$ acts metrically properly and
coboundedly on $Y$ and completes the proof by using a version of Švarc--Milnor lemma
 (see \cite[Lemma 1.32]{ps-kap}).
\qed

\subsubsection{Limit sets}
\begin{defn}
Suppose $X$ is a hyperbolic metric space and $A\subset X$.
A point $\xi\in \pa X$ is called a {\em limit point} of $A$ if there is a sequence of points
$\{x_n\}$ in $A$ such that $\lim_{n\map \infty} x_n =\xi$ (in the topology of $\bar{X}$).

The limit set of $A$ in $\pa X$ is the set of all limit points of $A$.

If $\mathcal G$ is a group of isometries on $X$ then the limit set of $\mathcal G$
is the limit set of any orbit $\mathcal G.x$ where $x\in X$.

If $G$ is a hyperbolic group and $A\subset G$, $\Lambda_G(A)$ or simply $\Lambda(A)$
denotes the limit set of $A$ in $\pa G$ where $\pa G$ is understood to be the boundary
of a Cayley graph of $G$.
\end{defn}
It is not hard to see that $\Lambda_G(X)$ in the above definition is independent of $x\in X$.
The following are some common properties of limit sets of groups of isometries.

The following Lemma \ref{limit set basic} $(1)$ and $(2)$ are straightforward consequences of the version of Arzelà--Ascoli theorem as in \cite[Lemma $3.10$, I.3]{bridson-haefliger}; see the proof of \cite[Lemma $3.1$, III.H]{bridson-haefliger} for instance. Moreover, $(3)$ easily follows from $(2)$.

\begin{lemma}  \label{limit set basic}
(1) If $X$ is a proper hyperbolic metric space then for any set $A\subset X$,
$\Lambda_X(A)\neq \emptyset$ if and only if $A$ has infinite diameter.

In particular, for a hyperbolic group $G$, and a subset $A\subset G$,
$\Lambda_G(A)\neq \emptyset $ if and only if $A$ is infinite.

(2) Suppose $X$ is any hyperbolic metric space and $A,B$ are two subsets of $X$.
If $Hd(A,B)<\infty$ then $\Lambda_X(A)=\Lambda_X(B)$.

(3) Suppose $G$ is a hyperbolic group. Then the following hold:

(i) If $K<H$ are some subgroups of $G$ then $[H:K]<\infty$ implies
$\Lambda(K)=\Lambda(H)$.

(ii) For any $H<G$ and $x\in G$, $x\Lambda(H)=\Lambda(xH)=\Lambda(H^x)$ where $H^x$ denotes the conjugate of $H$ by $x$.
\end{lemma}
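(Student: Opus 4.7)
The three assertions are essentially corollaries of two facts: convergence of sequences to the boundary is detected by unbounded distance in a proper space, and it is preserved under bounded perturbation. I would organise the argument as three short steps, exactly mirroring the statement.

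For part (1), the forward direction is immediate: if $\xi\in\Lambda_X(A)$ is witnessed by $x_n\in A$ with $x_n\to\xi$, then any basepoint $x_0\in X$ must satisfy $d(x_0,x_n)\to\infty$, since otherwise a subsequence would lie in a compact ball and could not approach a boundary point. For the converse, assume $A$ has infinite diameter in the proper hyperbolic space $X$. Pick $x_0\in X$ and a sequence $x_n\in A$ with $d(x_0,x_n)\to\infty$, and for each $n$ choose a geodesic $\al_n:[0,d(x_0,x_n)]\map X$ from $x_0$ to $x_n$. The maps $\al_n$ are $1$-Lipschitz and share the basepoint $\al_n(0)=x_0$, so by the Arzelà–Ascoli lemma cited in \cite[Lemma 3.10, I.3]{bridson-haefliger} (which applies because $X$ is proper), a subsequence converges uniformly on compact sets to a geodesic ray $\al:[0,\infty)\map X$. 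Then $\al(\infty)\in\pa X$ is a limit point of $A$. The hyperbolic group case is the specialisation to $X=\Gamma_G$, which is a locally finite graph and hence proper; for a subset $A\sse G$, infinite cardinality is equivalent to infinite diameter because balls in $\Gamma_G$ are finite.

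For part (2), suppose $Hd(A,B)=R<\infty$ and $\xi\in\Lambda_X(A)$, with $\xi$ realised by a sequence $x_n\in A$, $x_n\to\xi$. Choose $y_n\in B$ with $d(x_n,y_n)\le R+1$. Since $d(x_0,x_n)\to\infty$ and the two sequences stay at uniformly bounded distance, the usual Gromov product estimate gives $(x_n\cdot y_n)_{x_0}\to\infty$, so $y_n$ and $x_n$ have the same limit in $\bar X$; hence $\xi\in\Lambda_X(B)$. Swapping the roles of $A$ and $B$ gives the reverse inclusion.

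Part (3) now reduces to (2) together with the fact that $G$ acts on $\Gamma_G$ by isometries, so left translation by $x\in G$ extends to a homeomorphism of $\bar\Gamma_G$ that sends $H$ to $xH$, giving $x\,\Lambda(H)=\Lambda(xH)$. For (i), a finite index subgroup $K<H$ admits a finite set of right coset representatives $h_1,\dots,h_k\in H$, whence $H=\bigcup_i Kh_i\sse N_R(K)$ for $R=\max_i|h_i|$, so $Hd(H,K)<\infty$ and (2) yields $\Lambda(H)=\Lambda(K)$. For (ii), we already have $x\Lambda(H)=\Lambda(xH)$; the remaining equality $\Lambda(xH)=\Lambda(H^x)$ follows because $xH=(xHx^{-1})x=H^x\cdot x$, so $Hd(xH,H^x)\le|x|<\infty$, and (2) applies. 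No step is really the obstacle here, but the only point that benefits from care is the Arzelà–Ascoli application in part (1), since one must remember that properness of $X$ is exactly what makes the diagonal subsequence extraction work.
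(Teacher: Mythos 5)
Your proof is correct, and its overall skeleton is the one the paper intends: the paper disposes of this lemma with a one-line citation, attributing (1) and (2) to the Arzel\`a--Ascoli argument of \cite[Lemma 3.10, I.3]{bridson-haefliger} and deducing (3) from (2). Your parts (1) and (3) follow that outline exactly. Where you genuinely diverge is part (2): rather than invoking Arzel\`a--Ascoli again, you use the elementary Gromov-product estimate $(x_n\cdot y_n)_{x_0}\ge d(x_0,x_n)-(R+1)\to\infty$ to show that uniformly close sequences converge to the same boundary point. This divergence is in your favor: part (2) is stated for an \emph{arbitrary} hyperbolic space, where closed balls need not be compact, so an Arzel\`a--Ascoli/limiting-geodesic argument is not literally available there (this is precisely why the paper's own convention switches to the quasigeodesic boundary for non-proper spaces such as coned-off Cayley graphs); your sequential argument needs no properness at all. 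One small point to tighten in part (1): after extracting the limiting ray $\alpha$, you should say explicitly why $\alpha(\infty)$ is a limit of points of $A$ rather than merely a point of $\partial X$ --- for instance, since $\alpha_{n_k}$ is a geodesic from $x_0$ ending at $x_{n_k}$ and passing within distance $1$ of $\alpha(R)$ for large $k$, one gets $(x_{n_k}\cdot\alpha(R))_{x_0}\ge R-1$, hence $x_{n_k}\to\alpha(\infty)$ and so $\alpha(\infty)\in\Lambda_X(A)$. This is routine, but it is the step that turns the Arzel\`a--Ascoli output into membership in the limit set.
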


\begin{lemma}\textup{(\cite[Theorem $5.1$]{coornaert-mea})}\label{limit set minimal}
(1) Suppose $X$ is a hyperbolic metric space, and $\mathcal G$ is a group acting
properly discontinuously by isometries on $X$. Then $\Lambda_X(\mathcal G)$ is the
minimal, closed, $\mathcal G$-invariant subset of $\pa X$.

(2) Suppose $G$ is a hyperbolic group and $H$ is an infinite subgroup of $G$.
Then $\Lambda_G(H)$ is the minimal, closed, $H$-invariant subset of $\pa G$.

In particular, any $G$-orbit on $\pa G$ is dense in $\pa G$.
\end{lemma}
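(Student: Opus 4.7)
The plan is to prove part (1), and part (2) then follows as the special case $X = \Gamma_G$, $\mathcal G = H$ (since $H$ acts properly discontinuously on the Cayley graph $\Gamma_G$ by isometries). The ``in particular'' statement is the subcase $H = G$: by Lemma \ref{limit set basic}(1), $\Lambda_G(G) = \pa G$, so minimality says every $G$-orbit on $\pa G$ is dense. For (1), invariance is routine: if $\xi = \lim_{n \to \infty} g_n.x$ with $g_n \in \mathcal G$ and $x \in X$, then for any $g \in \mathcal G$, continuity of the extension $\bar g : \bar X \to \bar X$ (from the earlier lemma on extending isometries) gives $g.\xi = \lim_{n \to \infty} (g g_n).x \in \Lambda_X(\mathcal G)$. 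Closedness is a standard diagonal argument: given $\xi_k \to \xi$ with $\xi_k \in \Lambda_X(\mathcal G)$, choose $g_{k,n}.x \to \xi_k$ as $n \to \infty$ and extract a diagonal subsequence.

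The main work is minimality. Let $C \subseteq \pa X$ be nonempty, closed, and $\mathcal G$-invariant; fix $\eta \in C$ and an arbitrary $\xi \in \Lambda_X(\mathcal G)$. I want to produce $g_n \in \mathcal G$ with $g_n.\eta \to \xi$; closedness and $\mathcal G$-invariance of $C$ then force $\xi \in C$. Pick a base point $x \in X$ and $h_n \in \mathcal G$ with $h_n.x \to \xi$. Consider a quasigeodesic ray $\alpha$ from $x$ toward $\eta$; its translate $h_n.\alpha$ is a quasigeodesic ray from $h_n.x$ to $h_n.\eta$. Hyperbolicity (thin triangles and stability of quasigeodesics) suggests that $h_n.\eta \to \xi$, provided $h_n$ does not ``rotate'' $\eta$ toward a wrong direction near $h_n.x$. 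The fix is to precompose by a large power of a loxodromic element $f \in \mathcal G$: such an $f$ exists because $\Lambda_X(\mathcal G) \neq \emptyset$ forces $\mathcal G$ to contain infinite-order elements acting loxodromically (identified via the fixed-point characterisation of Lemma \ref{lemma lox}). By the north-south dynamics of $f$ (Lemma \ref{char loxo isom}), for suitable exponents $k_n \to \infty$ the points $f^{k_n}.\eta$ cluster near the attracting fixed point $f^{+\infty}$, and the hyperbolicity argument applied to $h_n f^{k_n}$ then delivers $h_n f^{k_n}.\eta \to \xi$.

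The main obstacle is precisely this last dynamical perturbation step: showing, via careful choice of $k_n$ and possibly a second independent loxodromic element to run a ping-pong argument, that the modified sequence actually converges to $\xi$ rather than accumulating at a spurious boundary point. This is the content of Theorem 5.1 of \cite{coornaert-mea} in the proper setting; since the Cayley graph $\Gamma_G$ is proper, the cited result directly suffices for our application in part (2).
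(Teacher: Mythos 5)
First, a point of comparison: the paper does not prove this lemma at all --- it is quoted directly from \cite[Theorem $5.1$]{coornaert-mea} --- so your closing move of deferring the hard step to that same citation is, in effect, exactly what the paper does, and your sketch therefore has to be judged as a standalone argument. As such, it has two concrete gaps. (a) The claim that $\Lambda_X(\mathcal G)\neq\emptyset$ forces $\mathcal G$ to contain a loxodromic element is false in the generality of part (1): a parabolic group (say a parabolic $\Z$ inside a Kleinian group acting on $\mathbb{H}^3$) acts properly discontinuously with a one-point limit set and contains no loxodromic, and Lemma \ref{lemma lox} cannot manufacture one, since it requires an isometry with at least two fixed boundary points. (In the setting of part (2), infinite subgroups of hyperbolic groups do contain loxodromics, but that is a separate fact --- infinite-order elements exist and act loxodromically by Lemma \ref{lem-infinite ele loxo} --- not a consequence of $\Lambda\neq\emptyset$.) (b) Your north--south perturbation never addresses the case $\eta=f^{-\infty}$: the repelling fixed point is fixed by every power of $f$, so $f^{k_n}.\eta=\eta$ does not cluster near $f^{+\infty}$, and Lemma \ref{char loxo isom}(a)(2) explicitly excludes this point. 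This is not a removable technicality, because it is exactly where the statement, read literally, fails: for $\mathcal G=\lgl f\rgl$ with $f$ loxodromic (equivalently, $H$ infinite cyclic in part (2)), the singleton $\{f^{+\infty}\}$ is a nonempty, closed, invariant subset of $\pa X$ that does not contain $\Lambda=\{f^{\pm\infty}\}$. The source theorem carries a non-elementarity hypothesis that the statement above suppresses, and any correct proof must use it --- e.g.\ to produce a second, independent loxodromic and first move $\eta$ off the offending fixed point.

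Beyond these, the step you yourself flag as the main obstacle --- that $h_nf^{k_n}.\eta\to\xi$ --- is the entire analytic content of the theorem, and it is nowhere carried out: there is no thin-triangle estimate, no control of where the translated rays fellow-travel, no uniformity in $n$. Appealing to \cite[Theorem $5.1$]{coornaert-mea} for it makes the proposal circular as a proof, since that theorem \emph{is} the statement being proved (in the proper case). What does survive is the formal scaffolding: the reduction of (2) to (1) by specializing to $X=\Gamma_G$, $\mathcal G=H$, and the derivation of the ``in particular'' clause from (2) with $H=G$ --- though note that Lemma \ref{limit set basic}(1) only yields $\Lambda_G(G)\neq\emptyset$; the equality $\Lambda_G(G)=\pa G$ requires the easy but separate observation that every boundary point is the limit of the group elements lying along a geodesic ray from the identity.
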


\section{Quasiconvex subgroups of hyperbolic groups}\label{sec-qc subgroups}
In this section we recall various results on quasiconvex subgroups of hyperbolic groups.
We recall that in a hyperbolic metric space $X$, a subset $A$ is called $k$-{\em quasiconvex}
for some $k\geq 0$ if for all $x,y\in A$ and all geodesic
$\gamma$ in $X$ joining $x,y$ we have $\gamma \subset N_k(A)$. When the constant $k$ is not
important, we say that $A$ is quasiconvex in $X$.
A subgroup $H$
of a hyperbolic group $G$ is said to be quasiconvex if it is quasiconvex in some (any)
Cayley graph of $G$. The following proposition summarizes some of the basic properties
of quasiconvex subgroups of hyperbolic groups which will be needed later.

\begin{prop}\label{qc prop}
Suppose $G$ is a hyperbolic group.
\begin{enumerate}
\item \textup{(\cite[Lemma $3.6$]{bridson-haefliger})} A subgroup $H$ is quasiconvex if and only if $H$ is a finitely generated, qi embedded subgroup of $G$.
In this case, $H$ is also hyperbolic. Quasiconvexity is independent of generating sets of $H$
and $G$.
\item Finite subgroups of $G$ are quasiconvex in $G$. Also, finite index subgroups of $G$
are quasiconvex in $G$.
\item Any cyclic subgroup of $G$ is qi embedded and hence quasiconvex.
If $g\in G$ is of infinite order then $g^{\pm \infty}=\lim_{n\map \pm \infty}g^n$ are the
unique fixed points of $g$ in $\pa G$ and $\Lambda_G(\lgl g\rgl)=\{g^{\pm \infty}\}$.
\item \textup{(\cite{GMRS})} If $H$ is quasiconvex in $G$ then $H$ has finite index in $Comm_G(H)$.
\item If $g,h\in G$ are two infinite order elements then either $\Lambda_G(\lgl g\rgl)=\Lambda_G(\lgl h\rgl)$
or $\Lambda_G(\lgl g\rgl)\cap \Lambda_G(\lgl h\rgl)=\emptyset$.

\item If $g,h \in G$ are two infinite order elements with different fixed points in $\pa G$,
then there is $n\in \N$ such that $\lgl g^n, h^n\rgl\isom \mathbb F_2$ is a qi embedded
subgroup of $G$.

If $G$ is nonelementary there are such pairs of infinite order elements.
\item Suppose $K<H<G$. If $H$ is quasiconvex in $G$ and $K$ is quasiconvex in $H$ then $K$
is quasiconvex in $G$.

If $K$ has finite index in $H$ then $K$ is quasiconvex in $G$ if and only if $H$ is quasiconvex in $G$.
\item \textup{(cf. \cite[Proposition $3$]{short}, \cite[Lemma $2.6$]{GMRS})}
Suppose $H_1, H_2$ are two quasiconvex subgroup of $G$. Then so is $H_1\cap H_2$.

Moreover, in this case, $\Lambda_G(H_1)\cap \Lambda_G(H_2)=\Lambda_G(H_1\cap H_2)$.
\item \textup{(\cite[Theorem $1.7$, III.H]{bridson-haefliger})}
Given $k\geq 0$ and $\delta \geq 0$ there is $D\geq 0$ such that the following holds:

Suppose $H$ is a $k$-quasiconvex subgroup of a $\delta$-hyperbolic group $G$ and
$\xi_1, \xi_2\in \Lambda_G(H)$ are two distinct points. Then for any geodesic line $\gamma$
in $\Gamma_G$ joining $\xi_1, \xi_2$, $\gamma \subset N_D(H)$.

\item \textup{(\cite{GMRS}[Lemma $2.9$])}
Suppose $G$ is a hyperbolic group and $H$ is a quasiconvex subgroup of $G$ such that
$\Lambda_G(H)=\pa G$. Then $H$ has finite index in $G$.
\end{enumerate}
\end{prop}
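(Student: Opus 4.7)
My plan is to treat items $(1)$, $(4)$, $(8)$, $(9)$ as direct appeals to the references cited next to each, and to handle the remaining items using the boundary and loxodromic machinery of Section \ref{sec-pre on hyp geo}. Item $(2)$ is immediate: a finite subgroup $H$ has bounded diameter, so any geodesic between two of its elements stays in a fixed ball around $H$; a finite index subgroup has finite Hausdorff distance from $G$ inside $\Gamma_G$ and is therefore qi embedded, hence quasiconvex by $(1)$. For item $(3)$, Lemma \ref{lem-infinite ele loxo} makes each infinite order $g$ act loxodromically on $\Gamma_G$, so the orbit map $n \mapsto g^n$ is a qi embedding; Lemma \ref{char loxo isom} identifies $g^{\pm \infty}$ as the two boundary fixed points, and Lemma \ref{limit set basic}$(2)$ gives $\Lambda_G(\lgl g \rgl) = \{g^{\pm \infty}\}$. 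Item $(7)$ is the observation that a composition of qi embeddings is again a qi embedding by $(1)$, with the finite index statement following from $(2)$ via the same transitivity.

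For item $(5)$, $g$ and $h$ are loxodromic by Lemma \ref{lem-infinite ele loxo}, so $\Lambda_G(\lgl g\rgl) = \{g^{\pm\infty}\}$ and likewise for $h$. If these two-point sets shared exactly one point, a standard ping-pong argument using iterates of $g$ applied to the fixed points of $h$ via Lemma \ref{char loxo isom}$(2)$ would produce an infinite order element of $G$ with at most one fixed point in $\partial G$, contradicting Lemma \ref{lem-infinite ele loxo}. Item $(6)$ is Lemma \ref{lem-finding free qc} applied to the pair $g, h$ of independent loxodromics, combined with Lemma \ref{lem-infinite ele loxo} for the existence of such a pair when $G$ is nonelementary.

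The substantial item is $(10)$. First dispose of the elementary cases: if $G$ is finite then $[G:H]$ is trivially finite; if $G$ is virtually $\Z$, then $\partial G$ has two points, so $\Lambda_G(H) = \partial G$ forces $H$ infinite by Lemma \ref{limit set basic}$(1)$, hence of finite index in the virtually cyclic group $G$. So assume $G$ is nonelementary. By $(9)$ there is $D \ge 0$ such that every bi-infinite geodesic in $\Gamma_G$ with endpoints in $\partial G = \Lambda_G(H)$ lies in $N_D(H)$. Nonelementarity and visibility in the proper hyperbolic space $\Gamma_G$ furnish a bi-infinite geodesic $\gamma_0$ with endpoints $\xi_1, \xi_2 \in \partial G$; since $\gamma_0$ traverses vertices of $\Gamma_G$ and $G$ acts transitively on vertices by left multiplication, I may translate to assume $\gamma_0$ passes through the identity vertex $e$. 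For any $g \in G$, the translate $g \cdot \gamma_0$ is then a bi-infinite geodesic through $g$ with endpoints $g \cdot \xi_i \in \partial G = \Lambda_G(H)$, so $(9)$ yields $d(g, H) \le D$. Thus every coset $gH$ has a representative in the finite ball $B(e, D)$, forcing $[G : H] \le |B(e, D)| < \infty$. The main obstacle is the visibility step — producing a bi-infinite geodesic between any two distinct points of $\partial G$ — which is a standard consequence of properness of $\Gamma_G$, and the remaining cocompactness/translation argument is routine.
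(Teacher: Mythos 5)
The paper gives no proof of this proposition at all---it is a compendium of standard facts, with citations attached to the items that need them---so your write-up has to stand on its own. Most of it does: deferring items (1), (4), (8), (9) to the references is exactly what the paper does; your arguments for (2), (3), (6), (7) are the standard ones and are correct; and your proof of item (10) is complete and sound (reduction to the nonelementary case, item (9) applied to the translates $g\cdot\gamma_0$ of one bi-infinite geodesic through the identity, and the conclusion $G=H\cdot B(e,D)$ with $B(e,D)$ finite by local finiteness of $\Gamma_G$).

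The genuine gap is item (5). Ping-pong is precisely the tool that is unavailable there: it needs disjoint attracting/repelling neighbourhoods for the two isometries, which is exactly what fails when $\{g^{\pm\infty}\}$ and $\{h^{\pm\infty}\}$ share a point. Worse, the contradiction you aim for is unreachable in principle: by Lemmas \ref{lem-infinite ele loxo} and \ref{char loxo isom}, \emph{every} infinite-order element of $G$ already has exactly two fixed points in $\pa G$, so no construction can ``produce an infinite order element with at most one fixed point''; in all correct proofs the contradiction is of a different kind (failure of properness of the action, or the discovery of a common power). Two standard repairs, both available inside your own framework: (a) since $\lgl g\rgl$ and $\lgl h\rgl$ are quasiconvex by item (3), item (8)---which you already take from the references---gives $\Lambda_G(\lgl g\rgl)\cap\Lambda_G(\lgl h\rgl)=\Lambda_G(\lgl g\rgl\cap\lgl h\rgl)$; if this set is nonempty then $\lgl g\rgl\cap\lgl h\rgl$ is infinite by Lemma \ref{limit set basic}(1), so $g^m=h^n\neq 1$ for some $m,n$, and then Lemma \ref{limit set basic}(3)(i) yields $\Lambda_G(\lgl g\rgl)=\Lambda_G(\lgl g^m\rgl)=\Lambda_G(\lgl h^n\rgl)=\Lambda_G(\lgl h\rgl)$. (b) Alternatively, if (after replacing $h$ by $h^{-1}$ if necessary) $g^{\infty}=h^{\infty}$, the quasigeodesic rays $n\mapsto g^n$ and $n\mapsto h^n$ emanating from $e$ are at finite Hausdorff distance $R$, so by pigeonhole on the finite ball $B(e,R)$ there are $n_1\neq n_2$ and $m_1,m_2$ with $g^{n_1}=h^{m_1}u$ and $g^{n_2}=h^{m_2}u$ for the same $u\in B(e,R)$, whence $g^{n_1-n_2}=h^{m_1-m_2}\neq 1$ is a common power and the two limit sets coincide.
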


\begin{lemma}\label{prop-finite-union-nowheredense}
Let $G$ be a nonelementary hyperbolic group. Let $H$ be an infinite index, quasiconvex
subgroup of $G$. Then $\Lambda_G(H)$ is nowhere dense in $\pa G$.
\end{lemma}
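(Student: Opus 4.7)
The plan is to argue by contradiction. Since limit sets are closed in $\partial G$, if $\Lambda_G(H)$ is not nowhere dense then it contains a nonempty open subset $U$ of $\partial G$. As $H$ has infinite index in $G$, Proposition \ref{qc prop}(10) rules out $\Lambda_G(H) = \partial G$, so $V := \partial G \setminus \Lambda_G(H)$ is also a nonempty open set, disjoint from $U$.

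The decisive step is to produce a loxodromic element $g \in G$ with $g^{\infty} \in U$ and $g^{-\infty} \in V$. To do so I would start with two independent loxodromic elements $a_0, b_0 \in G$ furnished by Proposition \ref{qc prop}(6). Using density of $G$-orbits on $\partial G$ (Lemma \ref{limit set minimal}), choose $h_1, h_2 \in G$ so that the conjugates $a := h_1 a_0 h_1^{-1}$ and $b := h_2 b_0 h_2^{-1}$ satisfy $a^{\infty} = h_1 a_0^{\infty} \in U$ and $b^{-\infty} = h_2 b_0^{-\infty} \in V$. The choices $h_1, h_2$ forcing a shared fixed point between $a$ and $b$ cut out only finitely many cosets of boundary-point stabilizers, so one can keep $a, b$ independent loxodromics. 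Then by the ping-pong construction (Lemma \ref{lem-finding free qc}) combined with the north-south dynamics of loxodromic isometries (Lemma \ref{char loxo isom}), for all sufficiently large $m$ the element $g := a^m b^m$ is loxodromic with $g^{\infty}$ arbitrarily close to $a^{\infty}$ and $g^{-\infty}$ arbitrarily close to $b^{-\infty}$; for $m$ large enough this forces $g^{\infty} \in U$ and $g^{-\infty} \in V$.

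Granted such a $g$, the contradiction is immediate. By Proposition \ref{qc prop}(3) the cyclic subgroup $\langle g \rangle$ is quasiconvex with $\Lambda_G(\langle g \rangle) = \{g^{\infty}, g^{-\infty}\}$. Applying Proposition \ref{qc prop}(8) to the quasiconvex subgroups $H$ and $\langle g \rangle$ gives
\[
\Lambda_G(H \cap \langle g \rangle) \;=\; \Lambda_G(H) \cap \Lambda_G(\langle g \rangle) \;=\; \{g^{\infty}\},
\]
since $g^{\infty} \in U \subset \Lambda_G(H)$ while $g^{-\infty} \in V$ lies outside $\Lambda_G(H)$. However, $H \cap \langle g \rangle$ is a subgroup of the infinite cyclic group $\langle g \rangle$, hence either trivial (empty limit set) or of the form $\langle g^n \rangle$ with $n \geq 1$ (limit set $\{g^{\infty}, g^{-\infty}\}$); in either case the limit set is not a singleton, a contradiction. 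The main technical obstacle in this plan is the ping-pong construction of $g$ with its two fixed points in the prescribed open sets $U$ and $V$; everything else follows formally from the quasiconvexity package assembled in Proposition \ref{qc prop}.
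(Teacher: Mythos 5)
Your proposal is correct in outline, but it follows a genuinely different route from the paper's proof. The paper argues the contrapositive with softer tools: if $U\subset\Lambda_G(H)$ is a nonempty open set, then $\bigcup_{h\in H}hU$ is an open, $H$-invariant subset of $\Lambda_G(H)$, hence equals $\Lambda_G(H)$ by minimality of the limit set (Lemma \ref{limit set minimal}(2)); so $\Lambda_G(H)$ itself is open, and then the north--south dynamics of a \emph{single} infinite-order element $h\in H$ (Lemma \ref{char loxo isom}(2)) pushes every point of $\partial G$ into this open invariant set, giving $\Lambda_G(H)=\partial G$ and hence $[G:H]<\infty$ by Proposition \ref{qc prop}(10). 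No new loxodromic elements are constructed and the limit-set intersection property is never used. You instead build a loxodromic $g$ whose fixed points straddle $\Lambda_G(H)$ and get a contradiction from Proposition \ref{qc prop}(8) (equivalently Lemma \ref{qc conjugate}): an axis cannot be ``half in, half out'' of the limit set of a quasiconvex subgroup. This is a legitimate and rather more geometric argument, but it is heavier, consuming Propositions \ref{qc prop}(3),(5),(6),(8),(10), density of $G$-orbits, and a dynamical construction, where the paper needs only minimality, one element's dynamics, and Proposition \ref{qc prop}(10).

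Two steps in your construction need attention. First, independence of the conjugates $a,b$: your remark about ``finitely many cosets of boundary-point stabilizers'' is vague, but the repair is easy --- nonempty open subsets of $\partial G$ are infinite (a fact the paper itself uses in Proposition \ref{pingpong prop}), so choose $b$ with $b^{-\infty}\in V\setminus\{a^{-\infty}\}$ (note $a^{\infty}\in\Lambda_G(H)$ cannot lie in $V$); then $\{b^{\pm\infty}\}\neq\{a^{\pm\infty}\}$, and Proposition \ref{qc prop}(5) upgrades inequality of fixed-point sets to disjointness. Second, the real burden: the claim that for all large $m$ the element $g=a^mb^m$ is loxodromic with $g^{\infty}$ near $a^{\infty}$ and $g^{-\infty}$ near $b^{-\infty}$ is true and standard, but it is not covered by Lemma \ref{lem-finding free qc} or Lemma \ref{char loxo isom} as stated, so you must prove it. The clean way: pick pairwise disjoint neighborhoods $A^{\pm},B^{\pm}$ of the four fixed points with $\overline{A^{+}}\subset U$ and $\overline{B^{-}}\subset V$; for $m$ large, $a^{\pm m}(\partial G\setminus A^{\mp})\subset A^{\pm}$ and $b^{\pm m}(\partial G\setminus B^{\mp})\subset B^{\pm}$, whence $g(\partial G\setminus B^{-})\subset A^{+}$ and $g^{-1}(\partial G\setminus A^{+})\subset B^{-}$; combined with loxodromicity of $g$ (e.g.\ from Lemma \ref{lem-finding free qc}, since $n\mapsto g^{n}$ is a geodesic in the free group $\langle a^{m},b^{m}\rangle$) this places $g^{\infty}\in\overline{A^{+}}\subset U$ and $g^{-\infty}\in\overline{B^{-}}\subset V$. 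With that supplied, your final contradiction $\Lambda_G(H\cap\langle g\rangle)=\{g^{\infty}\}$ is airtight.
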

\begin{proof}
Without loss of generality both $G$ and $H$ are infinite groups. We prove the contrapositive statement.
Suppose $U\subset \Lambda_G(H)$ is a nonempty open subset of $\pa G$. Then
the set $V=\bigcup_{h\in H}h.U$ is an open and $H$-invariant subset of $\Lambda_G(H)$.
Also $V$ is an open subset of $\pa G$. Thus $\Lambda_G(H)\setminus V$ is an $H$-invariant
closed subset of $\Lambda_G(H)$. As $\Lambda_G(H)$ is the minimal, closed, $H$-invariant
subset of $\pa G$ (Lemma \ref{limit set minimal} $(2)$), and $U\neq \emptyset$ it follows
that $V=\Lambda_G(H)$ and it is an open subset of $\pa G$.

Now, as $H$ is an infinite hyperbolic group there is an infinite order element, say, $h\in H$.
Let $\xi \in \pa G$ be any element other than $h^{\pm \infty}$. Such a $\xi$ exists by
Proposition \ref{qc prop}(10) for otherwise $H$ will be of finite index in $G$. Now,
$h^n.\xi\map h^{\infty}$ as $n \map \infty$ by Lemma \ref{char loxo isom} $(2)$.
As $V$ is an open neighbourhood of $h^{\infty}$ in $\pa G$,
for all large $n$, we have $h^n.\xi \in V$. As $V$ is $H$-invariant it follows that $\xi\in V$.
Thus $V=\pa G$. Hence, $H$ has finite index in $G$ by Proposition \ref{qc prop}(10).
\end{proof}

As the topology on $\pa G$ makes it into a compact metrizable space, the Baire category theorem
gives the following immediate corollary.

\begin{cor}\label{cor-baire category}
Let $m\in\N$. Suppose $G$ is a hyperbolic group and $\{H_1, H_2, \cdots, H_m\}$ is a finite number of infinite index
quasiconvex subgroups of $G$. Then for any finite subset $S\sse G$, the set $\bigcup _{g\in S,~ 1\leq i\leq m} \Lambda_G(g H_i)$
is nowhere dense in $\pa G$. In particular, it is a proper subset of $\pa G$.
\end{cor}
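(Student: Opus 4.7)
The plan is to reduce the corollary to Lemma \ref{prop-finite-union-nowheredense} via two elementary ingredients: the homeomorphism action of $G$ on $\pa G$, and the closure of nowhere density under finite unions. First, using Lemma \ref{limit set basic}(3)(ii) I would identify each summand as $\Lambda_G(gH_i) = g\cdot \Lambda_G(H_i)$, so it suffices to show that each translate $g\cdot \Lambda_G(H_i)$ is nowhere dense in $\pa G$.

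For this, Lemma \ref{prop-finite-union-nowheredense} already gives that $\Lambda_G(H_i)$ is nowhere dense whenever $G$ is nonelementary (the elementary case is immediate, since then any infinite-index quasiconvex $H_i$ is finite, and $\Lambda_G(H_i)$ is empty). Since every $g \in G$ acts by isometries on a Cayley graph of $G$ and therefore extends to a self-homeomorphism of $\pa G$, the translated set $g\cdot \Lambda_G(H_i)$ is likewise nowhere dense.

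The last step is the standard topological fact that a finite union of nowhere dense subsets of any space is nowhere dense; applying it to the $|S|\cdot m$ sets $g\cdot \Lambda_G(H_i)$ yields the main assertion. The ``in particular'' clause is then automatic, since a nowhere dense subset of the nonempty space $\pa G$ cannot coincide with $\pa G$; the invocation of the Baire category theorem (valid because $\pa G$ is compact metrizable) is only needed if one wants the sharper statement that the complement of the union is a dense $G_\delta$. There is no substantive obstacle here: the corollary is essentially a direct packaging of Lemma \ref{prop-finite-union-nowheredense} once one remembers the $G$-equivariance of the boundary.
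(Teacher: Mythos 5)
Your proof is correct and takes essentially the same route as the paper: the paper's own proof is a one-line remark that the corollary follows immediately from Lemma \ref{prop-finite-union-nowheredense} together with the Baire category theorem applied to the compact metrizable space $\partial G$. Your write-up merely fills in the details the paper leaves implicit — the identification $\Lambda_G(gH_i)=g\Lambda_G(H_i)$ via Lemma \ref{limit set basic}(3)(ii) and equivariance of the boundary action, the degenerate elementary case (where each $\Lambda_G(H_i)$ is empty), and the correct observation that for a \emph{finite} union one only needs the elementary fact that finitely many nowhere dense sets union to a nowhere dense set, with Baire category not strictly required.
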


The following result easily follows from \cite[lemma $1.2$]{GMRS}. However, for completeness we give a different proof. We refer the reader to Section \ref{sec-gen on mal} for relevant notation used in Lemma \ref{egh finite}.
\begin{lemma}\label{egh finite}
Suppose $G$ is a hyperbolic group and $H$ is a quasiconvex subgroup of $G$.
Then the set $\mathcal E_G(H)=\{H\cap H^g:g\in\mathcal I_G(H)\}$ has finitely many subgroups of $H$ up to conjugation
by elements of $H$.
\end{lemma}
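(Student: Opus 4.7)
The plan is to show that every intersection $K_g := H \cap H^g$ with $g \in \mathcal I_G(H)$ is $H$-conjugate to some $K_{g_0}$ where $g_0$ lies in the $2D$-ball around the identity of $\Gamma_G$, for a fixed constant $D$ depending only on $H$ and $\dl$. Since $G$ is finitely generated, this ball is finite, which immediately yields the required finiteness of $H$-conjugacy classes.

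I would first set up the limit sets. For $g \in \mathcal I_G(H)$, the subgroup $K_g$ is infinite and quasiconvex in $G$ by Proposition \ref{qc prop}(8), so by the same proposition together with Lemma \ref{limit set basic}(3)(ii),
\[
\Lambda_G(K_g) \;=\; \Lambda_G(H) \cap \Lambda_G(H^g) \;=\; \Lambda_G(H) \cap g\Lambda_G(H).
\]
Being an infinite quasiconvex subgroup, $K_g$ contains an element of infinite order whose two fixed points lie in $\Lambda_G(K_g)$; pick two distinct such points $\xi_1, \xi_2$ in the intersection above.

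Let $k$ be the quasiconvexity constant of $H$, $\dl$ the hyperbolicity constant of $\Gamma_G$, and let $D = D(k,\dl)$ be the constant from Proposition \ref{qc prop}(9). By that proposition any geodesic line $\gm$ in $\Gamma_G$ joining $\xi_1$ and $\xi_2$ lies in $N_D(H)$. The isometry $y \mapsto g^{-1}y$ sends $\gm$ to a geodesic line joining $g^{-1}\xi_1, g^{-1}\xi_2 \in \Lambda_G(H)$, so the same proposition applied to $g^{-1}\gm$ gives $g^{-1}\gm \sse N_D(H)$, hence $\gm \sse N_D(gH)$. Fix now any vertex $x \in \gm$: there exist $h_1, h_2 \in H$ with $d(x, h_1) \leq D$ and $d(x, gh_2) \leq D$, hence $d(h_1, gh_2) \leq 2D$. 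Setting $g_0 := h_1^{-1} g h_2$, we have $d(e, g_0) \leq 2D$ and $g = h_1 g_0 h_2^{-1}$.

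The final step is a direct calculation: using $h_2^{-1} H h_2 = H$,
\[
H^g \;=\; (h_1 g_0 h_2^{-1}) H (h_1 g_0 h_2^{-1})^{-1} \;=\; h_1\, H^{g_0}\, h_1^{-1},
\]
and therefore
\[
K_g \;=\; H \cap H^g \;=\; h_1\bigl(h_1^{-1} H h_1 \cap H^{g_0}\bigr) h_1^{-1} \;=\; h_1\, K_{g_0}\, h_1^{-1},
\]
so $K_g$ is $H$-conjugate (via $h_1 \in H$) to $K_{g_0}$. Since the $2D$-ball around $e$ in $\Gamma_G$ is finite, the lemma follows. The main subtlety I anticipate is that $gH$ is a left coset rather than a subgroup, so Proposition \ref{qc prop}(9) does not apply to it directly; this is circumvented by transporting $\gm$ via the isometry $g^{-1}$ as above.
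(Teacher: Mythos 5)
Your proof is correct and follows essentially the same route as the paper's: pick two distinct points of $\Lambda_G(H\cap H^g)$, join them by a geodesic line, use Proposition \ref{qc prop}(9) to place that line $D$-close to both $H$ and $gH$, and conjugate by an element of $H$ to reduce $g$ to one of finitely many possibilities near the identity. The only (cosmetic) difference is that the paper parametrizes the outcome by the finitely many cosets $xH$ with $d_G(1,xH)\leq 2D$ and applies quasiconvexity directly to the coset $h^{-1}gH$, whereas you transport the geodesic by the isometry $g^{-1}$ so that Proposition \ref{qc prop}(9) is only ever invoked for the subgroup $H$ itself, and parametrize by elements $g_0$ in the $2D$-ball.
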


\begin{proof} Let $\Gamma_G$ be a Cayley graph of $G$ with respect to a finite generating set.
Suppose $\Gamma_G$ is $\delta$-hyperbolic and $H$ is a $k$-quasiconvex in $\Gamma_G$.
Let $g\in \mathcal I_G(H)$ and $E=H\cap H^g$. Then by Proposition \ref{qc prop}(8), $E$ is an
infinite quasiconvex subgroup of $G$. Then the 1st part of the same proposition implies
that $E$ is an infinite hyperbolic group. It follows that $\Lambda_G(E)$ has at least two
distinct points by Proposition \ref{qc prop}(3).
Let $\xi_1\neq \xi_2$ be points of $\Lambda_G(E)$ and let $\gamma$ be a geodesic
line in $\Gamma_G$ joining them. Then there is a point $h\in H$ such that
$d_G(\gamma, h)\leq D=D(\delta,k)$ where is $D$ is as in Proposition \ref{qc prop}(9).
Hence, $d_G(1, h^{-1}\gamma)\leq D$. We note that (1)
$h^{-1}\xi_1,~h^{-1}\xi_2\in \Lambda_G(h^{-1}E)=\Lambda_G(h^{-1}Eh)$;
and (2) also $h^{-1}\xi_1,~h^{-1}\xi_2\in \Lambda_G (h.gHg^{-1})=\Lambda_G(h^{-1}gH)$.
As $h^{-1}gH$ is also a $k$-quasiconvex subset of $\Gamma_G$, we have
$d_G(1, h^{-1}gH)\leq 2D$. Let $x=h^{-1}g$. We note that the number of such left cosets $xH$
of $H$ such that $d_G(1, xH)\leq 2D$ is finite. Hence, $h^{-1}Eh=H\cap xHx^{-1}$ is
also finite. The Lemma follows. 
\end{proof}

\begin{lemma}\label{qc conjugate}
	Suppose $G$ is a hyperbolic group and $H$ is a quasiconvex subgroup.
	Let $x,y\in G$, and let $x$ be of infinite order. Then $x^n\in yHy^{-1}$ for some $n\in \N$ if and only if $x^{\infty}\in \Lambda_G(yH)$.
\end{lemma}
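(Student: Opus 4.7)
The plan is to prove both directions by using the interplay between cyclic quasiconvex subgroups, their limit sets (which consist of exactly the two fixed points at infinity), and the fundamental fact from Proposition \ref{qc prop}(8) that limit sets commute with intersection of quasiconvex subgroups.

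For the forward direction, suppose $x^n \in yHy^{-1}$ for some $n\in \N$. Then $\lgl x^n\rgl \subset yHy^{-1}$, and hence $\Lambda_G(\lgl x^n\rgl)\subset \Lambda_G(yHy^{-1})$. By Proposition \ref{qc prop}(3), $\Lambda_G(\lgl x^n\rgl)=\{(x^n)^{\pm\infty}\}=\{x^{\pm\infty}\}$. Since the Hausdorff distance between $yH$ and $yHy^{-1}$ is at most $d_G(1,y)$ (both sets differ by right multiplication by $y^{-1}$), Lemma \ref{limit set basic}(2) gives $\Lambda_G(yH)=\Lambda_G(yHy^{-1})$, and the claim $x^{\infty}\in \Lambda_G(yH)$ follows.

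For the reverse direction, assume $x^{\infty}\in \Lambda_G(yH)$. Set $K=yHy^{-1}$, which is a quasiconvex subgroup of $G$ (as $H$ is quasiconvex and conjugation by $y$ is an isometry of $\Gamma_G$ after a bounded perturbation; quasiconvexity is preserved under conjugation). Again by the Hausdorff argument above, $\Lambda_G(K)=\Lambda_G(yHy^{-1})=\Lambda_G(yH)$, so $x^{\infty}\in \Lambda_G(K)$. Since $\lgl x\rgl$ is also quasiconvex in $G$ by Proposition \ref{qc prop}(3), I apply Proposition \ref{qc prop}(8) to the pair $K$ and $\lgl x\rgl$ to obtain
\[
\Lambda_G(K\cap \lgl x\rgl)=\Lambda_G(K)\cap \Lambda_G(\lgl x\rgl).
\]
Since $x^{\infty}$ lies in both sets on the right, the intersection is nonempty, so $K\cap \lgl x\rgl$ is infinite by Lemma \ref{limit set basic}(1). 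But any infinite subgroup of $\lgl x\rgl$ is of the form $\lgl x^n\rgl$ for some $n\in \N$. Therefore $x^n \in K=yHy^{-1}$, which completes the proof.

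There is no real obstacle here; the one mildly delicate point is the passage between $yH$ and $yHy^{-1}$, which is handled by the Hausdorff-distance invariance of limit sets, and the substantive input is Proposition \ref{qc prop}(8), which converts the hypothesis $x^{\infty}\in \Lambda_G(yH)$ into a concrete element $x^n$ of the conjugate subgroup.
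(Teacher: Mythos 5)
Your proof is correct and takes essentially the same approach as the paper's: both directions rest on the identity $\Lambda_G(yH)=\Lambda_G(yHy^{-1})$ together with the limit set intersection property (Proposition \ref{qc prop}(8)) applied to the quasiconvex subgroups $\lgl x\rgl$ and $yHy^{-1}$, concluding that $\lgl x\rgl\cap yHy^{-1}$ is an infinite cyclic subgroup $\lgl x^n\rgl$. The only cosmetic difference is that you justify $\Lambda_G(yH)=\Lambda_G(yHy^{-1})$ via Hausdorff-distance invariance (Lemma \ref{limit set basic}(2)), whereas the paper cites the equivalent fact recorded in Lemma \ref{limit set basic}(3)(ii).
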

\begin{proof}
	We note that $yHy^{-1}$ is a quasiconvex subgroup of $G$ and so is $\lgl x\rgl$. Now, the
	lemma follows from the limit set intersection property of quasiconvex subgroups of
	hyperbolic groups, i.e. Proposition \ref{qc prop}(8) and the fact that
	$\Lambda_G(yHy^{-1})=\Lambda_G(yH)$, as follows.

	If $x^n\in yHy^{-1}$ then clearly $x^{\infty}\in \Lambda_G(yHy^{-1})=\Lambda_G(yH)$.
	On the other hand if, $x^{\infty}\in \Lambda_G(yH)$, then we have
	$x^{\infty}\in \Lambda_G(\lgl x\rgl)\cap \Lambda_G(yHy^{-1})=\Lambda_G(\lgl x\rgl\cap yHy^{-1})$.
	It follows that $\lgl x\rgl\cap yHy^{-1}=\lgl x^n\rgl$ for some $n\in \mathbb N$.
\end{proof}

\begin{lemma}\label{commen lemma}
Suppose $G$ is a hyperbolic group and $H$ is an infinite, quasiconvex subgroup of $G$.
Let $g\in  G\setminus Comm_G(H)$. Then $H\cap H^g$ has infinite index in $H$.
\end{lemma}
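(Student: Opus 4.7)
The plan is to prove the contrapositive: assuming $[H:H\cap H^g]<\infty$, I would derive $g\in Comm_G(H)$. The approach translates the index condition into a one-sided inclusion of limit sets, upgrades it to an equality via the dynamics of $g$ on $\partial G$, and translates back. As a preliminary I would record the following dictionary: \emph{if $K\le L$ are both quasiconvex subgroups of a hyperbolic group $G$ with $\Lambda_G(K)=\Lambda_G(L)$, then $[L:K]<\infty$}. Since $K,L$ are both qi embedded in $G$ (Proposition \ref{qc prop}(1)), $K$ is qi embedded in $L$ and hence quasiconvex in the hyperbolic group $L$; the canonical embedding $\partial L\hookrightarrow\partial G$ identifies $\Lambda_G(L)$ with $\partial L$ and $\Lambda_L(K)$ with $\Lambda_G(K)$, so the hypothesis becomes $\Lambda_L(K)=\partial L$, and Proposition \ref{qc prop}(10) applied inside $L$ gives $[L:K]<\infty$.

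Using Lemma \ref{limit set basic}(3) and Proposition \ref{qc prop}(8), the hypothesis $[H:H\cap H^g]<\infty$ gives $\Lambda(H)=\Lambda(H\cap H^g)=\Lambda(H)\cap g\Lambda(H)$, i.e., $\Lambda(H)\subseteq g\Lambda(H)$, equivalently $g^{-n}\Lambda(H)\subseteq\Lambda(H)$ for every $n\ge 0$. The heart of the argument is to upgrade this to $g\Lambda(H)=\Lambda(H)$. If $g$ has finite order $m$, the chain $\Lambda(H)\subseteq g\Lambda(H)\subseteq\cdots\subseteq g^m\Lambda(H)=\Lambda(H)$ collapses to a string of equalities. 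Otherwise $g$ has infinite order and, by Proposition \ref{qc prop}(3), acts loxodromically on $\partial G$ with fixed points $g^{\pm\infty}$. Since $H$ is infinite and quasiconvex, hence an infinite hyperbolic group, $|\Lambda(H)|\ge 2$; pick $\xi\in\Lambda(H)\setminus\{g^{\infty}\}$. By Lemma \ref{char loxo isom}(a)(2) applied to $g^{-1}$, $g^{-n}\xi\to g^{-\infty}$, and since $g^{-n}\xi\in g^{-n}\Lambda(H)\subseteq\Lambda(H)$ and $\Lambda(H)$ is closed, $g^{-\infty}\in\Lambda(H)$. Lemma \ref{qc conjugate} applied with $y=1$ and $x=g^{-1}$ then gives $g^{-n_0}\in H$ for some $n_0\in\N$, so $H^{g^{n_0}}=H$ and $g^{n_0}\Lambda(H)=\Lambda(H)$; the chain $\Lambda(H)\subseteq g\Lambda(H)\subseteq\cdots\subseteq g^{n_0}\Lambda(H)=\Lambda(H)$ again collapses, yielding $g\Lambda(H)=\Lambda(H)$.

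Finally, from $\Lambda(H^g)=g\Lambda(H)=\Lambda(H)$ we obtain $\Lambda(H\cap H^g)=\Lambda(H)\cap\Lambda(H^g)=\Lambda(H^g)$, so the dictionary applied to $H\cap H^g\le H^g$ gives $[H^g:H\cap H^g]<\infty$; combined with the hypothesis this forces $g\in Comm_G(H)$, contradicting $g\in G\setminus Comm_G(H)$. The main obstacle is the upgrade from the inclusion $\Lambda(H)\subseteq g\Lambda(H)$ to the equality $\Lambda(H)=g\Lambda(H)$: the decisive ingredient is that the pointwise north-south dynamics of the loxodromic $g$ on $\partial G$, together with the closedness of $\Lambda(H)$, forces $g^{-\infty}\in\Lambda(H)$, which via Lemma \ref{qc conjugate} supplies the algebraic fact $g^{n_0}\in H$ needed to close the chain.
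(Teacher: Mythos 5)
Your proposal is correct, and its overall skeleton coincides with the paper's proof: both argue the contrapositive, convert the finite-index hypothesis into the inclusion $\Lambda(H)\subseteq g\Lambda(H)$ via Lemma \ref{limit set basic} and Proposition \ref{qc prop}(8), upgrade to equality by splitting on whether $g$ has finite or infinite order (using Lemma \ref{qc conjugate} to extract a power of $g$ lying in $H$ and collapse the chain), and finish by converting equality of limit sets back into commensurability. The two places where you deviate are both in how sub-steps are justified, and both are valid. First, in the infinite-order case you obtain $g^{-\infty}\in\Lambda(H)$ by north--south dynamics (Lemma \ref{char loxo isom}(a)(2) applied to $g^{-1}$) together with closedness of $\Lambda(H)$, whereas the paper forms $Z=\bigcap_{n\ge0}g^{-n}\Lambda(H)$, uses compactness (finite intersection property) to see $Z\neq\emptyset$, and then invokes minimality of $\Lambda(\langle g\rangle)$ (Lemma \ref{limit set minimal}) to place $g^{\pm\infty}$ in $Z$; your route is slightly more direct. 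Second, your ``dictionary'' (equal limit sets of nested quasiconvex subgroups implies finite index) is exactly the paper's Lemma \ref{commens lemma 2}, which the paper proves by a hands-on geodesic argument ($H\subset N_{2D}(K)$ via Proposition \ref{qc prop}(9)), and then uses through Corollary \ref{qc sbgp commens}; you instead re-prove it by passing to the intrinsic boundary of the larger subgroup and applying the GMRS result, Proposition \ref{qc prop}(10). This relies on the standard fact that a qi embedding $L\hookrightarrow G$ induces a topological embedding $\partial L\hookrightarrow\partial G$ identifying $\Lambda_L(K)$ with $\Lambda_G(K)$, which the paper does not state explicitly but which is standard; if one wants to stay entirely within the paper's toolkit, the paper's direct proof of Lemma \ref{commens lemma 2} avoids that extra ingredient.
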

\proof We prove the contrapositive statement.
Suppose $x\in G$ such that $H\cap H^x$ has finite index in $H$.

{\bf Claim:} $\Lambda(H)=x\Lambda(H)$.

By Lemma \ref{limit set basic}(3)(i), $\Lambda(H)=\Lambda(H\cap H^x)$. By the limit set intersection
property (Proposition \ref{qc prop}(8)), $\Lambda(H\cap H^x)=\Lambda(H)\cap \Lambda(H^x)$
as $H^x$ is a quasiconvex subgroup of $G$ too. Also by Lemma \ref{limit set basic}(3)(ii),
$\Lambda(H^x)=\Lambda (xH)=x\Lambda(H)$. Thus we have $\Lambda(H)=\Lambda(H)\cap x\Lambda(H)$.
This means that $\Lambda(H)\subset x\Lambda(H)$ whence $\Lambda(H)\supset x^{-1}\Lambda(H)$.
Then inductively, by repeated application of $x^{-1}$, we have a descending chain
$$\Lambda(H)\supset x^{-1}\Lambda(H)\supset x^{-2}\Lambda(H)\supset \cdots$$
Let $Z=\cap_{n\geq 0} x^{-n}\Lambda(H)$. It is easy to check that $Z$ is a
closed subset of $\pa G$ which is invariant under $\lgl x\rgl$. Now there are two cases to consider

{\bf Case 1.} Suppose $x$ has finite order. Let $x^n=1$. In that case,
$\Lambda(H)=x^{-n}\Lambda(H)$. Hence, from the descending chain it follows that
$\Lambda(H)=x^{-1}\Lambda(H)$, and thus $x\Lambda(H)=\Lambda(H)$.

{\bf Case 2.} Suppose $x$ has infinite order.
In this case, as each $x^{-n}\Lambda(H)$ is a nonempty closed subset of $\pa G$ which is
a compact space. Hence, by the finite intersection property of compact spaces, $Z$ is a nonempty set.
Hence, by Lemma \ref{limit set minimal} $(2)$, $x^{\pm \infty}\in Z$.
In particular, $x^{\pm \infty}\in \Lambda(H)$.
Hence, by Lemma \ref{qc conjugate}, $x^n\in H$. Thus, once again, using the descending chain,
as $x^{-n}\Lambda(H)=\Lambda(x^{-n} H)=\Lambda(H)$, we have $\Lambda(H)=x^{-1}\Lambda(H)$,
and so $x\Lambda(H)=\Lambda(H)$. This completes the proof of the claim.

Now, using the claim we have $\Lambda(H)=x\Lambda(H)=\Lambda(xH)=\Lambda(H^x)$
as was observed in the first paragraph of the proof of the claim.
Finally, we can use Corollary \ref{qc sbgp commens} below to finish the proof.
\qed

\begin{lemma}\label{commens lemma 2}
Suppose $G$ is a hyperbolic group and $H, K$ are two infinite quasiconvex subgroups of $G$.
Suppose $K<H$ and $\Lambda(K)=\Lambda(H)$. Then $[H:K]<\infty$.
\end{lemma}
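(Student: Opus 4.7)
My approach is to transfer the problem from $G$ down to $H$: I would show that $K$ is a quasiconvex subgroup of the hyperbolic group $H$ satisfying $\Lambda_H(K)=\partial H$, and then invoke Proposition \ref{qc prop}(10) with $H$ playing the role of the ambient hyperbolic group.

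First I would verify that $K$ is quasiconvex in $H$. Both $K$ and $H$ are quasiconvex in $G$, so by Proposition \ref{qc prop}(1) both are finitely generated and qi embedded in a Cayley graph of $G$. The inclusion $H\hookrightarrow G$ and the composition $K\hookrightarrow H\hookrightarrow G$ are therefore both qi embeddings, and a routine comparison of the word metrics forces $K\hookrightarrow H$ itself to be a qi embedding. Since $H$ is hyperbolic (again by Proposition \ref{qc prop}(1)), the same proposition gives that $K$ is quasiconvex in $H$.

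Next I would identify boundaries. The qi embedding $\iota\colon H\hookrightarrow G$ extends to a topological embedding $\partial\iota\colon\partial H\hookrightarrow\partial G$ whose image is precisely $\Lambda_G(H)$; this is the standard boundary extension for qi embeddings of hyperbolic spaces. Under this identification, a sequence in $K$ that tends to infinity in $H$ has its limit in $\partial H$ carried to its limit in $\partial G$, so $\partial\iota$ restricts to a homeomorphism $\Lambda_H(K)\cong\Lambda_G(K)$. By hypothesis $\Lambda_G(K)=\Lambda_G(H)$, and the image of $\partial\iota$ is all of $\Lambda_G(H)$; hence $\Lambda_H(K)=\partial H$. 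Proposition \ref{qc prop}(10), applied to the quasiconvex subgroup $K$ of the hyperbolic group $H$, then yields $[H:K]<\infty$.

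The step requiring most care is the boundary identification — justifying that the natural extension of $\iota$ is a topological embedding of $\partial H$ onto $\Lambda_G(H)$ compatibly with passing to limit sets of subsets. Once that is in place, the rest of the argument is a clean invocation of results already collected in Sections \ref{sec-pre on hyp geo} and \ref{sec-qc subgroups}, and the conclusion follows uniformly (in particular, the degenerate case where $H$ is virtually cyclic is handled by the same application of Proposition \ref{qc prop}(10) without any separate discussion).
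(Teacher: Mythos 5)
Your proposal is correct, but it takes a genuinely different route from the paper. The paper argues directly inside $\Gamma_G$: it picks two distinct points $\xi,\xi'\in\Lambda(H)=\Lambda(K)$, takes a geodesic line $\gamma$ joining them, and uses Proposition \ref{qc prop}(9) to get $\gamma\subset N_D(H)\cap N_D(K)$; then, since for every $h'\in H$ the translate $h'\gamma$ again joins two points of $\Lambda(K)$, the same tracking estimate applied to all translates yields $H\subset N_{2D}(K)$, from which $[H:K]<\infty$ follows by an elementary coset count (indeed with the explicit bound $[H:K]\le |B_G(1,2D)|$). You instead transfer the problem to $H$ as ambient group: you show $K$ is quasiconvex in $H$ via a word-metric comparison (which is fine, and in fact only needs that $K\hookrightarrow G$ is a qi embedding together with the Lipschitz inclusion $H\hookrightarrow G$), identify $\partial H$ with $\Lambda_G(H)$ via the boundary extension of the qi embedding $H\hookrightarrow G$, check compatibility with limit sets to get $\Lambda_H(K)=\partial H$, and then invoke Proposition \ref{qc prop}(10). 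This is a clean reduction, and your handling of the degenerate virtually cyclic case is fine since Proposition \ref{qc prop}(10) is stated without a nonelementarity hypothesis. What your route buys is conceptual economy — the finite-index conclusion is outsourced to the cited GMRS result; what it costs is the boundary-identification machinery (topological embedding $\partial H\hookrightarrow\partial G$ with image $\Lambda_G(H)$, naturality for limit sets of subsets, plus a compactness argument for surjectivity), which is standard but is not among the tools collected in the paper, whereas the paper's argument is self-contained, quantitative, and avoids boundaries of the subgroup altogether. It is also worth noting that the proof of the cited Proposition \ref{qc prop}(10) is itself essentially the same geodesic-tracking argument the paper runs directly, so the two proofs ultimately rest on the same geometric mechanism, packaged differently.
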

\proof Let $\xi, \xi'$ be any two distinct point of $\Lambda(H)=\Lambda(K)$. Let $\gamma$ be a
geodesic line in $\Gamma_G$ joining these points. Then we know by Proposition
\ref{qc prop}(9) that
$\gamma \subset N_D(K)\cap N_D(H)$ for $D\geq 0$ as $K, H$ are quasiconvex in $G$.
Let $x\in \gamma$ and $h\in H$ be such that $d_G(x,h)\leq D$. Then $h\in N_{2D}(K)$.
Now, for any $h'\in H$, $h'\gamma$ is a geodesic joining
$h'\xi, h'\xi'\in h\Lambda(H)=\Lambda(H)=\Lambda(K)$ as $\Lambda(H)$ is stable under the
action of $H$. Thus by the same argument $h'.x\in N_D(K)$. Now, $d_G(h'x, h'h)=d_H(x,h)\leq D$
and hence, $h'h\in N_{2D}(K)$. Thus $H\subset N_{2D}(K)$. From this it easily follows
that $[H:K]<\infty$.
\qed

\begin{cor}\label{qc sbgp commens}
Suppose $G$ is a hyperbolic group and $H,K$ are two quasiconvex subgroup of $G$.
If $\Lambda(H)=\Lambda(K)$ then $H$, $K$ are commensurable subgroups of $G$, i.e. $H\cap K$ has
finite index in both $H$ and $K$.
\end{cor}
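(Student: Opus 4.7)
The plan is to reduce the corollary directly to Lemma \ref{commens lemma 2} by passing to the intersection $L = H \cap K$. The key observation is that $L$ is already a quasiconvex subgroup of both $H$ and $K$, and its limit set can be computed via the limit set intersection property.

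First, I would dispose of the degenerate case. By Lemma \ref{limit set basic}(1), an infinite subgroup of $G$ has nonempty limit set while a finite subgroup has empty limit set; hence the hypothesis $\Lambda(H) = \Lambda(K)$ forces $H$ and $K$ to be either both finite or both infinite. If both are finite, then $H \cap K$ is finite and hence has finite index in both $H$ and $K$ trivially, so the conclusion holds. So we may assume both $H$ and $K$ are infinite.

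Next, set $L = H \cap K$. By Proposition \ref{qc prop}(8), $L$ is quasiconvex in $G$ and
\[
\Lambda(L) \;=\; \Lambda(H) \cap \Lambda(K) \;=\; \Lambda(H) \;=\; \Lambda(K),
\]
using the hypothesis $\Lambda(H) = \Lambda(K)$. In particular $\Lambda(L)$ agrees with $\Lambda(H)$, which is infinite (as $H$ is an infinite quasiconvex subgroup, so $\Lambda(H)$ contains $\Lambda_G(\langle h \rangle) = \{h^{\pm\infty}\}$ for any infinite-order $h \in H$, guaranteed by Proposition \ref{qc prop}(3) applied inside the hyperbolic group $H$). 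Therefore $L$ itself is infinite, and we may apply Lemma \ref{commens lemma 2}.

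Since $L < H$ are both infinite quasiconvex subgroups of $G$ with $\Lambda(L) = \Lambda(H)$, Lemma \ref{commens lemma 2} gives $[H : L] < \infty$. The same argument with the roles of $H$ and $K$ swapped yields $[K : L] < \infty$. Thus $H \cap K = L$ has finite index in both $H$ and $K$, which is precisely the definition of commensurability. There is no real obstacle here; the statement is essentially a symmetrization of Lemma \ref{commens lemma 2} once one notices that the intersection is automatically quasiconvex and inherits the full limit set.
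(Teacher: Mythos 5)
Your proof is correct and takes essentially the same route as the paper's: pass to $L=H\cap K$, use the limit set intersection property (Proposition \ref{qc prop}(8)) to get that $L$ is quasiconvex with $\Lambda(L)=\Lambda(H)=\Lambda(K)$, and then apply Lemma \ref{commens lemma 2} twice. The only difference is that you explicitly handle the finite case and verify that $L$ is infinite before invoking Lemma \ref{commens lemma 2}; these are details the paper's terse proof leaves implicit, and checking them is a sound (if minor) improvement rather than a departure in method.
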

\proof By limit set intersection property of quasiconvex subgroups, i.e. Proposition \ref{qc prop}
(8), we have that $H\cap K$ is quasiconvex in $G$ and $\Lambda(H\cap K)=\Lambda(H)=\Lambda(K)$.
Then we are done by Lemma \ref{commens lemma 2}.
\qed

\subsection{Coned-off spaces and applications}
We remind the reader that the coning construction or the electrocution was introduced by Farb
in \cite{farb-relhyp}. Suppose $Y$ is any graph and $\{A_i\}$ is a collection of subgraphs
in $Y$. Then the coned-off graph $\hat{Y}$ is the new graph obtained
from $Y$ by introducing new vertices and edges along with those already in $Y$ as follows:

(1) Vertices: For each set $A_i$ suppose we have a new vertex $v_i$.

(2) Edges: $v_i$ is joined by an edge to each vertex of $A_i$.

\begin{prop}\textup{(\cite[Proposition $2.6$]{KR-FFC})} \label{coned space hyp}
Suppose $G$ is a hyperbolic group and $\{H_i: 1\leq i\leq l\}$ are quasiconvex subgroups of $G$.
Suppose that in a Cayley graph $\Gamma_G$ we cone off all the cosets of all the subgroups $H_i$.
Then the coned-off graph $\hat{\Gamma}_G$ is hyperbolic.

\end{prop}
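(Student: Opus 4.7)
The plan is to verify directly that geodesic triangles in $\hat{\Gamma}_G$ are uniformly slim, by lifting geodesics to controlled paths in $\Gamma_G$ and exploiting both the hyperbolicity of $\Gamma_G$ and the uniform quasiconvexity of cosets. First, I would fix a common constant $k\geq 0$ such that every coset $gH_i$, for $g\in G$ and $1\leq i\leq l$, is $k$-quasiconvex in $\Gamma_G$; this is possible because $G$ acts on $\Gamma_G$ by isometries and the family $\{H_i\}$ is finite. Let $\delta_0$ be a hyperbolicity constant of $\Gamma_G$.

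Next, I would introduce the notion of a \emph{lift}: given a path $\hat{\sigma}$ in $\hat{\Gamma}_G$, one replaces each two-edge detour through a cone vertex $v_{gH_i}$ (going from some $h\in gH_i$ up to $v_{gH_i}$ and back down to $h'\in gH_i$) by a $\Gamma_G$-geodesic from $h$ to $h'$, which by $k$-quasiconvexity lies in $N_k(gH_i)$. This gives a path $\sigma$ in $\Gamma_G$ with the same endpoints as $\hat{\sigma}$. Conversely, since $\Gamma_G$ is a subgraph of $\hat{\Gamma}_G$, any $\Gamma_G$-path of length $n$ gives a $\hat{\Gamma}_G$-path of length at most $n$, so $d_{\hat{\Gamma}_G}\le d_{\Gamma_G}$; moreover, each sub-segment inside some $N_k(gH_i)$ can be shortcut in $\hat{\Gamma}_G$ via the cone vertex $v_{gH_i}$ at a cost of at most $2+2k$.

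Given a geodesic triangle in $\hat{\Gamma}_G$ with vertices $x,y,z\in V(\Gamma_G)$ (cone-vertex vertices being easily reduced to this case), I would compare it with a $\Gamma_G$-geodesic triangle on the same vertices, which is $\delta_0$-slim. The key step is a bounded coset penetration-type estimate: a $\hat{\Gamma}_G$-geodesic between $u,v\in V(\Gamma_G)$ and its lift must penetrate essentially the same sequence of cosets as any $\Gamma_G$-geodesic $[u,v]$, up to bounded Hausdorff error inside each coset. This follows from a standard $\delta_0$-slim triangle argument combined with $k$-quasiconvexity: if a $\Gamma_G$-geodesic pushes far into some $gH_i$, any competing path of comparable $\hat{\Gamma}_G$-length must also come close to $gH_i$, since long excursions away from $N_k(gH_i)$ are expensive even when shortcuts through other cone vertices are allowed. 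Transferring this back through the cone vertices then lets one shortcut between corresponding points on any two sides of the $\hat{\Gamma}_G$-triangle in uniformly bounded $\hat{\Gamma}_G$-distance.

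The hard part will be the technical bookkeeping of this bounded coset penetration statement when moving between $\hat{\Gamma}_G$ and $\Gamma_G$: one must carefully track each entry/exit of a lifted $\hat{\Gamma}_G$-geodesic at a coset, match it with the corresponding behavior of a $\Gamma_G$-geodesic via $\delta_0$-slimness, and exclude pathological backtracking across cone vertices. Once this matching is controlled, the $\delta_0$-slim triangles in $\Gamma_G$ combined with constant-size shortcuts through cone vertices yield uniformly slim triangles in $\hat{\Gamma}_G$, with hyperbolicity constant depending only on $\delta_0$, $k$, and $l$.
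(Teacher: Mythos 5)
The paper itself gives no proof of this proposition---it is quoted from \cite{KR-FFC}, where the argument runs through a ``guessing geodesics'' criterion of Bowditch type. Measured against that (or any correct) proof, your proposal has a genuine gap at precisely the step you defer to ``technical bookkeeping'': the bounded-coset-penetration (BCP) estimate you invoke is not a standard consequence of slim triangles plus quasiconvexity; as a uniform statement it is \emph{false} for general quasiconvex subgroups. For a collection of quasiconvex subgroups of a hyperbolic group, BCP (equivalently, strong relative hyperbolicity) holds if and only if the collection is almost malnormal, and the $H_i$ in this proposition are arbitrary quasiconvex subgroups---indeed, the paper applies the proposition exactly to subgroups not yet known to be weakly malnormal. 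Concretely, take $G=F(a,b)$ and $H=\langle a,\,bab^{-1}\rangle$. Then $H\cap H^{b}\supset\langle bab^{-1}\rangle$ is infinite although $b\notin H$, so the cosets $H$ and $bH$ contain infinite subsets at Hausdorff distance $1$ (namely $\{ba^{k}b^{-1}\}$ and $\{ba^{k}\}$). The $\hat{\Gamma}_G$-geodesic $b,\,v_{bH},\,ba^{n}$ penetrates $bH$ with entry $b$ and exit $ba^{n}$, at $\Gamma_G$-distance $n$, while the competing path $b\to 1\to v_{H}\to ba^{n}b^{-1}\to ba^{n}$, of $\hat{\Gamma}_G$-length $4$ independent of $n$, never penetrates $bH$ at all (and penetrates $H$ with entry/exit at distance $n+2$). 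So there is no uniform entry/exit matching between paths of uniformly bounded quality, and when distinct cosets fellow-travel like this, ``the sequence of cosets penetrated'' is not even coarsely well defined. An argument that controls $\hat{\Gamma}_G$-geodesics through such a matching cannot close.

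What makes the theorem true, and what the cited proof actually does, is to avoid analyzing $\hat{\Gamma}_G$-geodesics altogether. Apply the guessing-geodesics lemma to the candidate paths $L(x,y)=$ image in $\hat{\Gamma}_G$ of a $\Gamma_G$-geodesic $[x,y]$: (i) triangles of such paths are $\delta_0$-slim in $\hat{\Gamma}_G$ simply because the map $\Gamma_G\to\hat{\Gamma}_G$ is distance-decreasing; (ii) if $x,y$ lie in a common coset $gH_i$ (so $d_{\hat{\Gamma}_G}(x,y)\le 2$ via the cone vertex), then by $k$-quasiconvexity $[x,y]\subset N_k(gH_i)$, so $L(x,y)$ has $\hat{\Gamma}_G$-diameter at most $2k+2$. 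These two facts alone yield hyperbolicity of $\hat{\Gamma}_G$, with constant depending only on $\delta_0$ and $k$, and quasiconvexity enters only in (ii). Your first two paragraphs (uniform quasiconvexity of the cosets, lifts, the Lipschitz comparison) are correct and are genuine ingredients of this argument; the flaw is routing the conclusion through a BCP-type matching of coset entries and exits, rather than through a hyperbolicity criterion that never mentions geodesics of the coned-off graph.
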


\begin{prop}\textup{(\cite[Corollary $6.4$]{ab-mn-acy}, \cite[Theorem $2.6$]{KR-FFC}),
(See also \cite[Section 3.2]{pranab-ravi-ct})} \label{geod in coned space}
Suppose $G$, $\Gamma_G$ and $\{H_i: 1\leq i\leq l\}$ are as in Proposition \ref{coned space hyp}.
Then any (quasi) geodesic ray $\gamma$ in $\Gamma_G$ satisfies exactly one of the following
two properties:

(1) $\gamma$ is a finite diameter subset of $\hat{\Gamma}_G$ which is equivalent to
$\gamma(\infty)\in \Lambda_G(gH_i)$ for some $g\in G$ and $1\leq i\leq l$.

(2) $\gamma$ is an infinite diameter subset of $\hat{\Gamma}_G$. In this case,
there is $\eta \in \pa \hat{G}$ such that $\gamma(n)\map \eta$ as $n\map \infty$.
Moreover, for any (quasi)geodesic ray $\alpha$ in $\hat{\Gamma}_G$ such that
$\alpha(\infty)=\eta$ one has $Hd(\alpha, \gamma)<\infty$.
\end{prop}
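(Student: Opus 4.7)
The dichotomy in the statement is tautological, since any subset of $\hat{\Gamma}_G$ has either finite or infinite $\hat d$-diameter; so the plan is to establish the equivalence in (1) and the convergence and Hausdorff statement in (2). Throughout I use that $\hat{\Gamma}_G$ is hyperbolic (Proposition \ref{coned space hyp}) and that each coset $gH_i$ has $\hat d$-diameter at most $2$ (every vertex of $gH_i$ is joined to its cone vertex by an edge).

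For the direction $(\Leftarrow)$ in (1), suppose $\gamma(\infty)\in \Lambda_G(gH_i)$. Limit sets of finite sets are empty, so $gH_i$ must be infinite; by Lemma \ref{limit set basic} one can pick $\xi'\in \Lambda_G(gH_i)\setminus\{\gamma(\infty)\}$. By Proposition \ref{qc prop}(9), any geodesic line $\ell$ in $\Gamma_G$ joining $\xi'$ and $\gamma(\infty)$ lies in a uniform $\Gamma_G$-neighborhood $N_D(gH_i)$. Standard fellow-traveling of quasigeodesic rays sharing a boundary point in a hyperbolic space forces the tail of $\gamma$ into $N_{D'}(\ell)\subseteq N_{D+D'}(gH_i)$; since $gH_i$ has $\hat d$-diameter $\le 2$, $\gamma$ is $\hat d$-bounded.

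For the direction $(\Rightarrow)$ in (1), suppose $\gamma$ lies in the $\hat d$-ball of radius $R$ about $\gamma(0)$. For any vertex $x\in\Gamma_G$ on $\gamma$, a $\hat d$-path of length $\le R$ from $\gamma(0)$ to $x$ either uses no cone vertex (giving $d_{\Gamma_G}(x,\gamma(0))\le R$) or else its cone vertex nearest to $x$ shows that $x$ lies within $\Gamma_G$-distance $R$ of some coset $gH_i$ whose cone vertex is itself at $\hat d$-distance $\le R$ from $\gamma(0)$. Only finitely many cone vertices lie within this $\hat d$-distance, so a finite collection $\{g_jH_{i_j}\}_{j\in J}$ of cosets satisfies $\gamma\subseteq N_R(\gamma(0))\cup\bigcup_{j\in J} N_R(g_jH_{i_j})$ in $\Gamma_G$. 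Since $\gamma$ is $\Gamma_G$-unbounded, pigeonhole yields some $j_0$ for which $\gamma\cap N_R(g_{j_0}H_{i_{j_0}})$ is unbounded; choosing a tail sequence $\gamma(n_k)$ in this intersection together with nearby points $y_k\in g_{j_0}H_{i_{j_0}}$, one has $y_k\to\gamma(\infty)$ in $\partial G$, whence $\gamma(\infty)\in \Lambda_G(g_{j_0}H_{i_{j_0}})$.

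For (2), when $\gamma$ has infinite $\hat d$-diameter the key technical input --- drawn from Farb's electric-geometry machinery as in the cited references --- is that a suitable reparameterization of $\gamma$ is a uniform quasigeodesic in the hyperbolic space $\hat{\Gamma}_G$. Granting this, stability of quasigeodesic rays in hyperbolic spaces immediately yields both the existence of a limit $\eta=\lim\gamma(n)\in\partial\hat{G}$ and the bounded Hausdorff distance between $\gamma$ and any quasigeodesic ray in $\hat{\Gamma}_G$ ending at $\eta$. The main obstacle is this electric-quasigeodesic claim, which rests on a bounded-coset-penetration-type statement --- essentially the same phenomenon underlying the pigeonhole step of $(\Rightarrow)$ in (1) --- and is standard in this framework.
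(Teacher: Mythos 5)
The paper gives no proof of Proposition \ref{geod in coned space}; it is quoted from the literature (Kapovich--Rafi, Abbott--Manning, Dowdall--Taylor), so an honest argument here must actually supply the content of those results. Your direction $(\Leftarrow)$ of (1) is fine, and your treatment of (2) simply defers the crux (that $\Gamma_G$-geodesics become uniform unparameterized quasigeodesics in $\hat{\Gamma}_G$) back to that same literature, mirroring the paper's citation. The genuine gap is in the one part you do attempt from scratch, direction $(\Rightarrow)$ of (1): the assertion that ``only finitely many cone vertices lie within $\hat{d}$-distance $R$ of $\gamma(0)$'' is false, because $\hat{\Gamma}_G$ is not locally finite (indeed not proper). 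Concretely, take $G=\mathbb{F}_2=\langle a,b\rangle$ and $H_1=\langle a\rangle$, an infinite-index quasiconvex subgroup. For every $n\in\mathbb{Z}$ the path $1 \map v(H_1)\map a^n\map a^nb\map v(a^nbH_1)$ shows $\hat{d}\bigl(1,v(a^nbH_1)\bigr)\le 4$, and the cosets $a^nbH_1$ are pairwise distinct; so a $\hat{d}$-ball of radius $4$ already contains infinitely many cone vertices. Consequently your index set $J$ may be infinite, and the pigeonhole step --- the heart of your argument --- collapses: an unbounded ray covered by infinitely many sets $N_R(g_jH_{i_j})$ need not meet any single one of them in a sequence going to infinity.

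This is not a repairable slip in wording but precisely the difficulty the cited theorems address. The finiteness actually available lives in $\Gamma_G$, which is locally finite: only finitely many cosets of the $H_i$ meet a bounded $\Gamma_G$-ball. Your construction, however, only places the relevant cosets $\hat{d}$-close to $\gamma(0)$, not $\Gamma_G$-close, so that finiteness cannot be invoked. Bridging exactly this gap is what the deelectrification / bounded-coset-penetration analysis does: one replaces each cone passage of a $\hat{d}$-geodesic by a $\Gamma_G$-geodesic lying uniformly close to the corresponding coset (using uniform quasiconvexity of the $H_i$), and then shows that a $\Gamma_G$-geodesic ray with bounded image in $\hat{\Gamma}_G$ must spend an unbounded amount of time in a uniform $\Gamma_G$-neighborhood of one single coset, from which $\gamma(\infty)\in\Lambda_G(gH_i)$ follows as in your final step. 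You acknowledge at the end that a BCP-type statement ``underlies'' the pigeonhole, but as written the pigeonhole rests on the false local-finiteness claim rather than on any such input, so the implication (finite $\hat{d}$-diameter) $\Rightarrow$ ($\gamma(\infty)\in\Lambda_G(gH_i)$ for some coset) remains unproved.
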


Let $\pa_u G\subset \pa G$ denote the set of points $\xi$ for which there is a geodesic
ray in $\Gamma_G$ which is of infinite diameter in $\hat{\Gamma}_G$. Then we have the following:

\begin{theorem} \textup{(\cite[Theorem 3.2]{dow-tay-cosurface} (see also \cite[Theorem $6.7$]{ab-mn-acy})}
\label{u-boundary}

The natural map $\pa_u G\map \pa \hat{G}$ given by Proposition \ref{geod in coned space}(2)
is a homeomorphism.
\end{theorem}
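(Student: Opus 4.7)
The plan is to verify the four properties: well-definedness, injectivity, surjectivity, and continuity in both directions.

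For well-definedness, given $\xi \in \pa_u G$, pick any geodesic ray $\gamma$ in $\Gamma_G$ with $\gamma(\infty) = \xi$. By definition of $\pa_u G$, the ray $\gamma$ has infinite $\hat{\Gamma}_G$-diameter, so Proposition \ref{geod in coned space}(2) supplies a unique $\eta \in \pa \hat{G}$ with $\gamma(n) \to \eta$. For independence of the choice of ray, any two geodesic rays $\gamma, \gamma'$ in $\Gamma_G$ converging to $\xi$ satisfy $Hd_{\Gamma_G}(\gamma, \gamma') < \infty$; since the inclusion $\Gamma_G \hookrightarrow \hat{\Gamma}_G$ is $1$-Lipschitz, also $Hd_{\hat{\Gamma}_G}(\gamma, \gamma') < \infty$. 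Applying Proposition \ref{geod in coned space}(2) to a $\hat{\Gamma}_G$-quasigeodesic ray $\alpha$ with $\alpha(\infty) = \eta$ shows $\alpha$ is at finite $\hat{\Gamma}_G$-Hausdorff distance from both $\gamma$ and $\gamma'$, so both rays converge to the same $\eta$. Call the resulting map $F\colon \pa_u G \to \pa\hat{G}$.

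For surjectivity, given $\eta \in \pa \hat{G}$, pick a $\hat{\Gamma}_G$-geodesic ray $\alpha$ with $\alpha(\infty) = \eta$ and construct a path $\tilde{\alpha}$ in $\Gamma_G$ by replacing each consecutive pair of cone edges through the cone vertex for $gH_i$ by a $\Gamma_G$-geodesic segment inside the coset $gH_i$ (possible since $gH_i$ is a subgroup coset). Using uniform quasiconvexity of the cosets $gH_i$ in $\Gamma_G$ (Proposition \ref{qc prop}(2), and its consequences), the lifted path $\tilde{\alpha}$ is unbounded in $\Gamma_G$. By properness of $\Gamma_G$ and Arzelà--Ascoli, the $\Gamma_G$-geodesics from a basepoint to $\tilde{\alpha}(n)$ subconverge to a geodesic ray $\gamma$ with $\gamma(\infty) = \xi \in \pa G$. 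Since $\tilde{\alpha}$ tracks $\alpha$ in $\hat{\Gamma}_G$ (up to a bounded correction inside cosets) and has infinite $\hat{\Gamma}_G$-diameter, so does $\gamma$; by Proposition \ref{geod in coned space}(1) this forces $\xi \in \pa_u G$, and Proposition \ref{geod in coned space}(2) gives $F(\xi) = \eta$.

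For injectivity, assume $\xi_1 \neq \xi_2$ in $\pa_u G$ with $F(\xi_1) = F(\xi_2) = \eta$, and let $\gamma_1, \gamma_2$ be corresponding geodesic rays in $\Gamma_G$. Combining the well-definedness argument applied to both rays yields $Hd_{\hat{\Gamma}_G}(\gamma_1, \gamma_2) \leq D$ for some $D$. Thus for each $n$ there is $m_n$ with a $\hat{\Gamma}_G$-path of length $\leq D$ from $\gamma_1(n)$ to $\gamma_2(m_n)$, which unfolds to a $\Gamma_G$-path using at most $D$ segments through cosets $g_jH_{i_j}$. Since $\xi_1 \neq \xi_2$ in $\pa G$, the geodesic $[\gamma_1(n), \gamma_2(m_n)]_{\Gamma_G}$ passes within a bounded distance of the basepoint and has $\Gamma_G$-length tending to infinity. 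By hyperbolicity of $\Gamma_G$ and quasiconvexity of the cosets, the unfolded path fellow-travels this geodesic, forcing some coset $g_{j(n)} H_{i_{j(n)}}$ to come uniformly close to the basepoint while the ray $\gamma_1$ spends arbitrarily large $\Gamma_G$-intervals in a bounded neighborhood of that coset; this contradicts $\xi_1 \in \pa_u G$ via Proposition \ref{geod in coned space}(1).

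Finally, continuity in both directions follows from a sequential argument: if $\xi_n \to \xi$ in $\pa_u G \subset \pa G$, then geodesic rays $\gamma_n \to \gamma$ uniformly on compacta (Arzelà--Ascoli), and Proposition \ref{geod in coned space}(2) applied to suitable quasigeodesic rays in $\hat{\Gamma}_G$ at bounded Hausdorff distance yields $F(\xi_n) \to F(\xi)$. Continuity of $F^{-1}$ is similar using the surjectivity construction. The main obstacle is the injectivity step, where passing from bounded $\hat{\Gamma}_G$-Hausdorff distance to a genuine geometric contradiction in $\Gamma_G$ requires careful control of the lifted paths through the quasiconvex cosets, specifically the bounded coset penetration phenomenon implicit in Proposition \ref{coned space hyp}.
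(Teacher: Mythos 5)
First, a point of context: the paper offers no proof of Theorem \ref{u-boundary} at all; it is imported from \cite[Theorem 3.2]{dow-tay-cosurface} (see also \cite[Theorem 6.7]{ab-mn-acy}), so your attempt can only be measured against those sources, not against anything in this paper. Your skeleton (well-definedness via Proposition \ref{geod in coned space}(2), then surjectivity, injectivity, continuity) matches the broad outline of those proofs, and your well-definedness step is essentially correct. But there is a genuine missing ingredient, which you half-acknowledge in your last sentence: every step after well-definedness needs the fact that images of $\Gamma_G$-geodesic \emph{segments} in $\hat{\Gamma}_G$ are unparametrized quasigeodesics with uniform constants (this is the engine of the Kapovich--Rafi/Dowdall--Taylor approach). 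That fact is \emph{not} ``implicit in Proposition \ref{coned space hyp}'' --- that proposition asserts only hyperbolicity of $\hat{\Gamma}_G$ --- and it is not a ``bounded coset penetration phenomenon'': BCP in Farb's sense requires malnormality-type hypotheses and fails for coning off arbitrary quasiconvex subgroups; the unparametrized-quasigeodesic property is the weaker statement that survives and that you actually need. Proposition \ref{geod in coned space} as quoted is a dichotomy for \emph{rays} only, so it cannot control the segments $[\gamma_1(n),\gamma_2(m_n)]$ in your injectivity step, nor the geodesics $[1,\tilde{\alpha}(n)]$ whose Arzel\`a--Ascoli limit you take in surjectivity; without the segment version, nothing forces that limit ray to track $\alpha$ in $\hat{\Gamma}_G$, so surjectivity does not close. (A minor additional slip: $\hat{\Gamma}_G$ is not proper and $\partial\hat{G}$ is by convention the quasigeodesic boundary, so you cannot assume $\eta$ is represented by a genuine $\hat{\Gamma}_G$-geodesic ray.)

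Second, two steps fail as written even granting the setup. In injectivity, your concluding mechanism --- ``some coset comes uniformly close to the basepoint while $\gamma_1$ spends arbitrarily large $\Gamma_G$-intervals in a bounded neighbourhood of that coset'' --- is not justified by the fellow-travelling you establish. What does follow, and what suffices, is different: each unfolded path has uniformly bounded $\hat{\Gamma}_G$-diameter (it has at most $D$ pieces, each lying $k$-close to a single coset, and each coset has $\hat{\Gamma}_G$-diameter $2$), so combining the thin-polygon estimate with the fact that $\sigma_n$ passes $R$-close to $1$ yields a \emph{uniform} bound on $\hat{d}(1,\gamma_1(n))$, contradicting the infinite $\hat{\Gamma}_G$-diameter of $\gamma_1$. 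You should argue that way instead. Finally, continuity in both directions is asserted rather than proved, and it is not routine: convergence of rays on compacta in $\Gamma_G$ says nothing about convergence in $\partial\hat{G}$, because coning collapses Gromov products (two rays can agree for an arbitrarily long $\Gamma_G$-time inside a single coset, which contributes $\hat{d}$-distance only $2$); one needs that the projected rays go to infinity in $\hat{\Gamma}_G$ --- true precisely because the limit point lies in $\partial_u G$ --- together with the unparametrized-quasigeodesic property. And since $\partial_u G$ is in general not compact, continuity of $F^{-1}$ cannot be deduced for free from continuity of $F$; it requires its own argument.
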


Suppose $G$ is a hyperbolic group and $\{H_i:1\le i\le l\}$ are quasiconvex subgroups of $G$.
Suppose $\Gamma_G$ is the Cayley graph of $G$ with respect to a finite generating set. Let $\hat{\Gamma}_G$
be the graph obtained from $\Gamma_G$ by coning off all the cosets of the subgroups $H_i$, $1\leq i\leq l$.
We note that the isometric action of $G$ on $\Gamma_G$ naturally extends to an action of $G$ on
$\hat{\Gamma}_G$ by isometries.

\begin{prop}\label{lem-bounded proj in hat}
Given any $g\in G$, its action on $\hat{\Gamma}_G$ satisfies exactly one of the following:

(1) $g$ induces an elliptic isometry on $\hat{\Gamma}_G$ i.e.
$diam\{\pi(g^n):n\in\Z\}$ is finite in $\hat{\Gamma}_G$. This happens
if and only if some power of $g$ is conjugate to an element of $H_i$ for some $i\in\{1,\cdots,l\}$.

(2) $g$ acts by a loxodromic isometry on $\hat{\Gamma}_G$.
In particular, in this case, $n\mapsto g^n$, $n\in \Z$ is a quasigeodesic line in $\hat{\Gamma}_G$.
\end{prop}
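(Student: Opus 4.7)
The plan is to dichotomize on whether some power of $g$ is conjugate into one of the $H_i$, and to show that this dichotomy corresponds exactly to elliptic versus loxodromic behavior of $g$ on $\hat{\Gamma}_G$. These two possibilities are mutually exclusive and exhaustive, so proving each implication suffices.

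\emph{Case A (some $g^n$ conjugates into some $H_i$).} Suppose $fg^nf^{-1}=h\in H_i$, so that $g^{kn}=f^{-1}h^k f$ for every $k\in\Z$. The elements $\{f^{-1}h^k : k \in \Z\}$ all lie in the single left coset $f^{-1}H_i$, which has $\hat{\Gamma}_G$-diameter at most $2$ via its cone vertex. Combining this with a right multiplication by $f$ of word length $|f|$, one obtains $d_{\hat{\Gamma}_G}(1,g^{kn})\le 2|f|+2$ for every $k$. Adjoining the finite set $\{g^0,\ldots,g^{n-1}\}$ bounds the entire orbit $\{g^m:m\in\Z\}$ in $\hat{\Gamma}_G$, so $g$ is elliptic. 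The finite-order case fits here trivially (take $n$ the order of $g$ and $f=1$).

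\emph{Case B (no such power exists).} Then $g$ has infinite order, so by Lemma \ref{lem-infinite ele loxo} it acts loxodromically on $\Gamma_G$ with distinct fixed points $g^{\pm\infty}\in\pa G$. Lemma \ref{qc conjugate} converts the Case B hypothesis into the assertion that $g^{\pm\infty}\notin\Lambda_G(fH_i)$ for every $f\in G$ and every $i$. Applying Proposition \ref{geod in coned space} to geodesic rays $\gamma_\pm$ in $\Gamma_G$ from $1$ to $g^{\pm\infty}$, both rays are of infinite diameter in $\hat{\Gamma}_G$ and converge to points $\eta_\pm\in\pa\hat{G}$. Since $g$ shifts each $\gamma_\pm$ onto a subray Hausdorff close to itself, $g$ fixes both $\eta_+$ and $\eta_-$. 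Moreover, the orbit $\{g^n\}$ is at finite Hausdorff distance (in $\Gamma_G$, hence in $\hat{\Gamma}_G$) from $\gamma_+$ and so is unbounded in $\hat{\Gamma}_G$; thus $g$ is not elliptic.

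The main obstacle I anticipate is excluding parabolic behavior, i.e.\ ruling out the possibility $\eta_+=\eta_-$. I plan to invoke the homeomorphism $\pa_u G\to\pa\hat{G}$ of Theorem \ref{u-boundary}: under this identification $\eta_\pm$ correspond to the distinct points $g^{\pm\infty}\in\pa G$, so injectivity forces $\eta_+\neq\eta_-$. With two distinct fixed points in $\pa\hat{G}$ and $g$ non-elliptic on the hyperbolic space $\hat{\Gamma}_G$, Lemma \ref{lemma lox} yields that $g$ is loxodromic on $\hat{\Gamma}_G$, and the definition of loxodromic isometry (cf.\ Lemma \ref{char loxo isom}) gives that $n\mapsto g^n$ is a quasi-isometric embedding $\Z\hookrightarrow\hat{\Gamma}_G$, completing the proof.
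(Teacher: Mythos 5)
Your proof is correct and follows essentially the same route as the paper: the coset-of-diameter-$2$ argument for ellipticity, then Lemma \ref{lem-infinite ele loxo}, Lemma \ref{qc conjugate}, Proposition \ref{geod in coned space}, Theorem \ref{u-boundary} (to rule out $\eta_+=\eta_-$), and Lemma \ref{lemma lox} for the loxodromic case. Your reorganization as a mutually exclusive dichotomy (Case A $\Rightarrow$ elliptic, Case B $\Rightarrow$ loxodromic, hence the ``if and only if'' follows formally) is only a cosmetic difference from the paper's presentation.
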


\begin{proof}
We shall denote the metric on $\Gamma_G$ by $d_G$ and the metric on $\hat{\Gamma}_G$ by $\hat{d}_G$.

(1) If $g\in G$ has finite order then clearly it acts as an elliptic isometry on $\hat{\Gamma}_G$.
Suppose $g\in G$ has infinite order.
Suppose $g^m\in xH_ix^{-1}$ for some $x\in G$, $m\in\Z\setminus\{0\}$ and $i\in\{1,\cdots,l\}$.
Since $Hd_G(xH_i,xH_ix^{-1})\le d_G(1,x)<\infty$ and $Hd(\lgl g\rgl, \lgl g^m\rgl)<\infty$, it follows that there is a constant $D$ depending on these Hausdorff
distances such that $\lgl g\rgl\subset N_D(xH_i)$ in $\Gamma_G$. As the inclusion $\Gamma_G\map \hat{\Gamma}_G$ is distance
decreasing, it follows that $\lgl g\rgl\subset N_D(xH_i)$ in $\hat{\Gamma}_G$ as well. But the diameter of $xH_i$ is $2$
in $\hat{\Gamma}_G$. Thus the diameter of the $\lgl g\rgl$ is finite in $\hat{\Gamma}_G$.

 Conversely, suppose $diam\{\pi(g^n):n\in\N\}$ is finite in $\hat{\Gamma}_G$.
We know by Lemma \ref{lem-infinite ele loxo} that $n\mapsto g^n$, $n\in \N$, defines a quasigeodesic ray in $\Gamma_G$.
Then by Proposition \ref{geod in coned space} $(1)$, $g^{\infty}\in \Lambda_G(xH_i)$
for some $x\in G$ and $1\leq i\leq l$. Then by Lemma \ref{qc conjugate}, $g^m\in xH_ix^{-1}$ for some $m\in \N$.

(2) Suppose $g\in G$ is an infinite order element that does not act as an elliptic isometry on
$\hat{\Gamma}_G$. It follows from (1) that $\alpha_{\pm}:n\mapsto g^n$ for $n\in \Z_{\geq 0}$ (respectively
$n\in \Z_{\leq 0}$ defines two quasigeodesic rays in $\Gamma_G$ that do not converge to the limit set
of any coset of any of the $H_i$'s. By Proposition \ref{geod in coned space} $(2)$, the sequences
$\{g^n\}_{n\in ]N}$ and $\{g^{-n}\}_{n\in \N}$ are converging to some points of $\partial \hat{\Gamma}_G$.
However, these points are not equal to each other by Theorem \ref{u-boundary}. On the other hand, clearly
these points are fixed points of $g$. Hence, we are done by Lemma \ref{lemma lox}.
\end{proof}

\begin{prop}\label{pingpong criteria}{\em (Criterion for pingpong pair)}
Let $G$ be a nonelementary hyperbolic group. Let $\{H_i:1\le i\le l\}$ be a finite collection of infinite and quasiconvex subgroups of $G$.
Let $g_1,g_2\in G$ be two infinite order elements of $G$ which have disjoint fixed point sets in $\partial G$.
Further, suppose that for all $i\in\{1,\cdots,l\}$, no power of $g_j$, $j=1,2$, is conjugate in $G$ to an element of $H_i$.
	Then there is $m\in\N$ such that
	
\begin{enumerate}
\item $\lgl g^m_1,g^m_2\rgl< G$ is a free subgroup of rank $2$, and

\item $\langle g^m_1,g^m_2\rangle\cap g^{-1}H_ig=\{1\}$ for all $g\in G$ and for all $i\in\{1,\cdots,l\}$.
\end{enumerate}
\end{prop}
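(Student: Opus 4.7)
\medskip

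\noindent\textbf{Proof plan.} The strategy is to pass to the coned-off graph $\hat\Gamma_G$ obtained from the Cayley graph $\Gamma_G$ by coning off every left coset of every $H_i$, $1\le i\le l$. By Proposition \ref{coned space hyp}, $\hat\Gamma_G$ is hyperbolic, and the isometric $G$-action on $\Gamma_G$ extends to an isometric $G$-action on $\hat\Gamma_G$. First I would establish that $g_1$ and $g_2$ act as independent loxodromic isometries on $\hat\Gamma_G$. By Proposition \ref{lem-bounded proj in hat}(1), an infinite-order element of $G$ acts elliptically on $\hat\Gamma_G$ precisely when some positive power is conjugate in $G$ into some $H_i$; the standing hypothesis rules this out for both $g_1$ and $g_2$, so Proposition \ref{lem-bounded proj in hat}(2) forces each $g_j$ to act loxodromically on $\hat\Gamma_G$. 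To see independence, note that the attracting and repelling fixed points of $g_j$ in $\partial\hat G$ are the limits in $\hat\Gamma_G$ of the sequences $\{g_j^n\}$ and $\{g_j^{-n}\}$ as $n\to\infty$; these sequences already form quasigeodesic rays in $\Gamma_G$ with endpoints $g_j^{\pm\infty}\in\partial G$, which lie in $\partial_u G$ precisely because $g_j$ is $\hat\Gamma_G$-loxodromic. Via the homeomorphism of Theorem \ref{u-boundary}, the four fixed points of $g_1,g_2$ in $\partial\hat G$ then correspond bijectively to the four distinct points $g_1^{\pm\infty},g_2^{\pm\infty}\in\partial G$, which are distinct by hypothesis.

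Next, I would apply Lemma \ref{lem-finding free qc} to the (not necessarily proper) hyperbolic space $\hat\Gamma_G$ equipped with the two independent loxodromics $g_1,g_2$. This produces $m\in\mathbb N$ such that $F:=\langle g_1^m,g_2^m\rangle\cong\mathbb F_2$ and the orbit map $F\to\hat\Gamma_G$ is a quasiisometric embedding, which is exactly conclusion (1). Moreover, any nontrivial $w\in F$ has infinite order in $F$, so via the qi-embedded orbit map the cyclic subgroup $\langle w\rangle$ is qi embedded in $\hat\Gamma_G$, and in particular $w$ acts as a loxodromic isometry on $\hat\Gamma_G$.

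For (2), suppose for contradiction that some nontrivial $w\in F$ lies in $g^{-1}H_ig$ for some $g\in G$ and some $i\in\{1,\ldots,l\}$. Then $w=w^{1}$ is conjugate in $G$ to an element of $H_i$; but by the previous paragraph $w$ acts loxodromically on $\hat\Gamma_G$, and Proposition \ref{lem-bounded proj in hat}(1) prohibits any positive power of a $\hat\Gamma_G$-loxodromic element from being conjugate in $G$ into any $H_i$. This contradiction proves (2). The one subtle step in this outline is the identification of the fixed points of $g_1,g_2$ on the two boundaries in order to promote the disjointness hypothesis on $\partial G$ into independence on $\partial\hat G$; everything else is a direct concatenation of Proposition \ref{lem-bounded proj in hat}, Lemma \ref{lem-finding free qc}, and the assumption on conjugacy of powers of $g_1,g_2$ into the $H_i$'s.
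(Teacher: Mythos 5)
Your proof is correct and follows essentially the same route as the paper's own argument: cone off the cosets of the $H_i$, use Proposition \ref{lem-bounded proj in hat} to make $g_1,g_2$ loxodromic on $\hat\Gamma_G$, use Theorem \ref{u-boundary} to transfer disjointness of fixed points from $\partial G$ to independence in $\partial\hat G$, apply Lemma \ref{lem-finding free qc}, and then rule out intersections with conjugates of the $H_i$ via Proposition \ref{lem-bounded proj in hat}(1). Your write-up is in fact slightly more explicit than the paper's (notably the boundary identification and the final contradiction), but the content is the same.
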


\begin{proof}
Let $\Gamma_G$ denote a Cayley graph of $G$ with respect to a finite generating set, and let
$\hat{\Gamma}_G$ denote the graph obtained from $\Gamma_G$ by coning off all the cosets of $\{H_i:1\le i\le l\}$
in $G$. It follows by Proposition \ref{lem-bounded proj in hat} $(2)$ that $g_1, g_2$ act by loxodromic
isometries on $\hat{\Gamma}_G$. By Theorem \ref{u-boundary} these elements are independent as
$g_1, g_2$ have no common fixed points in $\partial G$ and the fixed points of these isometries are
in $\partial_u G$. Then, by Lemma \ref{lem-finding free qc} for some $m\in \N$,
$g^m_1, g^m_2$ generate a free group, say $H$ and that the orbit map $H\map \hat{\Gamma}_G$
will be a qi embedding. It then follows that each element of $H\setminus \{1\}$ will act as a
loxodromic isometry on $\hat{\Gamma}_G$. Thus, once again, by Proposition \ref{lem-bounded proj in hat} $(1)$,
we have $H\cap gH_ig^{-1}=\{1\}$ for all $g\in G$ and $1\leq i\leq l$. This completes the proof of the
proposition.
\end{proof}

\subsection{An application of Proposition \ref{pingpong criteria} to free groups}

Now we obtain an application of the above criterion of pingpong pairs for free groups.
In this subsection $G$ is a free group with the standard free basis $S=\{x_i:1\le i\le m\}$
where $m\geq 2$. Let $\Gamma_G$ be the Cayley graph of $G$ with respect to $S$.
In this case, it is easy to see that the topology on $\pa G$ is the same as the
metric topology coming from the following metric.

\begin{defn}{\em (Metric on $\pa G$)}
Let $\alpha, \beta$ be two geodesic rays in $\Gamma_G$ starting from $1$
and let $\alpha(\infty)=\xi$, $\beta(\infty)=\eta$. Suppose that the segment
along which $\alpha, \beta$ overlap is of length $n$. Then we define
$d(\xi, \eta)=2^{-n}$
\end{defn}

The following lemma follows easily from the definition of the above metric.
\begin{lemma}\label{free lemma 1}
Let $w\in G$. Let $V_w\subset \pa G$ be the set of all $\xi\in \pa G$ such that
there is a geodesic ray $\alpha: [0,\infty)\map \Gamma_G$ with $\alpha(0)=1$,
$\alpha(\infty)=\xi$ and $[1,w]\subset \alpha$.

Then $V_w$ is a closed subset of $\pa G$ with nonempty interior.
\end{lemma}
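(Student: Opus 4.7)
The plan is to exploit the fact that $\Gamma_G$ is a tree (since $G$ is free on $S$) to reduce the statement to an elementary one about infinite reduced words. Between any two vertices of $\Gamma_G$ there is a unique geodesic, namely the path spelling out the unique reduced expression, and in particular every geodesic ray $\alpha : [0,\infty) \map \Gamma_G$ with $\alpha(0)=1$ is determined by an infinite reduced word over $S \cup S^{-1}$. Moreover, in a tree two distinct geodesic rays from the same basepoint diverge forever, so two such rays are asymptotic iff they coincide; hence we may identify $\pa G$ with the set of infinite reduced words, and under this identification $V_w$ is exactly the set of infinite reduced words beginning with the finite reduced word $w$.

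First I would check that $V_w \neq \emptyset$. Since $m \geq 2$ the alphabet $S \cup S^{-1}$ has at least four letters, so one can always append to $w$ a letter distinct from the inverse of its last letter, and iterate to produce an infinite reduced word extending $w$; this yields a point of $V_w$.

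Next I would show that $V_w$ is in fact clopen, which will give both closedness and nonempty interior at once. This falls straight out of the ultrametric $d(\xi,\eta) = 2^{-n}$. For any $\xi_0 \in V_w$ and any $\eta \in \pa G$ with $d(\xi_0,\eta) < 2^{-|w|}$, the common initial segment of the rays representing $\xi_0$ and $\eta$ has length strictly greater than $|w|$. Since that common segment is the initial portion of the ray to $\xi_0$, and the first $|w|$ edges of the ray to $\xi_0$ are $[1,w]$, it follows that the ray to $\eta$ also contains $[1,w]$, i.e.\ $\eta \in V_w$. Hence the open ball of radius $2^{-|w|}$ about $\xi_0$ sits inside $V_w$, so $V_w$ is open (and nonempty, hence has nonempty interior). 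The same argument applied to a point $\xi_0 \notin V_w$ shows that the complement is open as well, so $V_w$ is closed.

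I do not foresee any real obstacle: every step reduces to the tree structure of $\Gamma_G$ together with the ultrametric on $\pa G$. The only point deserving a moment's care is the equivalence between the condition ``$[1,w] \subset \alpha$'' on a geodesic ray $\alpha$ from $1$ and the associated infinite reduced word starting with $w$, but this is immediate once one invokes uniqueness of geodesics in a tree.
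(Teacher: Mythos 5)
Your proof is correct, and it is essentially the argument the paper has in mind: the paper omits a proof, stating only that the lemma ``follows easily from the definition of the above metric,'' and your argument is precisely that verification, using the tree structure of $\Gamma_G$ to identify $V_w$ with the cylinder set of infinite reduced words beginning with $w$ and then observing that the ball of radius $2^{-|w|}$ about any point lies entirely inside $V_w$ or entirely inside its complement. In fact you prove the slightly stronger statement that $V_w$ is clopen and nonempty, which immediately gives closedness and nonempty interior.
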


We recall that given $g,h\in G$, we say that the product $gh$ is reduced if
there is no cancellation in the expression for $gh$ in terms of the basis $S$,
or equivalently, $d_G(1,gh)=d_G(1,g)+d_G(g,gh)$ where $d_G$ is the metric
on the Cayley graph $\Gamma_G$.

\begin{lemma}\label{lem-var-free-grp}
Let $w\in G\setminus [G,G]$.
Then the closure of the set
$$\A=\{(wg)^{\infty}:g\in [G,G], \mbox{ and}\,  wg \,
\text{cyclically reduced and reduced} \}$$ in $\pa G$ has nonempty interior
in $\pa G$.
\end{lemma}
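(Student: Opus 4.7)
The plan is to produce a set of the form $V_p$ from Lemma \ref{free lemma 1} contained entirely in $\overline{\A}$; since $V_p$ has nonempty interior, so does $\overline{\A}$. Write $w = s_1 \cdots s_n$ in reduced form and pick letters $a, b \in S \cup S^{-1}$ with $a \neq s_n^{-1}$ and $b \neq a^{-1}$, which is possible because $|S \cup S^{-1}| = 2m \geq 4$. Set $p = wab$, a reduced word extending $w$.

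To show $V_p \sse \overline{\A}$, fix $\xi \in V_p$ and write its unique geodesic ray from $1$ as the infinite reduced word $\xi = w\, a\, b\, u_4\, u_5\, u_6 \cdots$. For each $k \geq 3$, let $\sigma_k = a\, b\, u_4 \cdots u_k$, so that $w\sigma_k$ is the length-$(n+k)$ reduced prefix of $\xi$. I seek $g_k = \sigma_k r_k$ satisfying: (i) the abelianization $\phi(r_k) = -\phi(\sigma_k)$, so that $g_k \in [G,G]$; (ii) the first letter of $r_k$ is not the inverse of the last letter of $\sigma_k$, so that $g_k$, and hence $wg_k = w\sigma_k r_k$, are reduced; (iii) the last letter of $r_k$ is not $s_1^{-1}$, so that $wg_k$ is cyclically reduced. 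Granting such an $r_k$, we have $(wg_k)^\infty \in \A$, and its unique geodesic representative is the infinite reduced word $wg_k\, wg_k\, wg_k \cdots$ (by cyclic reducedness), which agrees with $\xi$ in its first $n+k$ letters. Hence $d_{\pa G}((wg_k)^\infty, \xi) \leq 2^{-(n+k)} \map 0$ as $k \map \infty$, and so $\xi \in \overline{\A}$.

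Constructing $r_k$ is routine. Choose letters $y_1, y_2 \in S \cup S^{-1}$ with $y_1$ not equal to the inverse of the last letter of $\sigma_k$, and $y_2 \neq s_1^{-1}$; each choice is possible since at most one letter out of at least four must be avoided. Take $r_k = y_1 \, \rho_k \, y_2$, where $\rho_k$ is a reduced word with $\phi(\rho_k) = -\phi(\sigma_k) - \phi(y_1) - \phi(y_2)$, neither beginning with $y_1^{-1}$ nor ending with $y_2^{-1}$; one can explicitly take a product of the form $\prod_i x_i^{N_i}$, inserting a short buffer letter at either end if necessary to prevent cancellation. The modest case analysis in this step, which depends on the identities of the letters $u_k$ and $s_1$ and the chosen buffers, is the main technical nuisance; the hypotheses $m \geq 2$ and the flexibility of adding buffer letters make every case work. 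I note in passing that the hypothesis $w \notin [G,G]$ does not appear to be used in this argument; it may be needed only in the subsequent application of the lemma.
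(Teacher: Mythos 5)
Your proof is correct and follows essentially the same approach as the paper's: both rest on Lemma \ref{free lemma 1} and show that an entire set of the form $V_p$ lies in the closure of $\A$ by approximating each $\xi\in V_p$ with periodic points $(wg_k)^{\infty}$, where $g_k$ is a long prefix of $\xi$ (beyond $w$) followed by a correction word whose abelianization cancels that of the prefix. The differences are only tactical: you take $p=wab$ rather than $p=w$, which neatly removes the paper's case analysis on the first/last letters of $w$, and you build the correction word as $y_1\rho_k y_2$ with prescribed abelianization, where the paper instead uses the involution $\iota(x_i)=x_i^{-1}$ and the identity $u\,\iota(u)\in[G,G]$; the one point needing care in your version is that any extra ``buffer'' letters inside $\rho_k$ must be abelianization-neutral (e.g.\ inserted as conjugating pairs $z\cdots z^{-1}$), which the flexibility afforded by $m\ge 2$ does permit, and your side remark that the hypothesis $w\notin[G,G]$ is never used in the proof (only in the later application) is also accurate.
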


\begin{proof}
It is enough to show that the closure of $\A$ in $\pa G$ is the set $V_w$ defined
in Lemma \ref{free lemma 1}. 
For this we first make the following observation.

We note that for any cyclically reduced element $g\in G$ concatenation of the
geodesic segments $[g^n,g^{n+1}]$, $n\geq 0$ gives a geodesic ray starting from
$1$. Let $\gamma_g$ denote this geodesic ray. If $\xi\in \pa G$ and $\alpha$
is a geodesic ray in $\Gamma_G$ joining $1$ to $\xi$ and $\{g_n\}$
is a sequence of cyclically reduced elements then $g_n\map \xi$ if and only if
$g^{\infty}_n=\gamma_{g_n}(\infty)\map \xi$ as $n\map \infty$.

Thus we have $\A\sse V_m$. Now, let $\alpha$ be a geodesic ray starting from $1$ and
ending at a point of $V_w$. Suppose $\alpha(n_0)=w$. Now, for any $n\in \N$ we define
below a reduced element $g_n$ of $[G,G]$ such that $wg_n$ is reduced and cyclically reduced.
Let $\iota: G\map G$ be the automorphism of $G$ given by $\iota (x_i)=x^{-1}_i$ for all
$1\leq i\leq m$. We note that for all $g\in G$, $g.\iota(g)\in [G,G]$.

There two cases to consider to define $g_n$'s.

{\bf Case 1.} Suppose $w$ starts and ends with nonzero powers of
the same element of $S$. Without loss of generality suppose $w$ starts
and ends with some nonzero powers of $x_1$. Now, depending on $t=w^{-1}\alpha(n_0+1)$
there are two subcases to consider.

{\bf Subcase 1.1.} Suppose $t=x_1$ or $t=x^{-1}_1$.
Now, for all $k\in \N$ we look at the word $w_k=w^{-1}\alpha(n_0+k)$.
We note that $w_k=x^{r_1}_{p_1}x^{r_2}_{p_2}\ldots x^{r_i}_{p_i}$
where  $1\leq p_j\leq m$, $p_1=1$ and $r_1, r_2,\cdots, r_i$ are all nonzero integers.
Suppose there is subsequence of natural numbers $\{k_n\}$
such that  for all $n\in \N$, $w_{k_n}$ ends with a power of $x_2$. Then we define
$g_n$ as follows: If $w_{k_n}=x^{r_1}_{1}x^{r_2}_{p_2}\ldots x^{r_i}_{2}$ then we let
$g_n=w_{k_n}.\iota(w_{k_n})$. Then clearly $wg_n$ is reduced and cyclically reduced and
$[1,wg_n]\cap \alpha$ contains $[1,ww_{k_n}]$ as a subsegment, whence $(wg_n)^{\infty}\in\A$ and $(wg_n)^{\infty}\map\al(\infty)$ as $n\map\infty$.

On the other hand if for all but finitely many $w_k$'s end with a nonzero power of
$x_1$ then we define  $g'_n=w_n.x_2.\iota(w_n.x_2)$ for all $n\in \N$.
We note that clearly $\{g_n\}$ can be taken to be a suitable subsequence
of $\{g'_n\}$.

{\bf Subcase 1.2.} Suppose $t$ is either $x_2$ or $x^{-1}_2$.
We define $w_k$'s as in Subcase 1.1 and look for a sequence of natural numbers
$\{k_n\}$ such that $w_{k_n}$ ends with a power of $x_1$. If such a sequence
exists then we define $g_n =w_{k_n}.t^{-1}.\iota(w_{k_n}t^{-1})$.

On the other hand, if all but finitely many $w_k$'s end with $x_2$
then for all large $n$ we define
$g_n=w_n.x_1.t^{-1}.\iota(w_n.x_1.t^{-1})$.

{\bf Case 2.} Suppose $w$ starts and ends with the power of two different symbols.
In this case also the choices of $g_n$'s can be made in a similar manner. However,
we skip the details to avoid repetition, and instead leave it as a simple
exercise for the reader.

This completes the proof of Lemma \ref{lem-var-free-grp}.
\end{proof}

The following result is similar to \cite[Theorem $5.16$]{kapovich-nonqc}.
\begin{prop}{\em (Existence of pingpong pair)} \label{pingpong prop}
Suppose $G$ is a free group of rank $\geq 2$ and $\{E_i\}_{1\leq i\leq l}$ are some finitely generated, infinite
index subgroups of $G$. Let $w\in G\setminus [G,G]$ and let
$A=\{wg: g\in [G,G], wg\, \mbox{reduced and cyclically reduced}\}$.
Then there are $g_1,g_2\in A$ and $k\in \N$ such that

(1) $\lgl g^k_1, g^k_2\rgl\isom \mathbb F_2$.

(2) $\lgl g^k_1, g^k_2\rgl\cap \, xE_ix^{-1}=\{1\}$ for all $x\in G$ and $1\leq i\leq l$.
\end{prop}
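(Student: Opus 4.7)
The plan is to reduce the statement to Proposition~\ref{pingpong criteria} applied to two carefully chosen elements of $A$. First I would note that without loss of generality each $E_i$ is infinite: since $G$ is torsion-free, a finite $E_i$ equals $\{1\}$ and condition~(2) is vacuous for that $i$. Each remaining $E_i$ is a finitely generated subgroup of the free group $G$, hence quasiconvex, and is of infinite index by hypothesis. By Lemma~\ref{prop-finite-union-nowheredense}, each $\Lambda_G(xE_i)=\Lambda_G(xE_ix^{-1})$ is nowhere dense in $\partial G$, so the set $B:=\bigcup_{x\in G,\,1\leq i\leq l}\Lambda_G(xE_i)$ is meager in the compact metrizable space $\partial G$ by the Baire category theorem. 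By Lemma~\ref{qc conjugate}, for an infinite-order element $u\in G$, no power of $u$ is conjugate in $G$ to an element of any $E_i$ if and only if $u^\infty\notin B$.

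I would then record a useful free-group observation: two distinct elements $u_1,u_2\in A$ automatically have disjoint fixed point sets in $\partial G$. Indeed, if they shared a boundary fixed point, then in the Cayley tree $\Gamma_G$ they would lie in a common maximal cyclic subgroup $\langle v\rangle$, so $u_j=v^{k_j}$. Passing to abelianizations yields $k_1 v_{ab}=w_{ab}=k_2 v_{ab}$; since $w\notin[G,G]$ forces $w_{ab}\neq 0$, this gives $v_{ab}\neq 0$ and hence $k_1=k_2$, contradicting $u_1\neq u_2$. Consequently it suffices to exhibit two distinct $u_1,u_2\in A$ with $u_j^\infty\notin B$ for $j=1,2$: Proposition~\ref{pingpong criteria} applied to $G$, the quasiconvex subgroups $\{E_i\}$, and this pair then produces the required $k\in\N$ making $\langle u_1^k,u_2^k\rangle\cong\mathbb{F}_2$ intersect every conjugate of every $E_i$ trivially, and setting $g_1=u_1,g_2=u_2$ completes the proof.

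To produce such $u_1,u_2$, I would start from Lemma~\ref{lem-var-free-grp}: the closure $\overline{\A}$ contains a nonempty open set $U\subseteq\partial G$, and since $B$ is meager, $U\setminus B$ is comeager (hence dense) in $U$. Fixing $\xi\in U\setminus B$ and a geodesic ray $\alpha$ in $\Gamma_G$ from $1$ through $w$ to $\xi$, the construction in the proof of Lemma~\ref{lem-var-free-grp} yields approximants $u_n=wg_n\in A$ with $u_n^\infty\to\xi$. The main technical obstacle is that convergence alone does not force $u_n^\infty\notin B$, because a countable dense subset of $U$ need not meet every comeager subset of $U$. The plan is to exploit flexibility in the construction by inserting a variable $c\in[G,G]$ inside $g_n=w_{k_n}\cdot c\cdot\iota(w_{k_n})$ while preserving cyclic reducedness and membership in the coset $[G,G]w$, and to use that for each quadruple $(x,i,m,e)$ with $e\in E_i$ the equation $(wg_n^{(c)})^m=xex^{-1}$ pins down at most one ``bad'' value of $c$ (by uniqueness of $m$-th roots in a free group). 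A diagonal enumeration over these countably many bad quadruples then produces infinitely many admissible $c$'s, and hence infinitely many distinct elements of $A$ with $u^\infty\notin B$; any two of them serve as $u_1,u_2$, and Proposition~\ref{pingpong criteria} applies to complete the proof.
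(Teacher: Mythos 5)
Your reduction is sound and, up to that point, follows the same route as the paper: discarding finite $E_i$'s, replacing ``no power conjugate into $E_i$'' by the limit-set condition via Lemma~\ref{qc conjugate}, observing that two \emph{distinct} elements of $A$ automatically have disjoint fixed-point pairs (the abelianization argument, using $w\notin[G,G]$, is exactly right and is also what the paper implicitly uses), and feeding the pair into Proposition~\ref{pingpong criteria}. You also correctly identified the real difficulty: $\mathcal A=\{u^\infty:u\in A\}$ is countable while $B=\bigcup_{x,i}\Lambda_G(xE_i)$ is only meager, so Baire category alone says nothing about $\mathcal A\setminus B$. However, your proposed fix has a genuine gap. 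Uniqueness of $m$-th roots does show that each quadruple $(x,i,m,e)$ excludes at most one value of $c$, but the quadruples range over $x\in G$, $e\in E_i$, $m\in\Z\setminus\{0\}$, so there are countably infinitely many of them, while your candidate values of $c$ also form only a countable set. A countable family of constraints, each excluding one candidate, can perfectly well exclude \emph{every} candidate (the $j$-th quadruple may kill the $j$-th candidate); no diagonal enumeration can manufacture a survivor. To make a counting argument of this shape work you would need either finitely many effective constraints or uncountably many candidates, and you have neither.

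The paper closes exactly this gap with a finiteness observation that your proposal is missing, and it is the key point of the whole proof. Every $u\in A$ is cyclically reduced, so the concatenation of the segments $[u^m,u^{m+1}]$ is a geodesic line through the identity vertex joining $u^{-\infty}$ to $u^{\infty}$. If some power of $u$ lies in $xE_ix^{-1}$, then $u^{\pm\infty}\in\Lambda_G(xE_i)$, and by Proposition~\ref{qc prop}(9) this geodesic line lies in the $D$-neighbourhood of $xE_i$ for a constant $D$ depending only on $\delta$ and the quasiconvexity constants of the $E_i$; in particular $d_G(1,xE_i)\le D$. So for elements of $A$ the only cosets that can ever witness badness are those in the \emph{finite} set $\mathcal F_D=\{xE_i: d_G(1,xE_i)\le D\}$. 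The bad set $K=\bigcup_{xE_i\in\mathcal F_D}\Lambda_G(xE_i)$ is then a finite union of closed nowhere dense sets, hence closed and nowhere dense (Corollary~\ref{cor-baire category}); since $\overline{\mathcal A}$ has nonempty interior $U$ (Lemma~\ref{lem-var-free-grp}) and $\mathcal A$ is dense in $U$, the nonempty open set $U\setminus K$ meets $\mathcal A$ in an infinite set, and every $u\in A$ with $u^\infty\in U\setminus K$ is good. This topological argument is what should replace your root-counting step; with it in place, the rest of your proof (disjoint fixed points for distinct elements, then Proposition~\ref{pingpong criteria}) goes through verbatim.
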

\begin{proof}
By Proposition \ref{pingpong criteria} it is enough to find two
elements $g_1,g_2\in A$ such that $\{ g^{\pm \infty}_1\}\cap \{ g^{\pm \infty}_2\}=
\emptyset$ and no power of either $g_1$ or $g_2$ belong to a conjugate of
any of the $E_i$'s. However, we make the following notes first. Suppose $x\in A$.

\smallskip
(1)  As $x$ is cyclically reduced, concatenation of the
geodesic segments $[x^m,x^{m+1}]$ gives the geodesic line, say $\gamma_x$, in
$\Gamma_G$ joining $x^{\pm \infty}$ which passes through $1$.

\smallskip
(2) By Lemma \ref{qc conjugate} some nonzero power of $x$ belongs to
$gE_ig^{-1}$ for some $g\in G$ and $1\leq i\leq l$ if and only if $x^{\pm\infty}\in \Lambda_G(gE_i)$.
However, each $\Lambda_G(gE_i)$ is nowhere dense in $\pa G$ by Lemma
\ref{prop-finite-union-nowheredense}. We note that the set of cosets
$\{gE_i: g\in G, 1\leq i\leq l, d_G(1,gE_i)\leq D\}=\mathcal F_D$, say, is finite for all $D\geq 0$.
Thus the set $\cup_{gE_i\in \mathcal F_D} \Lambda_G(gE_i)$ is also nowhere dense in $\partial G$
by Baire category theorem (Corollary \ref{cor-baire category}). Now, the closure of $\mathcal A=\{g^{\infty}: g\in A\}$
has nonempty interior in $\pa G$ by Lemma \ref{lem-var-free-grp}.
Thus the closure of $\A\setminus \cup_{gE_i\in \mathcal F_D} \Lambda_G(gE_i)$ has nonempty interior as well.
Finally, we note that any nonempty open subset of $\pa G$ has infinitely many points.
It follows that the set $\A\setminus \cup_{gE_i\in \mathcal F_D} \Lambda_G(gE_i)$ is infinite.
Hence, we may choose $g_1, g_2$ from $\A\setminus \cup_{gE_i\in \mathcal F_D} \Lambda_G(gE_i)$
satisfying the desired properties.
\end{proof}

\section{Proof of the main theorem}\label{sec-main thm}

We shall need the following final piece of results for the proof of the main theorem
(Theorem \ref{main malnormal thm}).
\begin{prop}\label{main prop}
Suppose $G$ is a finitely generated, virtually free group which is not virtually cyclic. Let $H$ be a normal, free
subgroup of $G$ of finite index. Suppose $\mathcal E$ is a finite set, possibly empty, of
subgroups of $H$ such that
each $E\in \mathcal E$ is infinite, finitely generated and of infinite index in $H$.

Then there is a free subgroup $H_1<H$ of rank at least $2$ such that
\begin{enumerate}
\item $H_1\cap h^{-1}Eh=\{1\}$ for all $h\in H$ and for all $E\in \mathcal E$,
\item There is a finite normal subgroup $A$ of $Comm_G(H_1)$
and a finitely generated free subgroup $\mathbb F$ such that $H_1<\mathbb F$ and
$Comm_G(H_1)=\mathbb F A$. Consequently, we have a natural isomorphism
$Comm_G(H_1)\isom \mathbb F\ltimes A$ where $\mathbb F$ acts on $A$ by conjugation.
\end{enumerate}
\end{prop}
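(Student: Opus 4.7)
The plan is to construct $H_1$ by applying Proposition \ref{pingpong prop} to the free group $H$ and the collection $\mathcal{E}$, while imposing additional constraints beyond (1) to control $K := Comm_G(H_1)$. First I would introduce the auxiliary subgroup $G_0 := H\cdot Z_G(H)$, where $Z_G(H)$ is the centralizer of $H$ in $G$. Since $H$ is free of rank $\geq 2$ (because $G$ is not virtually cyclic), $Z(H)=\{1\}$ gives $Z_G(H)\cap H=\{1\}$, so $Z_G(H)$ is finite and $G_0=H\times Z_G(H)$ is a finite-index normal subgroup of $G$. Crucially, elements of $Z_G(H)$ act trivially on $\pa H = \pa G$, so within any $G_0$-coset of $G$ all elements induce the same homeomorphism of $\pa H$.

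The strategy is to arrange that $H_1 < H$ is free of rank $2$ satisfying (1) and two further properties: (a) $H_1$ is malnormal in $H$; and (b) for every representative $x$ of a nontrivial coset of $G/G_0$, the subgroup $xH_1x^{-1}$ is not commensurable with $H_1$ in $G$. For (a) I would combine Kapovich's Theorem \ref{malnormal in free} (used to pre-select a malnormal, free subgroup $H' < H$ of rank $\geq 2$) with Proposition \ref{pingpong prop} applied inside $H'$, and then transfer malnormality to $H$ via Lemma \ref{malnormal transitive}. For (b), a sufficient condition at the abelianization level is that $\phi_x(\bar w) \notin \mathbb{Q}\bar w$ in $H^{ab}\otimes\mathbb{Q}$ for each relevant $x$, where $w\in H\setminus[H,H]$ is the cyclically reduced element feeding into Proposition \ref{pingpong prop} and $\phi_x\in GL_n(\Z)$ is the induced action on $H^{ab}$; the image of $G/G_0$ in $GL_n(\Z)$ is finite (Theorem \ref{finite gp inj} applied to finite subgroups of $\mathrm{Aut}(H)$), and admits such a $\bar w$ for generic $w$, with a refinement needed when $\pm I$ occurs from a nontrivial coset.

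Assuming (a) and (b), the structure of $K$ follows. Property (a) combined with Corollary \ref{qc sbgp commens} forces $K=N_G(H_1)$ and $K\cap H = H_1$: if $xH_1x^{-1}$ were commensurable with $H_1$ but distinct from it, one picks $h\in xH_1x^{-1}\setminus H_1 \subset H\setminus H_1$ and observes that $hLh^{-1}\cap L=\{1\}$ (where $L=H_1\cap xH_1x^{-1}$), contradicting the fact that both $L$ and $hLh^{-1}$ are finite-index in $xH_1x^{-1}$. Property (b) then rules out any $x\in G\setminus G_0$ from commensurating $H_1$, so $K\subset G_0$. Decomposing any $k\in K$ uniquely as $k=hz$ with $h\in H$, $z\in Z_G(H)$, the equation $kH_1k^{-1}=H_1$ reduces to $hH_1h^{-1}=H_1$, forcing $h\in N_H(H_1)=H_1$. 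Hence $K=H_1\cdot Z_G(H)$, and taking $\mathbb{F}=H_1$ (free of rank $2$) and $A=Z_G(H)$ (finite, normal in $G$, commuting with $H\supset H_1$) gives $Comm_G(H_1)=\mathbb{F}\rtimes A$ (in fact a direct product).

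The main obstacle I anticipate is the simultaneous enforcement of (a), (b), and (1) using only the restricted class of generators produced by Proposition \ref{pingpong prop}. Malnormality (a) is delicate, because ping-pong alone does not yield malnormal subgroups, so Kapovich's construction must be carefully composed with (or replaced by a refinement of) the ping-pong procedure. The genericity condition (b) is subtle when $\pm I$ appears in the image of $G/G_0 \to GL_n(\Z)$ from a nontrivial coset, in which case the abelianization is too crude and one must pass to deeper quotients of $H$ (for instance a second nilpotent quotient, or work directly with the limit-set dynamics on $\pa H$). Theorem \ref{finite gp inj} underpins these genericity arguments by reducing automorphism-theoretic conditions to controllable matrix-theoretic ones.
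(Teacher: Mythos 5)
Your strategy is not the paper's, and as written it contains two genuine gaps --- both of which you flag, neither of which you close. First, condition (b) cannot be established by your abelianization criterion. You need \emph{every} $x\in G\setminus G_0$ to fail to commensurate $H_1$, and such $x$ may have infinite order in $G$; what is finite is only the image of $G/G_0$ in $\mathrm{Out}(H)$, not any relevant subgroup of $\mathrm{Aut}(H)$. Theorem \ref{finite gp inj} concerns finite subgroups of $\mathrm{Aut}(\mathbb F_n)$, so it cannot rule out a nontrivial coset $xG_0$ acting trivially on $H^{ab}$: excluding that possibility is equivalent to injectivity of the finite group $G/G_0\hookrightarrow \mathrm{Out}(H)\map GL_n(\Z)$, i.e.\ to torsion-freeness of the kernel of $\mathrm{Out}(\mathbb F_n)\map GL_n(\Z)$, a strictly deeper fact (it needs realization of finite-order outer automorphisms by graph automorphisms) which is not among the paper's tools. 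Note that since $Z(\mathbb F_n)=1$, every finite cyclic subgroup of $\mathrm{Out}(\mathbb F_n)$ is realized by some extension $G$ of $\mathbb F_n$, so this is exactly the statement your argument requires. The scalar case you mention ($\phi_x=-I$, e.g.\ $G=\mathbb F_n\rtimes\lgl\sigma\rgl$ with $\sigma:x_i\mapsto x_i^{-1}$) is likewise left open, and there your non-proportionality condition is unachievable for \emph{any} choice of $w$. Second, the malnormality step (a) is broken as described: Proposition \ref{pingpong prop} applied inside a pre-selected malnormal $H'<H$ produces a subgroup that need not be malnormal in $H'$ (the group $\lgl g_1^k,g_2^k\rgl$ meets its conjugate by $g_1\in H'$ in an infinite subgroup containing $\lgl g_1^k\rgl$, while $g_1$ lies outside it), so Lemma \ref{malnormal transitive} has nothing to transfer. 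Reversing the order (Kapovich's Theorem \ref{malnormal in free} applied to the ping-pong subgroup) does yield malnormality and preserves (1), but then you lose the control of the image in $H^{ab}$ that (b) depends on: Kapovich's theorem gives no guarantee that the malnormal subgroup has nontrivial image in $H^{ab}$, and if that image vanishes your eigenvector argument says nothing.

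These difficulties are self-inflicted, because you are proving a statement stronger than the proposition: you aim for $Comm_G(H_1)=H_1\times Z_G(H)$, while conclusion (2) only demands $Comm_G(H_1)=\mathbb F A$ with $H_1<\mathbb F$, possibly proper. The paper exploits exactly this slack. It controls only the \emph{torsion} of $Comm_G(H_0)$: a finite-order $g\in Comm_G(H_0)\setminus Z_G(H)$ gives a finite cyclic subgroup $\lgl\phi(g)\rgl\le \mathrm{Aut}(H)$, to which Theorem \ref{finite gp inj} genuinely applies, and the eigenvector-avoiding choice of $v$ forces $\pi(H_0)\cap\pi(gH_0g^{-1})=(0)$, contradicting commensuration. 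Hence all torsion of $Comm_G(H_0)$ lies in $A=Z_G(H)$, the quotient $Comm_G(H_0)/A$ is torsion-free and virtually free, hence free by Stallings' theorem \cite{stallings-freegp}, and splitting the quotient map yields $Comm_G(H_0)=\mathbb F A\isom\mathbb F\ltimes A$, with $H_1=H_0\cap\mathbb F$ and $Comm_G(H_1)=Comm_G(H_0)$ by Lemma \ref{lemma: commensurator}. No malnormality, no analysis of infinite-order commensurators, and no $\mathrm{Out}$-level input is needed; Kapovich's theorem enters only later, in Step 2 of Theorem \ref{main malnormal thm}. (In fairness, your $\pm I$ worry also afflicts the paper's own proof: if $-I$ lies in the image $L$, the required eigenvector-avoiding $v$ does not exist and Claim 1 there needs a supplementary argument; but your route demands strictly more than the paper's even in the generic case.)
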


\begin{proof}
{\bf Step 1.} As the first step of the proof, we indicate how to construct a free subgroup $H_1$
of $H$ such that $Comm_G(H_1)$ is of the form described in (2).

Since $H$ is normal in $G$, we may consider the action of $G$ on $H$ by conjugation.
This gives a homomorphism $\phi:G\map Aut(H)$. Clearly, $\phi$ restricted to $H$ is injective.
As $G/H$ is finite it follows that $Ker(\phi)= Z_{G}(H)$ is finite. Let $H^{ab}$ denote the abelianization of $H$.
Let $\bar{\phi}:G \map Aut(H^{ab})$ be the composition of $\phi$ and the natural map $Aut(H)\map Aut(H^{ab})$.
Note that $\bar{\phi}(H)$ is trivial. As $H$ has finite index in $G$, it follows that the
image, say $L$, of $\bar{\phi}$ is finite.

Let $\{x_1,x_2,\cdots, x_m\}$ be a free basis of $H$. Let $\pi:H\map H^{ab}$ be the natural
quotient map. Let $\{\pi(x_1)=\bar{x}_1,\pi(x_2)=\bar{x}_2,\cdots, \pi(x_n)=\bar{x}_m\}$
be the corresponding basis of the free abelian group $H^{ab}$. We note that by hypothesis $m\geq 2$.
We shall use additive notation for $H^{ab}$. Now, since $L$ is finite, we may choose $N\in\N$
such that $v=\pi(x_1x^N_2)=\bar{x}_1+N\bar{x}_2$ is not an eigen vector of any element of
$L\setminus \{1\}$.

\smallskip
{\bf Claim} $1$: {\em If $H_0<H$ is such that $\pi(H_0)=\lgl av\rgl$ for some $a\in \N$,
then $Z_G(H)$ is the normal subgroup of $Comm_G(H_0)$ which consists of all the
finite order elements of $Comm_G(H_0)$. }

{\em Proof of Claim 1}: First of all, we note that $Ker(\phi)=Z_G(H)<Comm_G(H_0)$ and all elements of
$Z_G(H)$ are of finite order as $Z_G(H)$ is finite noted above.

On the other hand, suppose $g\in Comm_G(H_0)$ is a finite order element, and suppose
$g\not \in Ker(\phi)=Z_G(H)$. Then by Theorem \ref{finite gp inj},
$\bar{\phi}(g)\neq 1$. Let $\bar{g}=\bar{\phi}(g)$ and $\bar{H}_0=\pi(H_0)$.
By the choice of $N$, $av$ is not an eigen vector of $\bar{g}$. Now, as $\pi$
is clearly equivariant under the action of $G$ on $H$ and $H^{ab}$ (through $\phi$
and $\bar{\phi}$ respectively), we have
$\pi(\phi(g).H_0)= \pi(gH_0g^{-1})=\lgl \bar{\phi}(g)(av)\rgl$.
It follows that $\pi(H_0)\cap \pi(gH_0g^{-1})=(0)$, whence
$g\not \in Comm_G(H_0)$ $-$ a contradiction. Hence, any finite order element of
$Comm_G(H_0)$ is contained in $Z_G(H)$. This completes the proof of Claim $1$.

\smallskip
{\bf Claim $2$}: {\em Suppose $H_0<H$ is as in Claim $1$. Then there is (i) a (finite index) free subgroup
$\mathbb F$ of $Comm_G(H_0)$ and (ii) a finite, normal
subgroup $A$ of $Comm_G(H_0)$ such that $Comm_G(H_0)=\mathbb F A$. In particular,
$Comm_G(H_1)=Comm_G(H_0)\isom \mathbb F \ltimes A$ where $H_1= H_0\cap \mathbb F$.}

{\em Proof of Claim 2}: Let $A=Z_G(H)$. We have already noted that it is a finite group.
From the proof of  Claim 1, as $A$ contains all the finite order elements of $Comm_G(H_0)$,
it follows that $A$ is a normal subgroup of $Comm_G(H_0)$. Thus
$Comm_G(H_0)/A$ is a finitely generated, torsion free, virtually free
group. This means that it is a free group by \cite[Theorem $0.2$]{stallings-freegp}.
Hence the quotient homomorphism
$\pi: Comm_G(H_0)\map Comm_G(H_0)/A$ admits splitting homomorphism
$\psi: Comm_G(H_0)/A \map Comm_G(H_0)$ so that $\pi \circ \psi$ is the identity
map on $Comm_G(H_0)/A$. Let $\mathbb F$ be the image of $\psi$. Then clearly
$\mathbb F\cap A=(1)$ and $Comm_G(H_0)= \mathbb F A$. It follows that
$Comm_G(H_0)\isom \mathbb F \ltimes A$.

Finally, we note that both $H_0$ and $\mathbb F$ are finite index subgroups of
$Comm_G(H_0)$. Hence, so is $H_1= H_0\cap \mathbb F$ and therefore, (by Lemma \ref{lemma: commensurator})
$Comm_G(H_1)=Comm_G(H_0)=\mathbb F A\isom \mathbb F\ltimes A$.

If $\mathcal E$ were empty we would be done with the proof of the proposition by Step 1. If not then
we will need to proceed to the next step.

{\bf Step 2.} In the second step of the proof we construct $H_1<H$ so that
$H_1$ satisfies both (1) and (2) mentioned in the proposition. As in Step 1, we will ensure that
$\pi(H_1)$ is an infinite cyclic subgroup of $\lgl v\rgl$ so that property (2)
will follow from Claim $2$. As in Step 1, we work with the free basis $S=\{x_1, x_2, \cdots, x_m\}$
of $H$.

Let $w=x_1x^N_2$ where $N$ is as in Step 1. Let
$$A=\{wh\in H:\,h\in[H,H]\text{ and }wh\text{ is cyclically reduced}\}$$.
Then by Proposition \ref{pingpong prop}, one can find $g_1,g_2\in A$ such that

\begin{enumerate}
\item $\lgl g^k_1,g^k_2\rgl<H$ is a free subgroup of rank $2$ for some $k\in \N$, and

\item $\lgl g^k_1,g^k_2\rgl\cap h^{-1}Eh=\{1\}$ for all $h\in H$ and for all $E\in \mathcal E$.
\end{enumerate}
We let $H_0=\lgl g^k_1, g^k_2\rgl$. Then $H_0$ satisfies property (1) of the proposition. Note that
by construction of $H_0$, the image of $H_0$ under the quotient map $H\map H^{ab}$ is
the infinite cyclic group $\lgl kv\rgl$. Then as in Claim 2 of  Step 1, $H_1<H_0$ can be chosen which
satisfies both properties (1) and (2) of the proposition.
\end{proof}

\begin{theorem}\label{main malnormal thm}
Let $G$ be a nonelementary hyperbolic group. Let $H$ be a nonelementary, quasiconvex
subgroup of $G$. Then there is a free subgroup $H_1\simeq \mathbb F_2$ of $H$ such that
the following hold:
\begin{enumerate}
\item $Comm_G(H_1)$ is of the form $H_1\times A$ for some finite subgroup $A$ of $G$. 

\item $H_1A\isom H_1\times A$ is weakly malnormal (and quasiconvex) in $G$.
\end{enumerate}
\end{theorem}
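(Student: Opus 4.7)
The plan has three stages. \textbf{Stage 1 (reduction to a virtually free $K$).} Set $G_1 = Comm_G(H)$; by Proposition \ref{qc prop}(4), $[G_1 : H] < \infty$. Apply Lemma \ref{egh finite} to $H < G$ to obtain finitely many $H$-conjugacy representatives $E_1,\dots,E_l$ of the subgroups $H \cap H^g$ for $g \in \mathcal I_G(H)$; by Lemma \ref{commen lemma} those arising from $g \notin G_1$ have infinite index in $H$, and all are infinite quasiconvex subgroups of $H$ (Proposition \ref{qc prop}(8)). Applying Proposition \ref{pingpong criteria} inside the nonelementary hyperbolic group $H$ with the collection $\{E_i\}$ yields a rank-$2$ free quasiconvex subgroup $F \subset H$ with $F \cap h E_i h^{-1} = \{1\}$ for every $h \in H$ and $i$. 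For $g \in Comm_G(F)$, the intersection $F \cap F^g$ is infinite and hence $g \in \mathcal I_G(H)$; if furthermore $g \notin G_1$ then $H \cap H^g$ is $H$-conjugate to some $E_i$, forcing $F \cap F^g = \{1\}$, a contradiction. Thus $K := Comm_G(F) \subset G_1$, and by Proposition \ref{qc prop}(4) $F$ is finite-index in $K$, so $K$ is virtually free.

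\textbf{Stage 2 (applying Proposition \ref{main prop}).} Let $F^\ast = \bigcap_{k \in K/F} k F k^{-1}$ be the normal core of $F$ in $K$: a free, normal, finite-index subgroup of $K$ lying inside $F$. A second application of Lemma \ref{egh finite}, now to $F < G$, produces $F$-conjugacy representatives $E'_1,\dots,E'_p$ of $F \cap F^g$ for $g \in \mathcal I_G(F) \setminus K$, each of which is infinite, of infinite index in $F$, and finitely generated. With $\{f_j\}$ a set of coset representatives of $F/F^\ast$, put
\[
\mathcal E = \bigl\{\, f_j E'_i f_j^{-1} \cap F^\ast : 1 \le i \le p,\ 1 \le j \le [F:F^\ast] \,\bigr\}.
\]
Proposition \ref{main prop} applied to $(K, F^\ast, \mathcal E)$ returns a free subgroup $H_1^\circ \subset F^\ast$ of rank $\ge 2$, a finite normal subgroup $A \lhd Comm_K(H_1^\circ)$ with $A \subset Z_K(F^\ast)$, and a free subgroup $\mathbb F$ of $Comm_K(H_1^\circ)$ containing $H_1^\circ$ with $Comm_K(H_1^\circ) = \mathbb F \ltimes A$. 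Rerunning the Stage 1 argument with $H_1^\circ$ in place of $F$ (using Proposition \ref{main prop}(1) together with the $f_j$-decomposition of $\mathcal E$ to derive $H_1^\circ \cap f E'_i f^{-1} = \{1\}$ for all $f \in F$) gives $Comm_G(H_1^\circ) = Comm_K(H_1^\circ) = \mathbb F A$.

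\textbf{Stage 3 (final refinement and the main obstacle).} Let $\mathbb F_0 \lhd \mathbb F$ be the kernel of the conjugation homomorphism $\mathbb F \to Aut(A)$: a free, normal, finite-index subgroup of $\mathbb F$ on which $A$ acts trivially. A third application of Lemma \ref{egh finite}, now to $H_1^\circ < K$, yields representatives $E''_1,\dots,E''_r$ of the infinite-index classes of $H_1^\circ \cap (H_1^\circ)^g$ for $g \in \mathcal I_K(H_1^\circ) \setminus \mathbb F A$. Apply Proposition \ref{pingpong criteria} inside the nonelementary free group $H_1^\circ$ with $\{E''_k\}$ to pick a rank-$2$ free $K_1 \subset H_1^\circ$ with $K_1 \cap h E''_k h^{-1} = \{1\}$ for all $h \in H_1^\circ$ and $k$; then apply Theorem \ref{malnormal in free} to $K_1 \cap \mathbb F_0 < \mathbb F$ to choose a rank-$2$ free $H_1 \subset K_1 \cap \mathbb F_0$ that is malnormal in $\mathbb F$. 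Then (a) $H_1 \subset \mathbb F_0$ makes $H_1 A = H_1 \times A$; (b) malnormality of $H_1$ in $\mathbb F$, combined with $Comm_G(H_1) \subset K$ (Stage 1 argument applied to $H_1$) and $Comm_K(H_1) \subset \mathbb F A$ (the $E''_k$-triviality rules out $g \in K \setminus \mathbb F A$ via the infinite/finite-intersection dichotomy), forces $Comm_G(H_1) = H_1 A$; (c) for $g \notin H_1 A$, a case analysis ($g \notin K$ via Stage 1, $g \in K \setminus \mathbb F A$ via the $E''_k$-condition, and $g \in \mathbb F A \setminus H_1 A$ via malnormality of $H_1$ in $\mathbb F$ using $H_1^g = H_1^f$ when $g=fa$, $f \in \mathbb F, a \in A$) gives $H_1 \cap H_1^g = \{1\}$, yielding weak malnormality of $H_1 A = H_1 \times A$ in $G$. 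Quasiconvexity of $H_1 A$ in $G$ follows from quasiconvexity of $H_1$ (a finitely generated subgroup of the free qc subgroup $F$) together with Proposition \ref{qc prop}(7) and finiteness of $A$. \emph{The main obstacle} is converting the semidirect product $\mathbb F \ltimes A$ produced by Proposition \ref{main prop} into the sharp direct product $H_1 \times A$ required by the theorem, while also pinning the free part of $Comm_G(H_1)$ at exactly $H_1$; this is what forces the simultaneous use of Theorem \ref{malnormal in free} and a further ping-pong refinement inside $H_1^\circ$.
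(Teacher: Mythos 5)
Your Stages 2 and 3 reproduce, in slightly different packaging, the paper's actual proof (Proposition \ref{step 1 of thm} followed by Step 2 of Theorem \ref{main malnormal thm}): normal core plus Proposition \ref{main prop}, then Lemma \ref{egh finite} and Lemma \ref{commen lemma} again, ping-pong inside a free group, Theorem \ref{malnormal in free}, and a centralizing trick to turn $\mathbb F\ltimes A$ into $H_1\times A$; the concluding chain $\mathcal I_G(H_1)\subset K$, $\mathcal I_K(H_1)\subset \mathbb F A$, malnormality inside $\mathbb F$ is sound. The genuine gap is in Stage 1. Proposition \ref{pingpong criteria} is a \emph{criterion}, not an existence theorem: it requires as input two infinite-order elements $g_1,g_2\in H$ with disjoint fixed-point sets such that no power of either is conjugate in $H$ into any $E_i$, and you never produce such elements. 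The paper's only existence result of this kind, Proposition \ref{pingpong prop}, is proved \emph{only for free groups}: its proof rests on Lemma \ref{lem-var-free-grp} (the fixed points of cyclically reduced words $wg$, $g\in[G,G]$, have closure with nonempty interior) and on the fact that the axis of a cyclically reduced word passes through $1$, which cuts the avoidance problem down to the finitely many cosets $gE_i$ with $d(1,gE_i)\le D$, where Corollary \ref{cor-baire category} applies. For a general nonelementary hyperbolic $H$ no analogue exists in the paper, and it is not a formality: Baire category gives a comeager subset of $\pa H$ missing $\bigcup_{g,i}\Lambda_H(gE_i)$, but you still need that subset to contain the fixed point of an infinite-order element, and density of loxodromic fixed points does not prevent them from all lying in a meager set. (Equivalently, you would need that some element of $H$ acts loxodromically on the coned-off graph of $H$ relative to the $E_i$'s, i.e.\ that $H$ is not covered by conjugates of the $E_i$ up to powers --- true, but requiring a proof the paper does not contain; this is precisely why the authors defer all ping-pong to free subgroups.) There is also a smaller slip: as written, your collection $\{E_i\}$ includes the intersections $H\cap H^g$ for $g\in Comm_G(H)\cap\mathcal I_G(H)$, which have \emph{finite} index in $H$, so no nontrivial $F\le H$ can meet all their conjugates trivially; Stage 1's asserted output is impossible until those are discarded.

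Fortunately, the gap sits in a step you do not actually need. The only thing Stage 1's avoidance buys is the containment $Comm_G(F)\subset Comm_G(H)$, and that containment is never used in Stages 2 and 3: everything downstream uses only that $K=Comm_G(F)$ is virtually free with $F$ of finite index (Proposition \ref{qc prop}(4)), together with the $E'$- and $E''$-avoidance, both of which you establish inside free groups where Proposition \ref{pingpong prop} genuinely applies. This is exactly the paper's own opening move: in Proposition \ref{step 1 of thm}, $F_1\le H$ is taken to be \emph{any} quasiconvex copy of $\mathbb F_2$ supplied by Proposition \ref{qc prop}(6), with no avoidance condition whatsoever. Replace your Stage 1 by that, and your argument closes up and becomes, in essence, the paper's proof.
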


\begin{proof}

{\bf Step 1.} First we prove an analogue of Proposition \ref{main prop}
to get hold of a free quasiconvex subgroup whose commensurator in $G$
has a rather simple structure.

\begin{prop}\label{step 1 of thm}
If $G$ is a nonelementary hyperbolic group and $H<G$ is a nonelementary quasiconvex
subgroup then there is subgroup $F$ of $H$ such that with the following properties.
\begin{enumerate}
\item $F\isom \mathbb F_n$ where $n\geq 2$ and $F$ is quasiconvex in $G$.
\item $F$ is a (finite index) normal subgroup of $Comm_G(F)$.
\item $Comm_G(F)=\mathbb F A \isom \mathbb F \ltimes A$
where $A$ is a finite normal subgroup of $Comm_G(F)$ and $\mathbb F$
is a free group containing $F$ as a finite index subgroup.
\end{enumerate}
\end{prop}
\begin{proof}
The proof idea is to construct a descending sequence of finitely generated
free groups $H\supset F_1 \supset F_2\cdots $ such that we have better control
on $\mathcal I_G(F_i)$ at each stage (see Section \ref{sec-gen on mal} for the notation).

As $H$ is nonelementary, by Proposition \ref{qc prop}(6), we can find a free subgroup
$F_1$ of $H$ isomorphic to $\mathbb F_2$ which is quasiconvex in $G$. Let $G_1= Comm_G(F_1)$.
Then $F_1$ has finite index in $G_1$ by Proposition \ref{qc prop}(4). Hence, we can
find a finite index subgroup  $F_2$ of $F_1$ which is normal in $G_1$.
We note that $Comm_G(F_1)=Comm_G(F_2)$ (Lemma \ref{lemma: commensurator}). Thus $F_2$ is a finitely generated free
quasiconvex of $G$ such that $F_2$ is normal in $Comm_G(F_2)$.

By Lemma \ref{commen lemma}, for all $g\in G\setminus Comm_G(F_2)$, $F_2\cap F^g_2$ has
infinite index in $F_2$. We recall that $F_2$ acts on the set
$\mathcal E'_G(F_2)=\{E<F_2: E=F_2\cap F^g_2,\, g\in \mathcal I_G(F_2)\setminus Comm_G(F_2)\}$
by conjugation (Lemma \ref{lem-properties of I(H)}), and by Lemma \ref{egh finite}, there are finitely many $F_2$-orbits of subgroups
in $\mathcal E'_G(F_2)$. Let $\mathcal E\subset \mathcal E'_G(F_2)$ be a finite subset containing
exactly one element from each of the $F_2$-orbits of infinite subgroups of the form $F_2\cap F^g_2$, $g\in G$.
As $Comm_G(F_2)=G_2$, say, is virtually free, by Proposition \ref{main prop}, we may
construct a subgroup $F_3<F_2$ with the following two properties:\smallskip

(1) $F_3\cap E^h=\{1\}$ for all $E\in \mathcal E$ and for all $h\in F_2$.
We note that this implies 
$Comm_G(F_3)=Comm_{G_2}(F_3)$. Indeed, let $x\notin G_2$ such that $[F_3:F_3\cap F^x_3]<\infty$. In particular, $x\in \mathcal I_G(F_2)\setminus Comm_{G}(F_2)$. Thus $F_3\cap (F_2\cap F^x_2)$ is infinite $-$ contradicting to the fact that $F_3\cap E^h$ is trivial.\smallskip

(2) There is a free subgroup $\mathbb F<Comm_G(F_3)$ and a finite normal
subgroup $A$ of $Comm_G(F_3)=\mathbb F A\isom \mathbb F \ltimes A$.

Finally, we define $F=\mathbb F\cap F_3$. Clearly then $F$ satisfies the properties
of the proposition.
\end{proof}

{\bf Step 2.} We continue to use the notations and definition from Proposition \ref{step 1 of thm} and
its proof; e.g. let $F$ still denote the subgroup of $G$ as constructed in Proposition
\ref{step 1 of thm}.  Next, we seek a subgroup of $F$ with the desired properties.
The proof runs, once again, along the lines of Proposition \ref{step 1 of thm}.

Let $G_3=Comm_G(F)$.
Once again, let $\{E_1, \ldots, E_l\}$ be a finite set of representatives
of infinite and infinite index subgroups of $F$ of the form $F\cap F^g$ up to conjugation
by elements of $F$, where
$g\in G\setminus Comm_G(F)$. Then by Proposition \ref{pingpong prop}, there is a
free subgroup $F_4$, say, in $F$ of rank two such that
$F_4\cap E^h_i$ is finite for all $h\in F$ and $1\leq i\leq l$.
We note the inclusions $F_4<F<\mathbb F<G_3$. If necessary, by passing to a further
subgroup we may assume that $A< Z_G(F_4)$. Now, using Theorem \ref{malnormal in free},
one can find a subgroup of $F_4$ which is isomorphic to $\mathbb F_2$ and which is
malnormal in $\mathbb F$.
Let $H_1$ be such a subgroup. Now it follows that $H_1$ satisfies the properties of
Theorem \ref{main malnormal thm}.

Indeed, $H_1A\isom H_1\times A$ is weakly malnormal in $G_3$, and so $Comm_{G_3}(H_1)\isom H_1\times A$. It follows, as explained in $(1)$ above (in the proof of Proposition \ref{step 1 of thm}), that $Comm_G(H_1)=Comm_{G_3}(H_1)=Comm_{G_3}(H_1A)\isom H_1\times A$. Finally, we have $(a)$ $H_1A\cap E_i$ is finite for all $g\in \mathcal I_G(F)\setminus G_3$, $(b)$ $H_1A$ has finite index in $Comm_{G_3}(H_1A)$, and $(c)$ $H_1A$ is weakly malnormal in $G_3$. It follows that $H_1A\isom H_1\times A$ is weakly malnormal in $G$.
\end{proof}

\begin{remark}\label{rmk-increasing rank}
 To increase the rank of $H_1$ in Theorem \ref{main malnormal thm}, we can take a malnormal subgroup $H_2$
 of any finite rank in $H_1$. (See \cite{wise-mal-1, wise-mal-2, das-mj-FS} for construction
 of malnormal subgroups of free groups.) Then $H_2\times A$ will be weakly malnormal in $G$ by
 Lemma \ref{malnormal transitive}.
\end{remark}

Following the proof \cite[Theorem $A$]{kapovich-nonqc} we immediately have the following application
of Theorem \ref{main malnormal thm}.
\begin{theorem}\label{main application}
Given any nonelementary hyperbolic group $G'$ there is another hyperbolic group
$G$ and an injective homomorphism $\phi:G'\map G$ such that
$\phi(G')$ is nonquasiconvex in $G$.
\end{theorem}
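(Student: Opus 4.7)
The plan is to follow the construction used in the proof of \cite[Theorem A]{kapovich-nonqc}, now enabled in full generality by Theorem \ref{main malnormal thm}. First I would apply Theorem \ref{main malnormal thm} to the nonelementary hyperbolic group $G_1$ to produce a weakly malnormal, quasiconvex subgroup $K = H_1 \times A$ of $G_1$, where $H_1 \cong \mathbb F_2$ and $A$ is finite. This $K$ will serve as the edge subgroup in a graph of groups whose fundamental group is the desired $G$.

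Next, following Kapovich's recipe, I would construct an auxiliary hyperbolic group $L$ containing $K$ as a quasiconvex subgroup, together with a stable letter (or amalgamating isomorphism) which creates controlled distortion of $H_1$. A natural candidate builds $L$ from $K$ by an HNN extension along two suitably chosen quasiconvex free subgroups of $H_1$, arranged so that conjugation by the stable letter sends arbitrarily long words in $H_1$ to short words in $L$. I would then form the amalgamated product (or HNN extension) $G = G_1 *_K L$ along $K$ and take $\phi \colon G_1 \hookrightarrow G$ to be the natural inclusion.

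The next step is to invoke the Bestvina--Feighn combination theorem \cite[Theorem 1.2]{BF-Adn} to conclude that $G$ is hyperbolic. This requires verifying the quasiconvexity of $K$ in both vertex groups (the $G_1$ side comes from Theorem \ref{main malnormal thm}, the $L$ side from the construction of $L$) and, crucially, the annulus flare/acylindricity condition along the Bass--Serre tree. The latter is exactly where the weak malnormality of $K$ in $G_1$ is used: finiteness of $K \cap gKg^{-1}$ for $g \notin K$ is the precise replacement, in the presence of torsion, for the malnormality hypothesis in Kapovich's original torsion-free argument; the finite factor $A$ causes no additional trouble since passing to a finite-index free subgroup $H_1$ of $K$ preserves both the acylindricity input and the orbit geometry. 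Granted hyperbolicity of $G$, nonquasiconvexity of $\phi(G_1)$ then follows because the sequence of elements in $H_1 \le G_1$ witnessing the distortion introduced by $L$ has $G$-word length growing strictly slower than its $G_1$-word length, contradicting that a quasiconvex subgroup of a hyperbolic group must be quasi-isometrically embedded (Proposition \ref{qc prop}(1)).

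The \emph{main obstacle} is verifying the Bestvina--Feighn annulus flare hypothesis in the possibly-torsion setting, but this reduces to exactly the situation handled in \cite{kapovich-nonqc} once Theorem \ref{main malnormal thm} is in hand, since weak malnormality plus quasiconvexity of $K$ give precisely the acylindricity input that Bestvina--Feighn requires. No other substantial change to Kapovich's argument is needed, which is why the authors describe this application as immediate.
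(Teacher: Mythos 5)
Your overall skeleton (apply Theorem \ref{main malnormal thm}, glue an auxiliary group $L$ to $G'$ along $K=H_1\times A$, invoke Bestvina--Feighn using weak malnormality for the acylindricity/flare input, then conclude via transitivity of quasiconvexity) matches the paper. However, the heart of the argument --- the construction of $L$ that actually distorts $H_1$ --- is where your proposal breaks down. You propose building $L$ as an HNN extension of $K$ along two \emph{proper} quasiconvex subgroups of $H_1$, ``arranged so that conjugation by the stable letter sends arbitrarily long words in $H_1$ to short words in $L$.'' These two requirements are in tension: if the associated subgroups are quasiconvex and the extension satisfies the annuli-flare condition you need to make $L$ hyperbolic, then the vertex group $K$ remains quasiconvex (undistorted) in $L$, and then the whole amalgam $G = G'*_K L$ would leave $G'$ quasiconvex in $G$ --- the opposite of what you want. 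No choice of proper quasiconvex associated subgroups produces the distortion; you have not identified any mechanism that does.

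The mechanism the paper (following Kapovich) uses is the mapping torus of a \emph{hyperbolic automorphism}: choose a hyperbolic automorphism $f$ of $\mathbb F_n$, set $\psi = f\times 1$ on $H=\mathbb F_n\times A$, and take $L = H*_{\psi}$, so that $G = G'*_{\psi} = G'*_{H}(H*_{\psi})$. Here the edge group is the \emph{entire} vertex group $H$ on the $L$-side; the flare condition on that side holds precisely because $f$ is hyperbolic (that is what hyperbolicity of an automorphism means), while weak malnormality handles the $G'$-side, so Bestvina--Feighn gives hyperbolicity of $G$. Distortion is then automatic: $H$ is an infinite, infinite-index \emph{normal} subgroup of $H*_{\psi}$, hence not quasiconvex there (cf. \cite{BFH-lam}, \cite{mitra-endlam}, or Proposition \ref{qc prop}(10), since its limit set is all of the boundary), so $H$ is not quasiconvex in $G$, and since $H$ is quasiconvex in $G'$, Proposition \ref{qc prop}(7) forces $\phi(G')$ to be nonquasiconvex in $G$. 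Note also that your insistence on keeping $H_1\cong\mathbb F_2$ is fatal to this route: $\mathbb F_2$ admits no hyperbolic automorphism (every automorphism of $\mathbb F_2$ preserves the conjugacy class of the basic commutator up to inversion, giving a periodic conjugacy class), which is exactly why the paper first invokes Remark \ref{rmk-increasing rank} to replace $H_1$ by a malnormal subgroup of rank $n\geq 3$ before choosing $f$.
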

{\em Sketch of proof.} By Theorem \ref{main malnormal thm} (and Remark \ref{rmk-increasing rank}) we can find
a weakly malnormal quasiconvex subgroup $H$ of $G'$ isomorphic to
$\mathbb F_n\times A$ where $n\geq 3$ and $A$ is finite. Now, one may
choose a hyperbolic automorphism $f:\mathbb F_n\map \mathbb F_n$ (see \cite{BF}, \cite{BFH-lam}, \cite{brinkmann-auto}). (Note that $\psi=f\times 1$ is a hyperbolic automorphism of $H$.) Then the HNN extension $G=G'*_{\psi}$ can also be written as the free product with amalgamation $G=G'*_{H}(H*_{\psi})$. Note that $H$ is weakly malnormal in $G'$. Then it follows from an easy case of
the Bestvina--Feighn combination theorem (\cite[Theorem $1.2$]{BF-Adn}) that $G$ is hyperbolic. For a different proof, see for instance, \cite[Theorem $3.5$ $(1)$]{HMS-landing}, \cite{mats-oguni}. Now we will see that $G'$ is not quasiconvex in $G$. Since $H$ is quasiconvex in $G'$, it is enough to show that $H$ is not quasiconvex in $G$. 
Note that $H$ in $H*_{\psi}$ is not quasiconvex (\cite{BFH-lam}, \cite{mitra-endlam}). It follows that $H$ is not quasiconvex in $G$.


\smallskip
{\bf Acknowledgement.} RH was supported by the Visiting
(Postdoctoral) Fellowship from TIFR Mumbai, India.

\bibliography{Ubib}
\bibliographystyle{amsalpha}
\end{document}